\newtheorem{theorem}{Theorem}[section]
\newtheorem{lemma}[theorem]{Lemma}
\newtheorem{proposition}[theorem]{Proposition}
\theoremstyle{definition}
\newtheorem{definition}[theorem]{Definition}
\newtheorem{remark}{Remark}
\newcommand{\be}{\begin{equation}}
\newcommand{\bel}[1]{\begin{equation}\label{#1}}
\newcommand{\ee}{\end{equation}}
\newcommand{\barr}{\begin{eqnarray}}
\newcommand{\earr}{\end{eqnarray}}
\newcommand{\bars}{\begin{eqnarray*}}
\newcommand{\ears}{\end{eqnarray*}}
\newtheorem{subn}{\name}
\newcommand{\bsn}[1]{\def\name{#1}\begin{subn}}
\newcommand{\esn}{\end{subn}}
\newtheorem{sub}{\name}[section]
\newcommand{\bs}{\begin{sub}}
\newcommand{\es}{\end{sub}}
\newcommand{\bth}[1]{\def\name{Theorem}
\begin{sub}\label{t:#1}}
\newcommand{\blemma}[1]{\def\name{Lemma}
\begin{sub}\label{l:#1}}
\newcommand{\bcor}[1]{\def\name{Corollary}
\begin{sub}\label{c:#1}}
\newcommand{\bdef}[1]{\def\name{Definition}
\begin{sub}\label{d:#1}}
\newcommand{\bprop}[1]{\def\name{Proposition}
\begin{sub}\label{p:#1}}
\newcommand{\BA}{\begin{array}}
\newcommand{\EA}{\end{array}}
\newcommand{\BAN}{\renewcommand{\arraystretch}{1.2}
\setlength{\arraycolsep}{2pt}\begin{array}}
\newcommand{\BAV}[2]{\renewcommand{\arraystretch}{#1}
\setlength{\arraycolsep}{#2}\begin{array}}
\newcommand{\BSA}{\begin{subarray}}
\newcommand{\ESA}{\end{subarray}}
\newcommand{\BAL}{\begin{aligned}}
\newcommand{\EAL}{\end{aligned}}
\newcommand{\BALG}{\begin{alignat}}
\newcommand{\EALG}{\end{alignat}}
\newcommand{\BALGN}{\begin{alignat*}}
\newcommand{\EALGN}{\end{alignat*}}
\def\angb<#1>{\langle #1 \rangle}
\def\Gw{\Omega}              
   \def\CO{{\mathcal O}}   
   \def\CB{{\mathcal B}}   \def\CC{{\mathcal C}}
\def\CD{{\mathcal D}}
\def\CW{{\mathcal W}} 
\def\N{\mathbb{N}}
\def\R{\mathbb{R}}
\def\O{\Omega}
\numberwithin{equation}{section}
\theoremstyle{definition}
\def\O{\Omega}
\newenvironment{formula}[1]{\begin{equation}\label{eq:#1}}
                       {\end{equation}\noindent}
\def\Fi#1{\begin{formula}{#1}}
\def\Ff{\end{formula}\noindent}
\begin{document}


 \title[]{Principal eigenvalue and positive solutions for Fractional $P-Q$ Laplace operator in quantum field theory}

\author{Thanh-Hieu Nguyen}
\address{Department of Mathematics and Applications, Saigon University, 273 An Duong Vuong st., Ward 3, Dist.5, Ho Chi Minh City, Viet Nam}
\email{thanhhieukhtn@gmail.com}

\author{Hoang-Hung Vo$^{*}$}
\address{Department of Mathematics and Applications, Saigon University, 273 An Duong Vuong st., Ward 3, Dist.5, Ho Chi Minh City, Viet Nam}
\email{vhhungkhtn@gmail.com}
\thanks{$^*$ Corresponding author}


\begin{abstract}
This article  deals with the existence and non-existence 
of  positive solutions for the eigenvalue problem driven by nonhomogeneous
fractional $p\& q$ Laplacian operator  with indefinite weights
$$\left(-\Delta_p\right)^{\alpha}u + \left(-\Delta_q\right)^{\beta}u \,= \lambda\left[a(x) \left|u\right|^{p-2}u + b(x) \left|u\right|^{q-2}u \right]\quad\quad\textrm{in $\O$},$$
where $\O$ is a smooth bounded domain in $\R^N$ extended by zero outside. When $\O=\R^N$ and $b\equiv0$, we further show that there exists a continuous family of the eigenvalue if  $1<q<p<q^*_\beta=\frac{Nq}{N-q\beta}$ and  $0\leq a\in L^{\left(\frac{q_{\beta}^*}{s}\right)'}\left(\R^N\right)\bigcap L^{\infty}\left(\R^N\right)$ with $s$ satisfies $\dfrac{p-t}{p_{\alpha}^*}+ \dfrac{p\left(1-t\right)}{s} =1$, for some $t\in \left(0, \sqrt{\dfrac{p-q}{p}}\right).$
Our approach replies strongly on variational analysis, in which the Mountain pass theorem plays the key role.  The main difficulty in this study is that how to establish the Palais-Smale conditions. In particular, in $\R^N$, due to the lack of spatial compactness and the embedding  $W^{\alpha, p}\left(\R^N\right) \hookrightarrow W^{\beta, q}\left(\R^N\right)$, we must employ the concentration-compactness principle of P.L. Lions \cite{PLL} to overcome the difficulty.
\end{abstract}

\subjclass[2010]{Primary 35B50, 47G20; secondary 35J60}

\date{\today}


\keywords{Fractional $(p,q)-$Laplacian, indefinite weight, variational
methods,  critical exponent,
mountain pass theorem, local minimizer.}

\maketitle

\tableofcontents

\section{\bf Introduction and main results}
In this paper, we investigate the existence and non-existence of positive solution of the following fractional 
$\left(p,q\right)$-Laplacians :
\begin{align}\label{1.1}
\left\{\begin{array}{ll}
\left(-\Delta_p\right)^{\alpha}u + \left(-\Delta_q\right)^{\beta}u \,= \lambda\left[a(x) \left|u\right|^{p-2}u + b(x) \left|u\right|^{q-2}u \right] &\mbox{in } \Omega, \smallskip\\
u=0&\mbox{in } \mathbb{R}^N \setminus \Omega,
\end{array}\right.
\end{align}
where $\lambda \in\R$,\, $0<\alpha<\beta<1<q\leq p<\infty$; 
 $(-\Delta)_r^{s}$ is the regional fractional p-Laplacian, that is
\begin{align*}
(-\Delta_r)^{s}u(x)\,&= C_{N,s,p}\lim_{\varepsilon\to 0}\displaystyle\int_{\mathbb{R}^N\setminus B_{\varepsilon}\left(x\right)}\dfrac{|u(x)-u(y)|^{r-2}(u(x)-u(y))}{|x-y|^{N+sr}}dy\\& =\, C_{N,s,p}\ {\bf P.V} \int_{\mathbb{R}^N} \dfrac{|u(x)-u(y)|^{p-2}(u(x)-u(y))}{|x-y|^{N+sr}}dy,\,\,\, \left(r = p,q; s = \alpha,\, \beta\right),
\end{align*}
and ${\bf P.V}$ denotes the commonly used abbreviations for
 ``in the main value sense'';  $\O \subset \R^N\ \left(N \geq 1\right)$ is a bounded domain in $\R^N$;  the weights $a(x), b(x)$ satisfy the following conditions:
\begin{align}\label{1.02}
\left\{\begin{array}{ll}
\noindent(i)\,\,\, a,\, b \in L^{\infty}\left(\O\right),\smallskip\\
(ii)\,\,\, \text{A\, Lebesgue\, measure\, sets\, of}\, \left\{x\in \O;\,\, a(x)>0\right\} \,,\, \left\{x\in \O;\,\, b(x)>0\right\}\,\text{are\, positive}.
\end{array}\right.
\end{align}
Throughout the paper, we will assume, without loss of generality, that $0<\alpha<\beta<1<q \leq p<\infty$; 
Before describing the main results, we first give some motivations.

Equation \eqref{1.1} is a natural extension of a following  $p\&q$ elliptic problem 
 \begin{align}\label{couple_laplacian}
 \Delta_p u+ \Delta_q u = f(x, u);\,\,\text{in}\,\,\O,\,\,\, u=0\,\,\,\text{on}\,\,\,\partial \O
 \end{align}
and quasilinear evolution reaction-diffusion equation
\begin{align}\label{reaction}
u_t = \text{div}\left[D(u)\nabla u\right]+ c\left(x, u\right)\,\,\, x\in \R^N,\,\, t>0,
\end{align}
where $D(u) = \left(\left|\nabla u\right|^{p-2}+ \left|\nabla u\right|^{q-2}\right)$. For a general term $D(u)$,
problem \eqref{reaction} has a wide range of applications in physics and related sciences such as biophysics \cite{PCF}, plasma physics \cite{HW}, and chemical reaction design \cite{RA, CI}, the operator  $-\Delta_p-\Delta$ has also appeared in quantum field theory, non linear optics, fluid mechanics, plasma
physics \cite{BAFP}. In this framework, the function $u$ describes a concentration, and the first term on the right-hand side of \eqref{reaction} corresponds to a diffusion with a diffusion coefficient $D(u)$; the term $c(x, u)$ stands for the reaction, related to sources and energy-loss processes. In particular, in chemical and biological applications, the reaction term
$c(x, u)$ has a polynomial form with respect to the concentration $u$ with variable coefficients (see \cite{GMF, LL, YY}). 

 In the past few years, a lot of attention has been given to the study of $(p,q)-$Laplace equations \cite{FMM, MP, S, YY}. In particular, the regularity of solutions of the $p\&q-$Laplacian problem has been studied by He and Li \cite{CG}. Using
the theory of regularity developed, the
authors showed that the weak solutions are locally $C^{
1, \alpha}$. Furthermore, P.
Baroni, G. Mingione and M. Colombo, (see \cite{BCM} and \cite{BCM1}), proved $C^{1, \alpha}$ regularity  when
the solutions are local minimizers for a class of integral functionals assuming that $1 < p \leq q$. An eigenvalue problem for the system of $-\Delta_p-\Delta$ equations 
was studied by Bence, Micheletti, Visetti \cite{BMV}, also  Colasuonno  and  Squassina \cite{cs} has studied the eigenvalue problem  of double phase variational
integrals and proved the existence of the eigenfunctions in Musielak–Orlicz space. The equation \eqref{couple_laplacian} with $c\left(x, u\right) = -p(x)\left|u\right|^{p-2}u - q(x)\left|u\right|^{q-2}u + \lambda g(x)\left|u\right|^{
\gamma-2}u$ for $1 < p < \gamma < q$ and $\gamma < p^*$
, where $p^* =\dfrac{Np}{N-p}$ if $p < n$, and $p^* = +\infty$, if $p \geq n$, was studied by Cherfils
and Il’yasov in \cite{CI} in bounded domain $\O \subset \R^N$. In \cite{ BZ1, BZ}, Benouhiba and Belyacine considered the equation \eqref{couple_laplacian} on the whole space $\R^N$ with $f(x, u) = \lambda g(x)\left|u\right|^{
p-2}u$ for $1 < q < p < q^*$ , and established existence of principal eigenvalue and a continuous family of generalized eigenvalues $\lambda$ under the key assumption 
 \[0\leq g\in L^{\left(\frac{q^*}{s}\right)'}\left(\R^N\right)\bigcap L^{\infty}\left(\R^N\right),\]
where $s$ is such that $\dfrac{p-t}{p^*}+ \dfrac{p\left(1-t\right)}{s} =1$, for some $t\in \left(0, \sqrt{\dfrac{p-q}{p}}\right) $.

In \cite{MT},  D. Motreanu, M. Tanaka studied the existence and non-existence of positive solutions for eigenvalue problem \eqref{couple_laplacian} with $f(x,u) = \lambda\left(m_p(x)\left|u\right|^{
p-2}u + m_q(x)\left|u\right|^{
q-2}u\right)$ for $1<q<p$ by using  the variational methods. Later, M. Tanaka \cite{Tanaka2} completely described the generalized eigenvalues $\lambda$ for which the
problem \eqref{couple_laplacian} with $f(x,u) = \lambda m_p(x)\left|u\right|^{
p-2}u$, has at least one positive solution. Moreover,  he also proved the existence of a sign-changing solution and the uniqueness of a positive solution for \eqref{couple_laplacian} in \cite{Tanaka1}. In the interesting paper \cite{bt}, when $m_p \equiv a,\, m_q\equiv b$ are constants,  Bobkov and Tanaka  provided a complete description of 2-dimensional sets in the $\left(a, b\right)$ plane, which becomes a threshold curve $\CC$ between the existence and non-existence of positive solutions for \eqref{couple_laplacian}.

Recently, the  research of fractional $(p, q)-$ Laplacian problems has drawn a significant  attention in the community of partial differential equations \cite{AT, AM,  AAI, BM, GKS} since it is not only a natural extension of the $(p,q)$-Laplacian equations but also presents many new phenomena described by nonlinear integral structures. In particular,  Ambrosio and Isernia \cite{AT}  considered fractional $p\& q$ laplacian problem with critical Sobolev-Hardy exponents 
\begin{align}\label{1.03}
\left\{\begin{array}{ll}
\left(-\Delta_p\right)^su + \left(-\Delta_q\right)^su \,= \dfrac{\left|u\right|^{p_s^*\left(\alpha\right)-2u}}{\left|x\right|^{\alpha}} +\lambda f(x,u) &\mbox{in } \Omega \smallskip\\
u=0&\mbox{in } \mathbb{R}^N \setminus \Omega\,,
\end{array}\right.
\end{align}
where $p_s^*\left(\alpha\right)= \dfrac{p\left(N-\alpha\right)}{N-sp}$ is the fractional critical Sobolev-Hardy exponents. They studied the existence of innitely many solutions for the problem \eqref{1.03} by using suitable concentration-compactness lemma and the Mountain Pass theorem.  Bhakta and Mukherjee  \cite{BM} studied the following problem in a bounded domain
\begin{align}\label{1.04}
\left\{\begin{array}{ll}
\left(-\Delta_p\right)^{s_1}u + \left(-\Delta_q\right)^{s_2}u \,= \left|u\right|^{p_{s_1}^*-2} +\theta V(x)\left|u\right|^{p-2}u+ \lambda f(x,u) &\mbox{in } \Omega, \smallskip\\
u=0&\mbox{in } \mathbb{R}^N \setminus \Omega\,,
\end{array}\right.
\end{align}
where $0 < s_2 < s_1 < 1 < r < q < p < \frac{n}{s_1}$
, and $V$ and $f$ are some appropriate functions.  They proved that there exist weak solutions of the problem \eqref{1.04} for some range of $\lambda, \theta$. Also, for $V(x) \equiv 1, \lambda = 0$ and assuming certain other conditions on $n, q, r$, they proved the existence of $cat_{\O}\left(\O\right)$ nonnegative solutions by using Lusternik-Schnirelmann category theory. 

Goel et al. \cite{GKS} studied the following fractional $(p, q)-$Laplacian problem
\begin{align}\label{1.05}
\left\{\begin{array}{ll}
\left(-\Delta_p\right)^{s_1}u + \beta\left(-\Delta_q\right)^{s_2}u \,= \lambda a(x)\left|u\right|^{\delta -2} +b(x)\left|u\right|^{r-2}u \, &\mbox{in } \Omega ,\smallskip\\
u=0&\mbox{in } \mathbb{R}^N \setminus \Omega\,,
\end{array}\right.
\end{align}
where $\O\subset \R^N$ is a bounded domain, $1 < \delta \leq q \leq p < r \leq p_{s_1}^* = \dfrac{np}{n - ps_1}
, 0 < s_2 <
s_1 < 1, n > ps_1, \lambda, \beta > 0$, and $a$ and $b$ are sign changing functions. Using Nehari manifold
method authors proved existence of at least two non-negative and non-trivial solutions in
the subcritical case for all $\beta > 0$ and for some range of $\lambda$. For the critical case under some
restriction on $\delta$, they obtained multiplicity results in some range of $\beta$ and $\lambda$. Furthermore,
they proved weak solutions of \eqref{1.05} are in the space $L^{\infty}\left(\O\right)\bigcap C^{0, \alpha}_{loc}\left (\O\right)$, for some $\alpha\in\left(0, 1\right)$,
when $2 \leq q \leq p < r < p_{s_1}^*$. Very recently, Alves, Ambrosio and Isernia \cite{AAI} studied the following class of
problems
\begin{align}\label{1.07}
\left(-\Delta_p\right)^{s}u + \left(-\Delta_q\right)^{s}u + V\left(\varepsilon x\right) \left(\left|u\right|^{p-2}u+ \left|u\right|^{q-2}u\right) = f(u) &\,\,\mbox{in}\,\, \R^N, 
\end{align}
with $s \in\left(0, 1\right)$ and $1 < p < q < N/s$. Under suitable assumptions on the potential and the nonlinearity, but without requiring the Ambrosetti–Rabinowitz condition, the authors  \cite{AAI} proved the existence of a ground state solution to \eqref{1.07} that concentrates around a minimum point of the potential $V$. Moreover, a multiplicity result is
established by using the Lyustenick–Schnirelmann category theory and the boundedness of solutions to \eqref{1.07}. On the other hand, Caponi and Pucci \cite{CP} used variational methods and Mountain pass theorem
to deal with existence of stationary Kirchhoff fractional p-Laplacian equations
\begin{align}\label{Kirchhoff}
 M([u]_{s,p}^p)(-\varDelta )^s_pu-\gamma \frac{|u|^{p-2}u}{|x|^{ps}}=\lambda w(x)|u|^{q-2}u+K(x)|u|^{p^*_s-2}u\quad \hbox {in }{\mathbb {R}}^N, 
 \end{align}
where $\gamma$ and $\lambda$ are real parameters, the exponent $q$ is such that $\theta p<q<p^*_s$ with $p_s^*=Np/(N-ps)$, and under suitable assumptions on the positive weights $w$ and $K$, they proved that there exists $\lambda^*>0$ such that for any $\lambda\geq\lambda^*$  then \eqref{Kirchhoff}  admits a non-trivial mountain pass solution while $\left\|K\right\|_{\infty}>0$, Theorem 1.1 in \cite{CP}. Furthermore, while $M(0) = 0, ps<N<2ps$ and $M$ satisfies some conditions, the authors \cite{CP} also proved the existence of solutions to the equation
\begin{align*}
 M([u]_{s,p}^p)(-\varDelta )^s_pu=\lambda w(x)|u|^{q-2}u+K(x)|u|^{p^*_s-2}u\quad \hbox {in }{\mathbb {R}}^N, \end{align*} 
 for any $\lambda\geq \lambda^*$ with $\lambda^*>0$, Theorem 1.2 \cite{CP}.

Motivated by the above papers, in this work we aim to  study the existence of nontrivial solutions
for a fractional $p\&q$ Laplacian problem involving two indefinite weigh \eqref{1.1}. As far as we know, the works  on fractional $p\&q$ Laplacian problems are  limited besides \cite{AT, AM,  AAI, BM, GKS}. In particular, the existence and non-existence of positive solution depending on the parameter $\lambda$  for the eigenvalue problem \eqref{1.1} still remained open. Our main contribution in this article is that we are able to characterize the existence and non-existence of problem \eqref{1.1}, which depends crucially on eigenspaces $\lambda_{1, p}^{a, \O}$ and $\lambda_{1,q}^{b, \O}$.  Recall that, by Pezzo and Quaas \cite{PQ}, $\lambda_{1, p}^{a, \O}$ is the positive, simple,  isolated  eigenvalue of problem
 
\begin{align}\label{1.9}
\left\{\begin{array}{ll}
\left(-\Delta_p\right)^{\alpha}\varphi_p^{a, \O}  \,= \lambda_{1, p} a(x) \left|\varphi_p^{a, \O}\right|^{p-2}\varphi_p^{a, \O}  \, &\mbox{in } \Omega, \smallskip\\
\varphi_p^{a, \O}=0&\mbox{on } \mathbb{R}^N \setminus \Omega,
\end{array}\right.
\end{align}
which is variationally characterized by
\begin{align*}
\lambda_{1,p}^{a, \O}: = \inf\left\{\dfrac{\displaystyle\int_{\O}\dfrac{\left|u(x) - u(y)\right|^p}{\left|x-y\right|^{N+\alpha p}}dxdy}{\displaystyle\int_{\O}a(x)\left|u\right|^pdx}: u\in W^{\alpha, p}_0\left(\O\right),\,\, \displaystyle\int_{\O}a(x)\left|u\right|^pdx>0\right\}.\end{align*}
This result was first considered by D. Motreanu, M. Tanaka \cite{MT} for the local case with $\alpha = \beta =1$.
One of the key tools in our analyze is the Rayleigh quotient, Lemma \ref{infimumeigenvalue} and mountain pass theorem, Lemma \ref{lem_2.4}. We prove that $\lambda_1^*$, which is defined by Rayleigh quotient, will not be attained. Otherwise, by using the Lagrange multiplier rule, we see that problem \eqref{1.1} admits a positive solution with $\lambda = \lambda_1^{*}\left(\O\right)$, but this contradicts Lemma \ref{lem.3.3}. On the other hand, using the mountain pass theorem to $\left(-\Delta_p\right)^{\alpha} +\left(-\Delta_q\right)^{\beta}$ operators, we show that existence of a positive solution to problem \eqref{1.1} with $\lambda> \lambda_1^{*}\left(\O\right)$, Theorem \ref{theorem_1.1}.  Our next result concerns the existence of principal eigenvalue and continuous family of positive eigenvalues  of the following problem
\begin{align}\label{generality}
(-\Delta)_p^{\alpha}u + (-\Delta)_q^{\beta}u \,= \lambda a(x) \left|u\right|^{p-2}u \, \,\,\,\,\mbox{in } \R^N,
\end{align}
where $0<\alpha<\beta<1<q < p< q_{\beta}^*$. To this aim, we first define the reflexive Banach space
\begin{align*} 
\CW = W^{\alpha, p}\left(\R^N\right)\bigcap W^{\beta, q}\left(\R^N\right),
\end{align*}
endowed with the norm $\left\|u\right\|_{\CW} = \left\|u\right\|_{W^{\alpha, p}\left(\R^N\right)}+ \left\|u\right\|_{W^{\beta, q}\left(\R^N\right)}$, where $\|\cdot\|_{W^{s, r}\left(\R^N\right)}$ is defined in \eqref{seminorm}. On the other hand,
the key assumptions imposed on the weight function is
 \begin{align}\label{1.11}
 0\leq a\in L^{\left(\frac{q_{\beta}^*}{s}\right)'}\left(\R^N\right)\bigcap L^{\infty}\left(\R^N\right),
 \end{align}
where $q_{\beta}^* = \dfrac{Nq}{N- q\beta}$ and $p_{\alpha}^* = \dfrac{Np}{N- p\alpha}$ are the fractional critical Sobolev exponents;  $s$ satisfies $\dfrac{p-t}{p_{\alpha}^*}+ \dfrac{p\left(1-t\right)}{s} =1$, for some $t\in \left(0, \sqrt{\dfrac{p-q}{p}}\right) $. This
condition plays a central role to prove the existence of a principal eigenvalue as well as a continuous family of positive eigenvalues.  These results are a considerable
extension of \cite{ BZ1, BZ}, which considered the case of $\alpha = \beta =1$.  We pointed out that the problem (\ref{generality}) is more interesting to consider in the whole space due to the lack of spatial compactness and the embedding  $W^{\alpha, p}\left(\R^N\right) \hookrightarrow W^{\beta, q}\left(\R^N\right)$ although our analysis can be applied to handle the problem in the bounded domain.

Our approach to obtain  the main results is replied on Mountain pass theorem, Lemma \ref{lem_2.4}. Another interesting way to prove the existence of solutions is Leray-Schauder alternative principle together with suitable truncation and
comparison techniques (see Papageorgiou, R$\breve{o}$dulescu and Repov$\check{s}$ \cite{PRR}). It is noted that the presence of two fractional $p,q$- Laplacian operators and the lack of compactness due to the critical exponent are one of the main difficulties in study the existence of positive solution and establishing the continuous spectrum.
More precisely, it is  not a direct application of Mountain-Pass Theorem to achieve the weak solutions
of \eqref{1.1} (resp. \eqref{generality}) due to lack of compactness for the Sobolev embedding. Thus, the boundedness of the Palais-Smale sequence is not easy to obtain. Furthermore,  the convergence of $\, \dfrac{\left|u_j(x)-u_j(y)\right|^{p-2}\left(u_j(x)-u_j(y)\right)}{\left|x-y\right|^{\frac{N+\alpha p}{p'}}}$ in $L^{p^{\prime}}\left(\R^{2N}\right)$ within $p^{\prime} = \dfrac{p}{p-1}$ does not ensure for existence of a Palais-Smale sequence. Thus, to overcome these difficulties, we use
the concentration-compactness principle due to P.L Lions \cite{PLL}  for tight sequences in $W^{\alpha,p}_0\left(\O\right)$, which was considered by C. O. Alves et.al. \cite{AAI}  and we show that the weak limits of Palais-Smale sequences of $J$ (resp. $I$) are also the weak solutions to \eqref{1.1} (resp. \eqref{generality}). 

Our main results read as follows :
\begin{theorem}\label{theorem_1.1}
Assume \eqref{1.02}  and $a,b\not\equiv 0$. Then the following conclusions hold.

i) If there exists $\lambda_1^*\left(\O\right)>0$ such that $0 < \lambda < \lambda_1^*\left(\O\right)$, then the problem \eqref{1.1} has no non-trivial solutions.

ii) If one of the following conditions hold:
\begin{enumerate}
\item $\lambda_{1, p}^{a, \O} \neq \lambda_{1, q}^{b, \O}$,
\item  There does not exist $k>0$ such that    $\varphi_p^{a, \O} = k\varphi_q^{b, \O}$ almost everywhere, then the problem \eqref{1.1}, with $\lambda = \lambda_1^*\left(\O\right)$ has no non-trivial solutions.
\end{enumerate}

iii) If $\lambda > \lambda_1^*\left(\O\right)$, and $\lambda_{1, p}^{a, \O} \neq \lambda_{1, q}^{b, \O}$ then the problem \eqref{1.1} has at least one positive solution.
\end{theorem}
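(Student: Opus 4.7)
The approach I would take is variational: introduce the $C^1$ energy functional
$$J_\lambda(u) = \frac{1}{p}\iint\frac{|u(x)-u(y)|^p}{|x-y|^{N+\alpha p}}\,dx\,dy + \frac{1}{q}\iint\frac{|u(x)-u(y)|^q}{|x-y|^{N+\beta q}}\,dx\,dy - \frac{\lambda}{p}\int_\Omega a(x)|u|^p\,dx - \frac{\lambda}{q}\int_\Omega b(x)|u|^q\,dx,$$
defined on $W^{\alpha,p}_0(\Omega)\cap W^{\beta,q}_0(\Omega)$, whose critical points coincide with the weak solutions of \eqref{1.1}; the value $\lambda_1^*(\Omega)$ is identified with the Rayleigh-type infimum supplied by Lemma~\ref{infimumeigenvalue}. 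For part (i), if $u\not\equiv 0$ is a weak solution for some $\lambda>0$, testing against $u$ itself and feeding the equality into the variational definition of $\lambda_1^*(\Omega)$ immediately yields $\lambda\geq\lambda_1^*(\Omega)$; the hypothesis \eqref{1.02} that $\{a>0\}$ and $\{b>0\}$ have positive measure ensures the Rayleigh denominator stays strictly positive along $u$.

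For part (ii), I would argue by contradiction: a non-trivial solution $u$ at $\lambda=\lambda_1^*(\Omega)$ must realise the Rayleigh infimum, so the Lagrange multiplier rule forces $u$ to solve \emph{simultaneously} the $p$-eigenvalue problem \eqref{1.9} with weight $a$ and its $q$-analogue with weight $b$. The simplicity of the principal eigenvalues, established by Pezzo--Quaas \cite{PQ}, then makes $u$ proportional to both $\varphi_p^{a,\Omega}$ and $\varphi_q^{b,\Omega}$; comparing the two eigenvalue equations yields both $\lambda_{1,p}^{a,\Omega}=\lambda_{1,q}^{b,\Omega}$ and $\varphi_p^{a,\Omega}=k\varphi_q^{b,\Omega}$ for some $k>0$, contradicting hypothesis (1) or (2). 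This rigidity step is Lemma~\ref{lem.3.3}.

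For part (iii), with $\lambda>\lambda_1^*(\Omega)$ and $\lambda_{1,p}^{a,\Omega}\neq\lambda_{1,q}^{b,\Omega}$, I would apply the Mountain Pass Theorem (Lemma~\ref{lem_2.4}) to the truncation $J^+_\lambda$ obtained by replacing $|u|^{r-2}u$ with $(u^+)^{r-1}$ ($r=p,q$) so that critical points are automatically non-negative. The mountain-pass geometry is verified in two steps: on a sufficiently small sphere, $\lambda_1^*(\Omega)$ gives $J^+_\lambda(u)\geq c>0$; and the strict inequality $\lambda_{1,p}^{a,\Omega}\neq\lambda_{1,q}^{b,\Omega}$ allows one to take a rescaling $t\varphi$ of the principal eigenfunction associated with whichever of the two eigenvalues is smaller and exploit the competing homogeneities $t^p$ versus $t^q$ to produce an admissible direction $e$ with $J^+_\lambda(e)<0$. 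Palais--Smale sequences are bounded from the coercivity of the two Gagliardo seminorms against the sub-$p$ and sub-$q$ growth of the right-hand side, and their strong convergence is obtained by combining the $(S_+)$-property of the fractional $p$- and $q$-Laplacian with a Br\'ezis--Lieb type splitting. The critical point found is non-trivial (positive mountain-pass level), non-negative by construction, and strictly positive through the strong maximum principle of \cite{PQ}.

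The main obstacle I anticipate is the verification of the Palais--Smale condition: the simultaneous presence of two nonhomogeneous fractional operators of different orders $\alpha<\beta$ and different growths $q\leq p$ prevents a clean splitting into coercive pieces, so the boundedness of Palais--Smale sequences and the a.e.\ convergence of the difference quotients must be treated component-wise and then reconciled. A secondary delicate point is the construction of the mountain-pass direction in part (iii), where the assumption $\lambda_{1,p}^{a,\Omega}\neq\lambda_{1,q}^{b,\Omega}$ is used essentially to balance the $p$- and $q$-homogeneous parts of $J_\lambda$; without it, cancellation between the two components would block the descent step.
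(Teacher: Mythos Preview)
Your plan tracks the paper's argument in parts (i) and (ii), but there is a genuine gap in part (iii). You propose to run the Mountain Pass Theorem throughout, asserting that ``on a sufficiently small sphere, $\lambda_1^*(\Omega)$ gives $J^+_\lambda(u)\geq c>0$''. This fails precisely because $\lambda > \lambda_1^*(\Omega)$. If, say, $\lambda_1^*(\Omega) = \lambda_{1,q}^{b,\Omega} < \lambda_{1,p}^{a,\Omega}$, then along the ray $u = t\varphi_q^{b,\Omega}$ the $q$-homogeneous piece dominates as $t \to 0^+$, and
\[
J^+_\lambda\bigl(t\varphi_q^{b,\Omega}\bigr) = \frac{t^q}{q}\bigl(\lambda_{1,q}^{b,\Omega} - \lambda\bigr)\int_\Omega b\,(\varphi_q^{b,\Omega})^q\,dx + O(t^p) < 0
\]
for all small $t>0$. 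Thus $0$ is \emph{not} a local minimum and no small-sphere barrier exists. The paper addresses this by splitting the range of $\lambda$ into two regimes. When $\lambda_{1,q}^{b,\Omega} < \lambda < \lambda_{1,p}^{a,\Omega}$ the functional $J$ is coercive (Lemma~\ref{lem.3.3}) and sequentially weakly lower semicontinuous, so a nontrivial critical point is produced by \emph{direct global minimisation}; the negativity displayed above is exactly what guarantees $\inf J < 0 = J(0)$. Only in the range $\lambda > \lambda_{1,p}^{a,\Omega}$ is the Mountain Pass Theorem invoked, and there the small-sphere lower bound (Lemma~\ref{lem.3.4}) comes from a balance of the $p$- and $q$-seminorm terms at a suitably chosen radius, not from the constant $\lambda_1^*(\Omega)$.

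A smaller point concerns part (i): testing the equation against $u$ yields the \emph{unweighted} identity
\[
\left\|u\right\|_{\CW^{\alpha,p}_0(\Omega)}^p + \left\|u\right\|_{\CW^{\beta,q}_0(\Omega)}^q = \lambda\Bigl(\int_\Omega a|u|^p\,dx + \int_\Omega b|u|^q\,dx\Bigr),
\]
whereas the Rayleigh quotient $\Phi/\Psi$ defining $\lambda_1^*(\Omega)$ carries the weights $1/p$ and $1/q$, so the inequality $\lambda\geq\lambda_1^*(\Omega)$ is not ``immediate''. The paper closes this with a rescaling: for $s^{p-q}=p/q$ one checks that $\Phi(su)/\Psi(su)=\lambda$, which then feeds directly into the variational definition. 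Alternatively, one may bypass the rescaling by applying the identification $\lambda_1^*(\Omega)=\min\{\lambda_{1,p}^{a,\Omega},\lambda_{1,q}^{b,\Omega}\}$ from Lemma~\ref{infimumeigenvalue} termwise to the unweighted identity.
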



Under the key assumption \eqref{1.11}, we further prove the existence of the eigenvalue for the problem \eqref{generality} and there exists a continuous family of   eigenvalues. More precisely, we obtain
\begin{theorem}\label{theorem_1.3}
Assume  $1 < q < p < q^*_{\beta}$ and \eqref{1.11}. Then problem \eqref{generality} admits a principal eigenvalue $\lambda_1\left(\alpha, \beta, p, q\right)\in \R$ which is simple and the associated with a positive eigenfunction. Moreover, for any $\lambda> \lambda_1\left(\alpha, \beta, p, q\right)$ is an eigenvalue of problem \eqref{generality}, i.e there exists a continuous set of positive eigenvalues.
\end{theorem}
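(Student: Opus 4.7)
The plan is to obtain $\lambda_1(\alpha,\beta,p,q)$ by direct minimization of an energy functional on a constraint manifold, then to recover every $\lambda>\lambda_1$ as an eigenvalue via the Mountain Pass Theorem, adapting the strategy of Benouhiba--Belyacine \cite{BZ1, BZ} to the fractional setting. The unifying difficulty throughout will be the loss of compactness on $\R^N$, which is compensated by the integrability hypothesis \eqref{1.11}.

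First I would introduce the $C^1$ energy
\begin{equation*}
F(u):=\frac{1}{p}\iint_{\R^{2N}}\frac{|u(x)-u(y)|^p}{|x-y|^{N+\alpha p}}\,dxdy+\frac{1}{q}\iint_{\R^{2N}}\frac{|u(x)-u(y)|^q}{|x-y|^{N+\beta q}}\,dxdy
\end{equation*}
on $\CW$, the constraint set $\CM:=\{u\in\CW:\int_{\R^N}a(x)|u|^p\,dx=1\}$, and define $\lambda_1(\alpha,\beta,p,q):=p\inf_{\CM}F$. The exponent condition in \eqref{1.11}, combined with H\"older, the embeddings $W^{\alpha,p}\hookrightarrow L^{p_\alpha^*}$, $W^{\beta,q}\hookrightarrow L^{q_\beta^*}$, and an interpolation between $L^{p_\alpha^*}$ and $L^s$, yields an estimate of the form $\int_{\R^N}a|u|^p\le C\|a\|_{L^{(q_\beta^*/s)'}}[u]_{\alpha,p}^{pt}\,[u]_{\beta,q}^{p(1-t)\theta}$ in which the restriction $t\in(0,\sqrt{(p-q)/p})$ forces the right-hand side to be strictly subcritical relative to the leading order of $F$; this makes $F$ coercive on $\CM$, and $a\not\equiv 0$ guarantees $\CM\ne\emptyset$.

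Second, I would take a minimizing sequence $\{u_n\}\subset\CM$; by coercivity it is bounded in $\CW$, so up to a subsequence $u_n\rightharpoonup u_0$ in $\CW$ and a.e.\ in $\R^N$, and lower semicontinuity of the two Gagliardo seminorms yields $F(u_0)\le\liminf F(u_n)$. The main obstacle is that weak convergence does not a priori preserve the constraint on the whole of $\R^N$; to resolve this I would invoke the fractional concentration--compactness principle of Lions \cite{PLL}, in the form used by Alves--Ambrosio--Isernia \cite{AAI}: the integrability $a\in L^{(q_\beta^*/s)'}(\R^N)$ makes $\int_{|x|>R}a|u_n|^p$ uniformly small in $n$, thereby excluding vanishing and dichotomy and producing $u_n\to u_0$ in $L^p(a\,dx)$. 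The Lagrange multiplier rule then identifies $u_0$ as a weak solution of \eqref{generality} with $\lambda=\lambda_1$. Replacing $u_0$ by $|u_0|$ (which alters neither $F$ nor the constraint) and invoking the strong maximum principle of Pezzo--Quaas \cite{PQ} upgrades it to a strictly positive eigenfunction $\varphi_1$; simplicity of $\lambda_1$ then follows from a Picone/D\'iaz--Sa\'a-type convexity argument comparing any two positive eigenfunctions.

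Finally, for each $\lambda>\lambda_1$ I would apply the Mountain Pass Theorem (Lemma \ref{lem_2.4}) to
\begin{equation*}
I_\lambda(u):=F(u)-\frac{\lambda}{p}\int_{\R^N}a(x)|u|^p\,dx.
\end{equation*}
Testing along $t\varphi_1$ with $t\to\infty$ and exploiting $q<p$ produces a point where $I_\lambda<0$, while the subcritical estimate from the first step provides a strict local minimum at the origin, so the mountain pass geometry holds. The hard part will be verifying the Palais--Smale condition at the minimax level: any $(PS)_c$-sequence is first shown to be bounded by combining $I_\lambda(u_n)$ with $\frac{1}{p}\langle I_\lambda'(u_n),u_n\rangle$ and exploiting $p-q>0$, and passage to the limit in the weighted $L^p(a\,dx)$ term is handled by exactly the same concentration--compactness analysis, the integrability hypothesis \eqref{1.11} being precisely what rules out any nonzero defect measure at infinity. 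The resulting nontrivial critical point, upgraded once more by the maximum principle, is a positive eigenfunction for $\lambda$, establishing the continuous family of eigenvalues on $(\lambda_1,+\infty)$.
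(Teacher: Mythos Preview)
Your overall two–part strategy—direct minimization to produce the principal eigenpair, then the Mountain Pass Theorem for every $\lambda>\lambda_1$—is exactly the route the paper takes in Propositions~\ref{proposition_4.1} and~\ref{proposition_4.2} (via Lemmas~\ref{lem_4.3}--\ref{lem_4.4}). Your invocation of Lions' concentration–compactness to pass to the limit in $\int_{\R^N}a|u_n|^p$ is also what the paper intends, and your derivation of the $(PS)_c$ bound from $I_\lambda(u_n)-\tfrac1p\langle I_\lambda'(u_n),u_n\rangle$ matches Lemma~\ref{lem_4.3}.

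There is, however, one genuine slip in your formulation of the principal eigenvalue. Because the operator mixes a $p$–homogeneous and a $q$–homogeneous term, constrained minimization and the Rayleigh quotient are \emph{not} interchangeable here. If $u_0$ minimizes $F$ on $\CM=\{\int a|u|^p=1\}$, the Lagrange multiplier rule gives
\[
(-\Delta_p)^\alpha u_0+(-\Delta_q)^\beta u_0=\mu\,a|u_0|^{p-2}u_0,
\qquad
\mu=[u_0]_{\alpha,p}^p+[u_0]_{\beta,q}^q,
\]
whereas your declared value is $p\inf_{\CM}F=pF(u_0)=[u_0]_{\alpha,p}^p+\tfrac{p}{q}[u_0]_{\beta,q}^q$, which is strictly larger. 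So the sentence ``the Lagrange multiplier rule then identifies $u_0$ as a weak solution of \eqref{generality} with $\lambda=\lambda_1$'' is false as written. The paper circumvents this by working with the weighted quotient
\[
\Phi(u)=\frac{[u]_{\alpha,p}^p+\tfrac{p}{q}[u]_{\beta,q}^q}{\int_{\R^N}a|u|^p},
\]
whose numerator is chosen precisely so that the critical–point equation of $\Phi$ is the eigenvalue equation with eigenvalue $\Phi(\bar u)$; see the computation following \eqref{3.32}. You should either adopt this weighted quotient, or keep your constrained setup but redefine $\lambda_1$ to be the Lagrange multiplier $\mu$ rather than $p\inf_{\CM}F$. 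Once this identification is corrected, the rest of your outline (positivity via the maximum principle, simplicity via a Picone/convexity argument, and the mountain–pass step for $\lambda>\lambda_1$) aligns with the paper.
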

We remark that Theorem \ref{theorem_1.3} confirms that the spectrum eigenvalue problem \eqref{generality} is continuous. This is in contrast with the result of Pezzo and Quass \cite{PQ}, who showed that the spectrum of the fractional $p-$Laplacian in $\R^N$, 
\begin{align}\label{PS1}
\left(-\Delta_p\right)^s u(x) = \lambda g(x)\left|u(x)\right|^{p-2}u(x)\,\,\,\,\text{in}\,\,\R^N,
\end{align}
 is discrete, i.e. there exists unbounded sequence of eigenvalues. Indeed, from Theorem 1.1  \cite{PQ},  under hypothesis $g\in L^{\infty}\left(\R^N\right)\bigcap L^{N/sp}\left(\R^N\right),\, N/sp>1$, the authors proved that  there exists a sequence of eigenpairs $\left\{\left(u_n, \lambda_n\left(g\right)\right)\right\}_{n\in\N}$ satisfying $\int\limits_{\R^N}g(x)\left|u_n(x)\right|^pdx=1$ and $0<\lambda_1(g)<\lambda_2(g)\leq\cdots\leq\lambda_n(g)\to+\infty$ if $g_+= \max\left\{0, g(x)\right\}\neq 0$. On the other hand, if $g_{\pm}\neq 0$ with $g_-=-\max\left\{0, -g(x)\right\}$ then  there exist  two sequences of eigenpairs $\left\{\left(u_n, \lambda^+_n\left(g\right)\right)\right\}_{n\in\N}$, $\left\{\left(v_n, \lambda^-_n\left(g\right)\right)\right\}_{n\in\N}$ such that $\int\limits_{\R^N}g(x)\left|u_n(x)\right|^pdx=1$, $\int\limits_{\R^N}g(x)\left|v_n(x)\right|^pdx=-1$ and 
 \begin{align*}
 &0<\lambda^{+}_1(g)<\lambda^{+}_2(g)\leq\cdots\leq\lambda^{+}_n(g)\to+\infty,\\& 0>\lambda^{-}_1(g)>\lambda^{-}_2(g)\geq\cdots\geq\lambda^{-}_n(g)\to-\infty.
 \end{align*}
Furthermore,  Theorem 1.3 \cite{PQ},  further stated that there is no positive principal eigenvalue for (\ref{PS1}) if $N/ps<1,\, g\in L^{\infty}\left(\R^N\right)$ and $ \int\limits_{\R^N}g(x)dx >0$.

\textit{The rest of this paper is organized as follows : In Section \ref{Preliminaries}, we recall some facts about the fractional Sobolev space that are used in the paper. Section \ref{section_3} is devoted to the study existence and non-existence solution
of the problem \eqref{1.1}. In Section \ref{section_4}, we establish the existence of principal eigenvalue for the problem \eqref{generality} and continuous family of eigenvalue result.}

\section{\bf Functional Setting and background}\label{Preliminaries}
Let $\O$  be a smooth (at least Lipschitz) domain of $\R^N$, and consider the Sobolev space of fractional order $s\in \left(0, 1\right)$ and exponent $r > 1$,
\begin{align*}
 W^{s,r}(\O):= \left\{u \in L^r(\O):\, \displaystyle\int_{\O}\displaystyle\int_{\O} \dfrac{|u(x)-u(y)|^r}{|x-y|^{N+sr}}dxdy \in L^{r}\left(\O\times\O\right)\right\},
 \end{align*}
which is endowed by the norm
\[\left\|u\right\|^r_{W^{s,r}\left(\O\right)} : = \left\|u\right\|_{L^r\left(\O\right)}^r + \displaystyle\int_{\O}\displaystyle\int_{\O} \dfrac{|u(x)-u(y)|^{r}}{|x-y|^{N+sr}}dxdy, \]
is a separable Banach space. Moreover, if $r \in (1,\infty )$ then $W^{s,r}\left(\O\right)$ is reflexive.

Let us consider  $W^{s, r}_0\left(\O\right)$ that is defined as follows:
\begin{align*}
W^{s, r}_0\left(\O\right): = \left\{ u \in L^r\left(\R^N\right): u = 0\,\,\text{in}\,\, \R^N\setminus \O\,\,\text{and}\,\, \left[u\right]_{s, r}< \infty\right\},\,\,\, 0<s<1<r,
\end{align*}
where
\begin{align}\label{norm}
\left[u\right]_{s,r}: = \left(\displaystyle\iint\limits_{\R^{2N}} \dfrac{|u(x)-u(y)|^{r}}{|x-y|^{N+sr}}dxdy\right)^{1/r}.
\end{align}
In fact, the Gagliardo semi-norm $\left[\cdot\right]_{s,r}$ is a norm in $W^{s, r}_0\left(\O\right)$ and that this
Banach space is uniformly convex. Actually,
\begin{align*}
W^{s, r}_0\left(\O\right)= \overline{C_c^{\infty}\left(\O\right)}^{\left[\cdot\right]_{s, r}}.
\end{align*}
When $\O = \R^N$, we simply write $\left|u\right|_r$. We deﬁne $\CD^{s,r}\left(\R^N\right)$ as the closure of $C_c^{\infty}\left(\R^N\right)$ with respect to
\begin{align}
\left[u\right]_{s,r}^r: = \displaystyle\iint\limits_{\R^{2N}} \dfrac{|u(x)-u(y)|^{r}}{|x-y|^{N+sr}}dxdy.
\end{align}
Let us deﬁne $W^{s,r}\left(\R^N\right)$ as the set of functions $u\in L^p\left(\R^N\right)$ such that $\left[u\right]_{s,r} < \infty$, endowed with the norm
\begin{align}\label{seminorm}
\left\|u\right\|^r_{W^{s,r}\left(\R^N\right)} = \left[u\right]_{s, r}^r + \left|u\right|_r^r.
\end{align}
\begin{remark}\label{remark_1}
Let $s \in (0, 1)$ and $N > sp$. Then there exists a constant $S_* > 0$ such that for
any $u \in \CD^{s,p}\left(\R^N\right)$
\begin{align}
\left|u\right|_{p_s^*}^p\leq S_*^{-1}\left[u\right]_{s, p}^p.
\end{align}
Moreover, $W^{s,p}\left(\R^N\right)$ is continuously embedded in $L^{t}\left(\R^N\right)$ for any $t \in \left[p, p_s^*\right]$ and compactly in $L^{t}\left(\CB_R(0)\right)$, for all $R > 0$ and for any $t\in \left[1, p_s^*\right)$.
\end{remark}

On the other hand, it is well-known that $W^{s,r}_{0}(\Gw)\hookrightarrow\hookrightarrow L^{\gamma}$ for any $\gamma \in \left(1, r^*_s\right)$. On the other hand $W^{s,r}_{0}(\Gw)\hookrightarrow L^{r^*_s}$,  if $rs \neq N$
and $W^{s,r}_{0}(\Gw)\hookrightarrow L^{\gamma}$ for all $\gamma \in (1, \infty)$ if $rs = N$ (see for example \cite{EG}). Moreover 
\begin{align}\label{2.02}
W^{s,r}_{0}(\Gw)\hookrightarrow\hookrightarrow C_0\left(\overline{\O}\right)\,\,\text{if}\,\,\, r>N/s.
\end{align}

(The notation $A\hookrightarrow\hookrightarrow B$ means that the continuous embedding $A\hookrightarrow B$ is compact.)
The compactness in \eqref{2.02} is consequence of the following Morrey’s type inequality (see \cite{EG})
\begin{align}\label{2.03}
\sup\limits_{\left(x,y\right)\neq\left(0,0\right)}\dfrac{\left|u(x)-u(y)\right|}{\left|x-y\right|^{s-\frac{N}{r}}}\leq C\left[u\right]_{s,r},\,\,\,\forall u\in W^{s,r}_0\left(\O\right),
\end{align}
which holds whenever $r > N/s$. If $r$ is sufficiently large, the positive constant $C$ in \eqref{2.03}  can be chosen uniform with respect to $r$.
We shall also include the p-Laplacian case and the solution space in this case is the usual
Sobolev space $W^{s,p}_{0}(\Gw)$ endowed with the norm $\|u\| = \left(\int_{\Gw}\left|\nabla u\right|^p\right)^{\frac{1}{q}}$. The notation
$W^{s,p}_{0}(\Gw),\,\,(s \in (0, 1])$ will be used to denote both the fractional Sobolev space defined above (when $0 < s < 1)$ and the usual Sobolev space $W^{s,p}_{0}(\Gw)$) (when $s = 1$). Note
that $p^*$ coincides with $p_1^*$.

Let $\left(-\Delta_r\right)^s$ be the $s-$fractional $r-$Laplacian, the operator acting from $W^{s,r}_{0}(\Gw)$  into its topological
dual, defined by
\begin{align*}
\left \langle \left(-\Delta_r\right)^su, \varphi \right \rangle_{s,r}= \displaystyle\int_{\R^N}\displaystyle\int_{\R^{N}}\left|u(x)- u(y)\right|^{r-2}\dfrac{\left(u(x)-u(y)\right)\left(\varphi(x)-\varphi(y)\right)}{\left|x-y\right|^{N+sr}}dxdy,\,\,\, \forall u,v \in W^{s,r}_{0}(\Gw).
\end{align*}
We recall that $\left(-\Delta_r\right)^su$ is the Gâteaux derivative at a function $u\in  W^{s,r}_{0}(\Gw)$  of the Fréchet differentiable functional $v \mapsto r^{-1}\left[v\right]_{s,r}^r$.
\begin{remark}
We take into account \eqref{2.02} and the following known facts:

$\left(j_1\right)$\,\,$W^{s,p}_{0}(\Gw)\not\hookrightarrow W^{s,q}_{0}(\Gw)$ for any $0 < s < 1 \leq q < p \leq \infty$ (see \cite{PW}, Theorem 1.1),

$\left(j_2\right)$\,\,$W^{\alpha,p}_{0}(\Gw)\hookrightarrow W^{\beta,q}_{0}(\Gw)$, whenever $0 < \alpha < \beta < 1 \leq p < q < \infty$ (see \cite{BL}, Lemma 2.6). 
\end{remark}
Thus, we assume that $\alpha, \beta, p$ and $q$ satisfy one of the following conditions:
\begin{align}\label{2.04}
0 < \alpha<\beta < 1 \,\,\,\text{and}\,\,\ \dfrac{N}{\alpha} < p < q,
\end{align}
or
\begin{align}\label{2.05}
0 <\beta< \alpha < 1 \,\,\,\text{and}\,\,\ \dfrac{N}{\beta} < q < p.
\end{align}
The assumption \eqref{2.04} provides the chain of embeddings
$W^{\beta,q}_{0}(\Gw) \hookrightarrow W^{\alpha,p}_{0}(\Gw)\hookrightarrow\hookrightarrow C_0\left(\overline{\O}\right)$ whereas \eqref{2.05} yields $ W^{\alpha,p}_{0}(\Gw)\hookrightarrow W^{\beta,q}_{0}(\Gw) \hookrightarrow\hookrightarrow C_0\left(\overline{\O}\right)$. 

 For $p > 1$ and $\alpha\in \left(0,1\right)$, we denote $\CO= \R^{2N}\setminus\left(C\O\right)^2$, where $C\O = \R^N\setminus\O$ and 
\begin{align*}
\CW^{\alpha, p} = \left\{u:\R^N\to \R \,\,\text{is\, measurable}\,\, u\in L^p\left(\O\right),\, \left(\displaystyle\int_{\CO} \dfrac{|u(x)-u(y)|^{p}}{|x-y|^{N+\alpha p}}dxdy\right)^{1/p} \right\},
\end{align*}
endowed with the norm as
\begin{align*}
\left\|u\right\|_{\CW^{\alpha, p}} = \left\|u\right\|_{L^p\left(\O\right)}+ \left(\displaystyle\int_{\CO} \dfrac{|u(x)-u(y)|^{p}}{|x-y|^{N+\alpha p}}dxdy\right)^{1/p}.
\end{align*}
Let $\CW^{\alpha, p}_0\left(\O\right)$  denote the closure of $C_c^{\infty}\left(\O\right)$ in $\CW^{\alpha, p}$, namely
\begin{align*}
\CW^{\alpha, p}_0\left(\O\right):= \left\{u \in \CW^{\alpha, p}: u = 0.\, \text{a.e. \, in}\,\, \R^N\setminus \O \right\} ,
\end{align*}
which is a uniformly convex Banach space endowed with the norm  (equivalent to $\left\|\cdot\right\|_{\CW^{\alpha, p}_0\left(\O\right)}$ ), namely
\begin{align*}
\left\|u\right\|_{\CW^{\alpha, p}_0\left(\O\right)} =\left[u\right]_{s,p, \CO}= \left(\displaystyle\int_{\CO} \dfrac{|u(x)-u(y)|^{p}}{|x-y|^{N+\alpha p}}dxdy\right)^{1/p}.
\end{align*}
Since $u= 0$ in $\R^N\setminus \O$, the above integral can be extended to all of $\R^N$. The embedding $\CW^{\alpha, p}_0\hookrightarrow L^r$ is continuous for any $r\in \left[1, p_{\alpha}^*\right]$ and compact for $r\in \left[1, p_{\alpha}^*\right)$. Further, for $0<\beta<\alpha<1< q \leq p$, then $\CW^{\alpha, p}_0\left(\O\right) \subset \CW^{\beta, q}_0\left(\O\right)$ (see \cite{BM}, Lemma 2.2). More details on $\CW^{\alpha, p}_0 \left(\O\right)$ and its properties can be found in \cite{EG}.
\begin{definition}\label{definition_1.1}
We say that $u \in \CW^{\alpha, p}_0\left(\O\right)$ is a (weak) solution of \eqref{1.1} if it holds
\begin{align*}
 &\displaystyle\int_{\R^N}\displaystyle\int_{\R^{N}}\left|u(x)- u(y)\right|^{p-2}\dfrac{\left(u(x)-u(y)\right)\left(v(x)-v(y)\right)}{\left|x-y\right|^{N+\alpha p}}dxdy\\& \hspace{1cm}+ \displaystyle\int_{\R^N}\displaystyle\int_{\R^{N}}\left|u(x)- u(y)\right|^{q-2}\dfrac{\left(u(x)-u(y)\right)\left(v(x)-v(y)\right)}{\left|x-y\right|^{N+\beta q}}dxdy= \lambda\displaystyle\int_{\O}\left(a(x) \left|u\right|^{p-2}u + b(x) \left|u\right|^{q-2}u\right)vdx
\end{align*}
for all $v \in \CW^{\alpha, p}_0\left(\O\right)$.
\end{definition} 

\begin{lemma}[Brezis-Lieb lemma]\label{lem.2.1}
If $\left\{u_n\right\}_{n\in\N}$ is a sequence weakly convergent to some $u$ in $\CW^{\alpha,p}_{0}(\Gw)$, then
\begin{align*}
\left[u_n-u\right]^p_{s,p,\CO} =\left[u_n\right]^p_{s,p, \CO} - \left[u\right]^p_{s,p, \CO}+o_n(1). 
\end{align*}
\end{lemma}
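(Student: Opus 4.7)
The plan is to reduce the statement to the classical Brezis--Lieb lemma in $L^p$, applied to the Gagliardo difference quotients viewed as functions on $\CO \subset \R^{2N}$. Specifically, I would introduce
\[
F_n(x,y) := \frac{u_n(x)-u_n(y)}{|x-y|^{(N+\alpha p)/p}}, \qquad F(x,y) := \frac{u(x)-u(y)}{|x-y|^{(N+\alpha p)/p}},
\]
so that $\|F_n\|_{L^p(\CO)}^p = [u_n]^p_{s,p,\CO}$, $\|F\|_{L^p(\CO)}^p = [u]^p_{s,p,\CO}$, and $\|F_n-F\|_{L^p(\CO)}^p = [u_n-u]^p_{s,p,\CO}$. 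The identity to prove is then exactly the Brezis--Lieb statement
\[
\|F_n-F\|_{L^p(\CO)}^p = \|F_n\|_{L^p(\CO)}^p - \|F\|_{L^p(\CO)}^p + o_n(1).
\]

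To apply the classical lemma I need two ingredients. First, uniform boundedness: since $(u_n)$ is weakly convergent in $\CW^{\alpha,p}_0(\Omega)$, the sequence $([u_n]_{s,p,\CO})$ is bounded, hence $\sup_n \|F_n\|_{L^p(\CO)} < \infty$. Second, pointwise a.e.\ convergence: I would use the compact embedding $\CW^{\alpha,p}_0(\Omega) \hookrightarrow L^p(\Omega)$ (recalled in the excerpt for exponents in $[1,p^*_\alpha)$), which together with weak convergence yields $u_n \to u$ strongly in $L^p(\Omega)$, and therefore $u_n(x) \to u(x)$ for a.e.\ $x \in \Omega$ after passing to a subsequence; since $u_n = u = 0$ on $\R^N \setminus \Omega$, this gives $u_n(x) \to u(x)$ for a.e.\ $x \in \R^N$, and hence $F_n \to F$ almost everywhere on $\CO$.

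With (a) and (b) in hand, the classical Brezis--Lieb lemma in $L^p(\CO, dx\,dy)$ delivers the displayed identity along the extracted subsequence. To upgrade from subsequence to full sequence I would invoke the standard Urysohn-type principle: if the asserted $o_n(1)$ were to fail, some subsequence would keep $\bigl|[u_n-u]^p_{s,p,\CO} - [u_n]^p_{s,p,\CO} + [u]^p_{s,p,\CO}\bigr| \geq \varepsilon > 0$; but the argument above applies to that subsequence as well, producing a further subsequence along which the expression tends to zero, a contradiction.

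The main obstacle, and the only step that requires genuine thought, is securing the a.e.\ convergence of $u_n$ to $u$ from mere weak convergence; this is what forces us to use the compact Sobolev embedding and to work on the bounded domain $\Omega$ (this is where the extension-by-zero convention for $\CW^{\alpha,p}_0(\Omega)$ matters, so that $F_n \to F$ a.e.\ on all of $\CO$, not just on $\Omega \times \Omega$). The rest is bookkeeping: identifying the Gagliardo seminorms with $L^p$-norms of the difference-quotient kernels, and passing from the subsequential to the full-sequence conclusion.
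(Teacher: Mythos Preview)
Your proposal is correct and follows the standard route: embed the Gagliardo quotients as functions in $L^p(\CO)$, obtain boundedness from weak convergence and a.e.\ convergence from the compact embedding $\CW^{\alpha,p}_0(\Omega)\hookrightarrow L^p(\Omega)$ (plus the zero extension outside $\Omega$), then invoke the classical Brezis--Lieb lemma and upgrade to the full sequence via the subsequence principle. The paper does not supply its own argument for this lemma; it simply refers the reader to \cite{AT}, Lemma~2.1, whose proof proceeds exactly along the lines you describe.
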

The proof of Lemma \ref{lem.2.1} can be found in \cite{AT}, Lemma 2.1.

\begin{lemma}[\cite{BM}, Lemma 2.2]
Let $0 < \alpha < \beta < 1 < q \leq p$ and $\O$ be a smooth bounded domain in $\R^N,$ 
where $\alpha p<N$. Then $\CW^{\alpha, p}_0\left(\O\right) \subset \CW^{\beta, q}_0\left(\O\right)$ and there exists $C = C\left(|\O|, N, p, q, \alpha, \beta\right) >0$ such that 
\begin{align*}
\left\|u\right\|_{\CW^{\beta, q}_0\left(\O\right)}\leq C\left\|u\right\|_{\CW^{\alpha, p}_0\left(\O\right)}.
\end{align*}
\end{lemma}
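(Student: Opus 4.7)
The plan is to establish the norm estimate $\|u\|_{\CW^{\beta,q}_0(\O)}\leq C\|u\|_{\CW^{\alpha,p}_0(\O)}$ directly; the set-theoretic inclusion then follows. Fix $u\in\CW^{\alpha,p}_0(\O)$ and use that $u\equiv 0$ on $\R^N\setminus\O$ to split the Gagliardo integral according to whether both points lie in $\O$ or exactly one lies outside:
$$[u]_{\beta,q,\CO}^q \;=\; \int_{\O}\!\int_{\O}\!\frac{|u(x)-u(y)|^q}{|x-y|^{N+\beta q}}\,dxdy \;+\; 2\int_{\O}|u(x)|^q\!\left(\int_{\R^N\setminus\O}\!\frac{dy}{|x-y|^{N+\beta q}}\right)\!dx \;=:\; I_{\mathrm{int}}+2I_{\mathrm{bdry}}.$$

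For $I_{\mathrm{bdry}}$ I would estimate the inner integral by a spherical-shell computation, giving $\int_{\R^N\setminus\O}|x-y|^{-(N+\beta q)}dy \leq C\,\dist(x,\partial\O)^{-\beta q}$, and then combine Hölder's inequality with conjugate exponents $p/q$ and $p/(p-q)$ with the fractional Hardy inequality $\int_\O|u|^p/\dist(x,\partial\O)^{\alpha p}dx\leq C[u]_{\alpha,p,\CO}^p$ (valid on $\CW^{\alpha,p}_0(\O)$ under $\alpha p<N$) to bound $I_{\mathrm{bdry}}\leq C(|\O|,N,p,q,\alpha,\beta)\,[u]_{\alpha,p,\CO}^q$. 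For $I_{\mathrm{int}}$ I would split on the threshold $|x-y|\geq\diam(\O)$ versus $|x-y|<\diam(\O)$: the far region is handled by the triangle inequality $|u(x)-u(y)|^q\leq 2^{q-1}(|u(x)|^q+|u(y)|^q)$ and Fubini, reducing to $C\|u\|_{L^q(\O)}^q$, which is in turn controlled via the Sobolev embedding $\CW^{\alpha,p}_0(\O)\hookrightarrow L^{p_\alpha^*}(\O)$ (available because $\alpha p<N$) followed by Hölder on the bounded set $\O$.

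The close region $\{|x-y|<\diam(\O)\}$ is the heart of the argument and the main obstacle. A naive Hölder application with the exponents $(p/q,p/(p-q))$ produces the singular kernel $|x-y|^{-N-pq(\beta-\alpha)/(p-q)}$, which is not integrable near the diagonal, so the direct approach must be refined. The strategy I would pursue is to trade part of the smoothness for integrability by exploiting the Sobolev gain: introduce an interpolation of the form $[u]_{\beta,q}^q\leq C[u]_{\alpha,p}^{\theta q}\|u\|_{L^r}^{(1-\theta)q}$ with $\theta\in(0,1)$ and $r\in[1,p_\alpha^*]$ determined by the compatibility relations between $(\alpha,p)$, $(\beta,q)$ and $(0,r)$, and then absorb the $L^r$-factor using the Sobolev embedding. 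Equivalently, one can pass through the identification of the Gagliardo seminorms with Besov norms and invoke standard Besov embeddings; either route crucially uses the hypothesis $\alpha p<N$, which is precisely what compensates for the adverse ordering $\alpha<\beta$ of the smoothness indices. Assembling the contributions from $I_{\mathrm{bdry}}$, the far part of $I_{\mathrm{int}}$, and the close part of $I_{\mathrm{int}}$ produces the required estimate with $C=C(|\O|,N,p,q,\alpha,\beta)$.
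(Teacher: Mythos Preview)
The paper does not give its own proof of this lemma; it is simply quoted from \cite{BM}. More to the point, the statement as printed carries a typographical error in the ordering of the smoothness exponents: a few lines above the formal lemma the paper itself records the correct hypothesis ``for $0<\beta<\alpha<1<q\leq p$, then $\CW^{\alpha,p}_0(\O)\subset\CW^{\beta,q}_0(\O)$'', and this is what \cite{BM} actually proves. With $\alpha<\beta$ the inclusion is false, and the condition $\alpha p<N$ does not save it. A concrete counterexample in $\R^2$: take a smooth bounded $\O$ containing the origin, $\alpha=1/10$, $p=5$, $\beta=3/5$, $q=2$ (so $\alpha p=1/2<2=N$, $q\leq p$, $\alpha<\beta$), and let $u$ be a smooth cutoff of $\sign(x_1)$ vanishing near $\partial\O$; then $u\in\CW^{\alpha,p}_0(\O)$ because $\alpha p<1$, but $u\notin\CW^{\beta,q}_0(\O)$ because $\beta q=6/5>1$.

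This is exactly why your treatment of the close region cannot be completed. The interpolation inequality you invoke, $[u]_{\beta,q}\leq C[u]_{\alpha,p}^\theta\|u\|_{L^r}^{1-\theta}$, would force $\beta\leq\theta\alpha\leq\alpha$, and Besov embeddings never raise the smoothness index, so neither route exists when $\alpha<\beta$. Under the intended hypothesis $\beta<\alpha$, however, the step you dismissed as ``naive H\"older'' already suffices after splitting off the tail $|x-y|>1$: the residual kernel then carries the exponent $N+pq(\beta-\alpha)/(p-q)<N$ and is locally integrable over the bounded region, while the tail contributes only $C\|u\|_{L^q(\O)}^q$, which is controlled by the fractional Poincar\'e inequality on $\CW^{\alpha,p}_0(\O)$. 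No Hardy inequality and no separate interior/boundary decomposition are needed.
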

In the following sections we will use the mountain pass theorem to show the existence of a solution to
\eqref{1.1}. For the convenience of the reader, we recall the mountain pass theorem.
\begin{lemma}[Mountain Pass Theorem, see \cite{AR, SV, MW} ]\label{lem_2.4}
Let $X$ be a real Banach space and $\Phi \in C^1\left(X, \R\right)$. Suppose that $\Phi(0) = 0$ and
that there exist $\beta, \rho > 0$ and $x_1 \in X\setminus B_{\rho}(0)$ such that
\begin{enumerate}
\item $\Phi(u) \geq \beta$ for all $u\in X$ with $\left\|u\right\|_X = \rho$;
\item $\Phi\left(x_1\right)<\beta$.
\end{enumerate}
There exists a sequence $\left(u_n\right) \subset X$ satisfying
\begin{align*}
\Phi\left(u_n\right) \to c\,\,\,\text{and}\,\,\, \Phi^{\prime}\left(u_n\right)\to 0,
\end{align*}
where $c$ is the minimax level, defined by
\begin{align*}
c := \inf\left\{\max\limits_{t\geq 0}
\Phi\left(\gamma(t)\right) : \gamma \in C\left([0, 1], \R\right),\, \gamma(0) = 0\,\,\, \text{and}\,\,\, \gamma(1) = x_1\right\}.
\end{align*}
\end{lemma}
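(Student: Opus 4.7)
The plan is to derive the conclusion from Ekeland's variational principle applied to the path space, which is the cleanest route and avoids an explicit deformation lemma. First I would introduce
\[
\Gamma := \{\gamma \in C([0,1],X) : \gamma(0)=0,\ \gamma(1)=x_1\},
\]
endowed with the uniform metric $d(\gamma_1,\gamma_2) := \max_{t\in[0,1]} \|\gamma_1(t)-\gamma_2(t)\|_X$, so that $(\Gamma,d)$ is a complete metric space, together with the functional $\Psi : \Gamma \to \mathbb{R}$ defined by $\Psi(\gamma) := \max_{t\in[0,1]} \Phi(\gamma(t))$, which is continuous on $\Gamma$. Since $t \mapsto \|\gamma(t)\|_X$ is continuous with $\|\gamma(0)\|_X = 0 < \rho \leq \|\gamma(1)\|_X$, the intermediate value theorem guarantees that every $\gamma \in \Gamma$ crosses the sphere $\{\|u\|_X = \rho\}$, and hypothesis (1) then forces $\Psi(\gamma) \geq \beta$; in particular, the minimax level satisfies $c \geq \beta > 0$, while condition (2) together with $\Phi(0) = 0$ ensures that the endpoints of any $\gamma$ have $\Phi$-value strictly below $\beta$.

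Next I would apply Ekeland's variational principle to $\Psi$ on $\Gamma$. For each $n\in\mathbb{N}$ pick $\widetilde\gamma_n \in \Gamma$ with $\Psi(\widetilde\gamma_n) \leq c + \tfrac{1}{n^2}$. Ekeland then yields $\gamma_n \in \Gamma$ such that $\Psi(\gamma_n) \leq \Psi(\widetilde\gamma_n)$, $d(\gamma_n,\widetilde\gamma_n) \leq \tfrac{1}{n}$, and
\[
\Psi(\gamma) \geq \Psi(\gamma_n) - \frac{1}{n}\, d(\gamma,\gamma_n) \quad \text{for every } \gamma \in \Gamma.
\]
The goal is to extract from this perturbed minimizer a point $u_n$ on the image of $\gamma_n$ at which $\Phi(u_n)$ is close to $c$ and $\|\Phi'(u_n)\|_{X^*}$ is small.

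The main step, which I expect to be the principal obstacle, is to show that the compact set $M_n := \{t \in [0,1] : \Phi(\gamma_n(t)) = \Psi(\gamma_n)\}$ contains some $t_n$ with $\|\Phi'(\gamma_n(t_n))\|_{X^*} \leq \tfrac{1}{n}$. I would argue by contradiction: if instead $\|\Phi'(u)\|_{X^*} > \tfrac{1}{n}$ uniformly for $u$ in a neighbourhood $\mathcal{N}$ of the compact set $\gamma_n(M_n)$ (which, for $n$ large enough, avoids both $0$ and $x_1$ since $\Phi(0)=0 < c$ and $\Phi(x_1) < \beta \leq c$), I would invoke the pseudo-gradient lemma for $C^1$ functionals to obtain a locally Lipschitz vector field $V$ on $\mathcal{N}$ with $\|V(u)\|_X \leq 2$ and $\langle \Phi'(u), V(u)\rangle \geq \|\Phi'(u)\|_{X^*}$. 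Truncating $V$ by a smooth cut-off $\chi$ supported in $\mathcal{N}$ and in a neighbourhood of $M_n$ in the $t$-variable, and deforming
\[
\gamma_n^\varepsilon(t) := \gamma_n(t) - \varepsilon\,\chi(t,\gamma_n(t))\,V(\gamma_n(t)),
\]
one keeps $\gamma_n^\varepsilon \in \Gamma$ for small $\varepsilon>0$ with $d(\gamma_n^\varepsilon,\gamma_n) \leq 2\varepsilon$, while $\Phi(\gamma_n^\varepsilon(t))$ decreases at a first-order rate strictly larger than $\tfrac{2}{n}\varepsilon$ on $M_n$. A standard continuity/compactness argument then propagates this to $\Psi(\gamma_n^\varepsilon) \leq \Psi(\gamma_n) - c_0\varepsilon$ with $c_0 > \tfrac{2}{n}$, contradicting Ekeland's inequality.

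Granted this almost-critical point, the sequence $u_n := \gamma_n(t_n)$ satisfies $\Phi(u_n) = \Psi(\gamma_n) \in [c,\, c + \tfrac{1}{n^2}]$ and $\|\Phi'(u_n)\|_{X^*} \leq \tfrac{1}{n}$, so that $\Phi(u_n) \to c$ and $\Phi'(u_n) \to 0$ in $X^*$, as required. The technical heart of the argument is therefore the pseudo-gradient deformation step: everything else is bookkeeping around Ekeland's principle and the mountain-pass geometry.
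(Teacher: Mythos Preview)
The paper does not prove Lemma~\ref{lem_2.4}; it is quoted from the literature (Ambrosetti--Rabinowitz, Servadei--Valdinoci, Willem) and used as a black box, so there is no in-paper proof to compare against. Your proposal via Ekeland's variational principle on the complete metric space $(\Gamma,d)$ of admissible paths, combined with a pseudo-gradient deformation to locate an almost-critical point on each near-optimal path, is correct and is in fact one of the standard proofs --- it is essentially the argument given in Willem's \emph{Minimax Theorems}, one of the references the paper cites for this lemma. The geometric observation that every path crosses the sphere of radius $\rho$, the bound $c\geq\beta$, the use of $\Phi(0)=0<\beta$ and $\Phi(x_1)<\beta$ to keep the maximizing set $M_n$ away from the endpoints, and the contradiction via a localized pseudo-gradient flow are all handled correctly in your outline.

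One minor remark: the statement as printed in the paper contains typos (paths are written in $C([0,1],\mathbb{R})$ rather than $C([0,1],X)$, and the maximum is over $t\geq 0$ rather than $t\in[0,1]$); you have silently and correctly interpreted these as intended.
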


\section{\bf  Existence and non-existence of a positive solution}\label{section_3}
In this section, we present existence and non-existence results on a positive solution depend on $\lambda$  for the eigenvalue problem \eqref{1.1}.
 First, we define a functions $\Phi$ and $\Psi$ on $\CW^{\alpha,p}_0\left(\O\right)$ by
\begin{align}
\Phi(u) :&=
\dfrac{1}{p}\displaystyle\int_{\CO}\dfrac{\left|u(x) - u(y)\right|^p}{\left|x-y\right|^{N+\alpha p}}dxdy +\dfrac{1}{q}\displaystyle\int_{\CO}\dfrac{\left|u(x) - u(y)\right|^q}{\left|x-y\right|^{N+\beta q}}dxdy\nonumber\\& = \dfrac{1}{p}\left\|u\right\|_{\CW^{\alpha, p}_0\left(\O\right)} +\dfrac{1}{q}\left\|u\right\|_{\CW^{\beta, q}_0\left(\O\right)},
\end{align}
\begin{align}
\Psi(u):=\dfrac{1}{p}\displaystyle\int_{\O}a(x)\left|u\right|^pdx + \dfrac{1}{q}\displaystyle\int_{\O}b(x)\left|u\right|^qdx,\hspace{2,5cm}
\end{align}
for all $u\in \CW^{s,p}_0\left(\O\right)$.

Second, we define the Rayleigh quotient 
\begin{align}\label{1.7}
\lambda_1^*\left(\O\right) = \inf\left\{\dfrac{\Phi(u)}{\Psi(u)}; u\in \CW_0^{\alpha,p},\,\Psi(u)>0\right\}.
\end{align}


To prove the theorem \ref{theorem_1.1} , we first prove some lemmas.

\begin{lemma}\label{infimumeigenvalue}
Let 
\begin{align}\label{3.6}
\lambda_1^{*}\left(\O\right): = \inf\left\{\dfrac{\Phi(u)}{\Psi(u)}; u\in \CW_0^{\alpha, p}\left(\O\right), \Psi(u)>0 \right\}.
\end{align}
Then it holds $\lambda_1^{*}\left(\O\right) = \min\left\{\lambda_{1, p}^{a, \O}, \lambda_{1, q}^{b, \O}\right\}$.
\end{lemma}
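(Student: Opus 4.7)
The plan is to establish the identity by proving the two inequalities separately: an upper bound via well-chosen test functions, and a lower bound via the variational characterisation of each component eigenvalue. Throughout I would set $m := \min\{\lambda_{1,p}^{a,\O},\lambda_{1,q}^{b,\O}\}$ and exploit the homogeneity disparity between the $p$- and $q$-pieces produced by the strict inequality $q<p$.

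For the upper bound $\lambda_1^*(\O)\le m$, I would use the candidates $u_t = t\varphi_p^{a,\O}$ and $u_t = t\varphi_q^{b,\O}$ and let $t$ approach the appropriate endpoint. Normalising $\varphi_p^{a,\O}$ so that $\int_\O a|\varphi_p^{a,\O}|^p = 1$, one computes
\begin{align*}
\frac{\Phi(t\varphi_p^{a,\O})}{\Psi(t\varphi_p^{a,\O})}
= \frac{\tfrac{1}{p}\|\varphi_p^{a,\O}\|^p_{\CW^{\alpha,p}_0(\O)} + t^{q-p}\tfrac{1}{q}\|\varphi_p^{a,\O}\|^q_{\CW^{\beta,q}_0(\O)}}{\tfrac{1}{p}\int_\O a|\varphi_p^{a,\O}|^p + t^{q-p}\tfrac{1}{q}\int_\O b|\varphi_p^{a,\O}|^q}.
\end{align*}
Because $q-p<0$, letting $t\to+\infty$ makes the $p$-homogeneous terms dominate both numerator and denominator, and the limit is exactly $\lambda_{1,p}^{a,\O}$; hence $\lambda_1^*(\O)\le \lambda_{1,p}^{a,\O}$. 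A symmetric computation with $\varphi_q^{b,\O}$ and $t\to 0^+$ (so that $t^{q-p}\to\infty$ and the $q$-terms dominate) gives $\lambda_1^*(\O)\le \lambda_{1,q}^{b,\O}$. One auxiliary check is that $\Psi(u_t)>0$ for the chosen $t$; this follows since the dominant term in $\Psi(u_t)$ is positive by the normalisation.

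For the reverse inequality $\lambda_1^*(\O)\ge m$, take any admissible $u$, i.e.\ $u\in\CW_0^{\alpha,p}(\O)$ with $\Psi(u)>0$. From the variational characterisation of $\lambda_{1,p}^{a,\O}$ I have
$\|u\|^p_{\CW^{\alpha,p}_0(\O)}\ge \lambda_{1,p}^{a,\O}\int_\O a|u|^p$ whenever $\int_\O a|u|^p>0$, and the same bound holds trivially when $\int_\O a|u|^p\le 0$ since the left-hand side is nonnegative while the right-hand side is nonpositive (the eigenvalue is positive). The analogous bound holds for the $q$-piece. Summing the weighted inequalities, a short case analysis on the signs of $\int_\O a|u|^p$ and $\int_\O b|u|^q$ (noting that they cannot both be nonpositive, since $\Psi(u)>0$) yields $\Phi(u)\ge m\,\Psi(u)$, and dividing gives $\lambda_1^*(\O)\ge m$.

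The main obstacle is the indefinite sign of the weights: because $a,b$ are only assumed to lie in $L^\infty(\O)$ with $\{a>0\}$ and $\{b>0\}$ of positive measure, the integrals $\int_\O a|u|^p$ and $\int_\O b|u|^q$ may individually be zero or negative, so one cannot replace $\Phi\ge \lambda_{1,p}^{a,\O}\cdot\tfrac{1}{p}\int a|u|^p + \lambda_{1,q}^{b,\O}\cdot\tfrac{1}{q}\int b|u|^q$ by $m\Psi(u)$ in one line. Handling this bookkeeping with the three nontrivial sign configurations (both positive, or exactly one positive) is the delicate part; everything else reduces to a standard homogeneity/scaling argument exploiting $q<p$.
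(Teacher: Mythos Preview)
Your proposal is correct and matches the paper's proof almost exactly: the upper bound is obtained via the same scaling trick with $t\varphi_p^{a,\O}$ as $t\to+\infty$ and $t\varphi_q^{b,\O}$ as $t\to0^+$, and the lower bound by the same three-case sign analysis on $\int_\O a|u|^p$ and $\int_\O b|u|^q$. The only cosmetic difference is that the paper frames the lower bound as a contradiction while you argue it directly; the underlying inequalities are identical.
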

\begin{proof}[\bf Proof]
Let $\varphi_p^{a, \O}$ and $\varphi_q^{b, \O}$ be a positive eigenfunctions corresponding to $\lambda_{1, p}^{a, \O}$ and $\lambda_{1, q}^{b, \O}$, respectively, such that $\displaystyle\int_{\O}a(x){\varphi_p^{a, \O}}^pdx = 1$ and $\displaystyle\int_{\O}b(x){\varphi_q^{b, \O}}^qdx = 1$. We take $t>0$ large enough such that
\begin{align*}
\Psi\left(t\varphi_p^{a, \O}\right)= t^q\left(t^{p-q} + \displaystyle\int_{\O}b(x){\varphi_p^{a, \O}}^q dx\right).
\end{align*}
Since $q<p$, we deduce
\begin{align*}
\lambda_1^{*}\left(\O\right)\leq \dfrac{\Phi\left(t\varphi_p^{a, \O}\right)}{\Psi\left(t\varphi_p^{a, \O}\right)} = \dfrac{\dfrac{t^p}{p}\left\|\varphi_p^{a, \O}\right\|^p_{\CW^{\alpha, p}_0\left(\O\right)} + \dfrac{t^q}{q}\left\|\varphi_p^{a, \O}\right\|^q_{\CW^{\beta,q}_0\left(\O\right)}}{t^q\left(t^{p-q} + \displaystyle\int_{\O}b(x){\varphi_p^{a, \O}}^q dx\right)} = \dfrac{\lambda_{1, p}^{a, \O} + \dfrac{p}{q}t^{q-p}\left\|\varphi_p^{a, \O}\right\|^q_{\CW^{\beta,q}_0\left(\O\right)}}{1+\dfrac{p}{q}t^{p-q}\displaystyle\int_{\O}{\varphi_p^{a, \O}}^qdx}\to \lambda_{1, p}^{a, \O},
\end{align*}
as $t\to +\infty$.

Similarly, we have
\begin{align*}
\lambda_1^{*}\left(\O\right)\leq \dfrac{\Phi\left(t\varphi_q^{b, \O}\right)}{\Psi\left(t\varphi_q^{b, \O}\right)} = \dfrac{\dfrac{t^p}{p}\left\|\varphi_q^{b, \O}\right\|^p_{\CW^{\alpha,p}_0\left(\O\right)} + \dfrac{t^q}{q}\left\|\varphi_q^{b, \O}\right\|^q_{\CW^{\beta,q}_0\left(\O\right)}}{t^q\left(t^{p-q} + \displaystyle\int_{\O}b(x){\varphi_q^{b, \O}}^q dx\right)} = \dfrac{\lambda_{1, q}^{b, \O} + \dfrac{q}{p}t^{p-q}\left\|\varphi_q^{b, \O}\right\|^q_{\CW^{\beta,q}_0\left(\O\right)}}{1+\dfrac{q}{p}t^{p-q}\displaystyle\int_{\O}{\varphi_q^{b, \O}}^qdx}\to \lambda_{1, q}^{b, \O},
\end{align*}
as $t\to 0^+$. We observe that $\lambda_1^{*}\left(\O\right) \leq\min \left\{\lambda_{1, p}^{a, \O} ,\lambda_{1, q}^{b, \O}\right \}$.
Subsequently, we claim that $\lambda_1^{*}\left(\O\right)\geq \min\left\{\lambda_{1, p}^{a, \O} ,\lambda_{1, q}^{b, \O}\right\}$.

 Assume by the contradiction that $\lambda_1^{*}\left(\O\right)< \min\left\{\lambda_{1, p}^{a, \O} ,\lambda_{1, q}^{b, \O}\right\}$. It follows by \eqref{3.6}, we have that there exists $u\in \CW^{\alpha, p}_0\left(\O\right)$ satisfies 
 \[\dfrac{\Phi(u)}{\Psi(u)} <\min\left\{\lambda_{1, p}^{a, \O} ,\lambda_{1, q}^{b, \O}\right\}, \,\, \Psi(u)>0.\]
 $\textit{Case (i)}$\,\, If $\displaystyle\int_{\O}a(x)\left|u\right|^pdx>0$ and $\displaystyle\int_{\O}b(x)\left|u\right|^qdx\leq 0$, there hold $p\Psi\left(u\right) \leq \displaystyle\int_{\O}a(x)\left|u\right|^pdx$ and 
 \begin{align*}
 p\Phi(u) =\left\|u\right\|^p_{\CW^{\alpha, p}_0\left(\O\right)} +\dfrac{p}{q}\left\|u\right\|^q_{\CW^{\beta, q}_0\left(\O\right)}> \left\|u\right\|^p_{\CW^{\alpha, p}_0\left(\O\right)}
 \end{align*}
 since $p>q$. From the definition of $\lambda_{1, p}^{a, \O}$, we get a contradiction, 
 \begin{align}\label{3.2}
\min\left\{\lambda_{1, p}^{a, \O} ,\lambda_{1, q}^{b, \O}\right\}>\dfrac{\Phi(u)}{\Psi(u)} \geq \lambda_{1, p}^{a, \O}. 
\end{align}
 $\textit{Case (ii)}\,\, $ If  $\displaystyle\int_{\O}a(x)\left|u\right|^pdx \leq 0$ and $\displaystyle\int_{\O}b(x)\left|u\right|^qdx\geq 0$.  As in the proof of the $\textit{Case} (i)$, we derive a contradiction from 
 \begin{align*}
 \min\left\{\lambda_{1, p}^{a, \O} ,\lambda_{1, q}^{b, \O}\right\}>\dfrac{\Phi(u)}{\Psi(u)} \geq \lambda_{1, q}^{b, \O}. 
 \end{align*}

 $\textit{Case (iii)}\,\,\ $ If  $\displaystyle\int_{\O}a(x)\left|u\right|^pdx > 0$ and $\displaystyle\int_{\O}b(x)\left|u\right|^qdx> 0$.  Similarly, we also have
 \begin{align}\label{3.3}
 \Phi(u)\geq \dfrac{\lambda_{1, p}^{a, \O}}{p}\displaystyle\int_{\O}a(x)\left|u\right|^pdx+ \dfrac{\lambda_{1, q}^{b, \O}}{q}\displaystyle\int_{\O}b(x)\left|u\right|^qdx\geq \min\left\{\lambda_{1, p}^{a, \O} ,\lambda_{1, q}^{b, \O}\right\}\Psi(u),
 \end{align}
 which leads to a contradiction.
 
  It is evident that
 \begin{align*}
 \min\left\{\lambda_{1, p}^{a, \O} ,\lambda_{1, q}^{b, \O}\right\}>\dfrac{\Phi(u)}{\Psi(u)} \geq \min\left\{\lambda_{1, p}^{a, \O}, \lambda_{1, q}^{b, \O}\right\}. 
 \end{align*}
According to all former cases, we deduce $\lambda_1^{*}\left(\O\right) = \min\left\{\lambda_{1, p}^{a, \O}, \lambda_{1, q}^{b, \O}\right\}$.
\end{proof}
\begin{lemma}\label{lemma_3.3}
Assume $\lambda_{1, p}^{a, \O}\neq \lambda_{1, q}^{b, \O}$ or in other words the corresponding eigenfunctions $\varphi_p^{a, \O}$ and $\varphi_q^{b, \O}$ are linearly independent. Then the infimum in \eqref{1.7} is not achieved.
\end{lemma}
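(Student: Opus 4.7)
The plan is to argue by contradiction: suppose that the infimum in \eqref{1.7} is attained at some $u_0 \in \CW^{\alpha,p}_0(\O)$ with $\Psi(u_0) > 0$. I will show that $u_0$ must then be proportional to both $\varphi_p^{a,\O}$ and $\varphi_q^{b,\O}$, forcing $\lambda_{1,p}^{a,\O} = \lambda_{1,q}^{b,\O}$ and these two eigenfunctions to be linearly dependent, which contradicts the hypothesis.

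To carry this out, I would first write $\Phi(u) = \tfrac{1}{p}A(u) + \tfrac{1}{q}B(u)$ and $\Psi(u) = \tfrac{1}{p}C(u) + \tfrac{1}{q}D(u)$, where $A(u) = \|u\|_{\CW^{\alpha,p}_0(\O)}^p$ and $C(u) = \int_\O a(x)|u|^p\,dx$ are $p$-homogeneous, while $B(u) = \|u\|_{\CW^{\beta,q}_0(\O)}^q$ and $D(u) = \int_\O b(x)|u|^q\,dx$ are $q$-homogeneous. Because $u_0$ is a critical point of the Rayleigh quotient $\Phi/\Psi$, it satisfies the Euler-Lagrange equation $\Phi'(u_0) = \lambda_1^*(\O)\,\Psi'(u_0)$ in the dual of $\CW^{\alpha,p}_0(\O)$. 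Testing against $u_0$ and invoking Euler's identity for homogeneous functionals gives
\[
A(u_0) + B(u_0) = \lambda_1^*(\O)\bigl[C(u_0) + D(u_0)\bigr],
\]
while the minimizing identity $\Phi(u_0) = \lambda_1^*(\O)\Psi(u_0)$ reads
\[
\tfrac{1}{p}A(u_0) + \tfrac{1}{q}B(u_0) = \lambda_1^*(\O)\bigl[\tfrac{1}{p}C(u_0) + \tfrac{1}{q}D(u_0)\bigr].
\]
Since $p \neq q$, a short linear combination of these two relations decouples them into
\[
A(u_0) = \lambda_1^*(\O)\,C(u_0), \qquad B(u_0) = \lambda_1^*(\O)\,D(u_0).
\]

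Next, $\Psi(u_0) > 0$ forces $u_0 \not\equiv 0$, so both seminorms $A(u_0)$ and $B(u_0)$ are strictly positive, which by the identities above gives $C(u_0) > 0$ and $D(u_0) > 0$. The variational characterizations of $\lambda_{1,p}^{a,\O}$ and $\lambda_{1,q}^{b,\O}$ then imply $\lambda_1^*(\O) = A(u_0)/C(u_0) \geq \lambda_{1,p}^{a,\O}$ and $\lambda_1^*(\O) = B(u_0)/D(u_0) \geq \lambda_{1,q}^{b,\O}$. Combined with $\lambda_1^*(\O) = \min\{\lambda_{1,p}^{a,\O}, \lambda_{1,q}^{b,\O}\}$ from Lemma~\ref{infimumeigenvalue}, both inequalities must be equalities, so $\lambda_{1,p}^{a,\O} = \lambda_{1,q}^{b,\O}$ and $u_0$ simultaneously attains both Rayleigh quotients. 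By the simplicity of the principal eigenvalues due to Pezzo and Quaas, this forces $u_0 = c_1\,\varphi_p^{a,\O}$ and $u_0 = c_2\,\varphi_q^{b,\O}$ for non-zero constants $c_1, c_2$, so $\varphi_p^{a,\O}$ and $\varphi_q^{b,\O}$ are linearly dependent. This contradicts either alternative of the hypothesis.

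The main obstacle I anticipate is the rigorous justification of the Lagrange-multiplier step in the double-seminorm setting: Fréchet differentiability of $\Phi$ and $\Psi$ at $u_0$ along every direction $v \in \CW^{\alpha,p}_0(\O)$ must be verified, which is a standard computation given the embedding $\CW^{\alpha,p}_0(\O) \hookrightarrow \CW^{\beta,q}_0(\O)$ and the $L^\infty$ hypothesis on $a,b$, and one must also ensure that $\Psi(u_0) > 0$ prevents degeneracy so that the quotient rule for $\Phi/\Psi$ may be applied. Once that is in place, the algebraic decoupling exploiting $p \neq q$ is immediate.
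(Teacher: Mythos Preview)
Your argument is correct but proceeds along a genuinely different route from the paper. The paper never invokes the Euler--Lagrange equation; instead it argues purely by inequalities, splitting into the three cases from the proof of Lemma~\ref{infimumeigenvalue} according to the signs of $\int_\O a|u|^p\,dx$ and $\int_\O b|u|^q\,dx$, and in each case deriving a contradiction by comparing $\Phi(u)/\Psi(u)$ with the individual Rayleigh quotients. Only in the third case (both integrals positive) does it reach the conclusion that $u$ realizes both principal eigenvalues, hence is proportional to both eigenfunctions.

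Your approach instead exploits first-order optimality: testing $\Phi'(u_0)=\lambda_1^*(\O)\Psi'(u_0)$ against $u_0$ and combining with $\Phi(u_0)=\lambda_1^*(\O)\Psi(u_0)$ yields, via the distinct homogeneities $p\neq q$, the decoupled identities $A(u_0)=\lambda_1^*(\O)C(u_0)$ and $B(u_0)=\lambda_1^*(\O)D(u_0)$ in one stroke. This immediately forces $C(u_0),D(u_0)>0$ and lands you directly in the paper's Case~(iii) without any sign discussion. The price is that you must justify differentiability of the Rayleigh quotient at $u_0$, which (as you note) is routine here since $\Phi,\Psi\in C^1$ and $\Psi(u_0)>0$; the paper's approach is more elementary in that it needs no derivatives, but is correspondingly longer and more case-laden.
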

\begin{proof}[\bf Proofs]
We need to show that the infimum in \eqref{1.7} is not achieved. Arguing indirectly we assume that this does not hold, and hence there exists $u\in \CW^{\alpha, p}_0\left(\O\right)$ such that $\Psi(u)>0$ and $\dfrac{\Phi(u)}{\Psi(u)}=\lambda_1^*$. It follows from Lemma \ref{infimumeigenvalue}, we obtain 
\begin{align}\label{eig}
\dfrac{\Phi(u)}{\Psi(u)}=\lambda_1^{*}\left(\O\right) = \min\left\{\lambda_{1, p}^{a, \O}, \lambda_{1, q}^{b, \O}\right\}.
\end{align}
We argue by considering the three cases in the proof of Lemma \ref{infimumeigenvalue}.

 $\textit{Case (i)}$\,\, Combining the equations \eqref{3.2}, \eqref{eig} , and incorporating hypotheses $\displaystyle\int_{\O}b(x)\left|u\right|^qdx\leq 0$, we have
 \begin{align*}
\lambda_1^{*}\left(\O\right) =\dfrac{\Phi(u)}{\Psi(u)}\geq \dfrac{\left\|u\right\|^p_{\CW^{\alpha, p}_0\left(\O\right)} +\dfrac{p}{q}\left\|u\right\|^q_{\CW^{\beta, q}_o\left(\O\right)}}{\displaystyle\int_{\O}b(x)\left|u\right|^qdx}\geq \dfrac{\left\|u\right\|^p_{\CW^{\alpha,p}_0\left(\O\right)} }{\displaystyle\int_{\O}b(x)\left|u\right|^qdx}\geq \lambda_{1, q}^{b, \O}\geq \lambda_1^{*}\left(\O\right).
 \end{align*}
We deduce 
\begin{align*}
\left\|u\right\|^p_{\CW^{\alpha,p}_0\left(\O\right)}   = \lambda_{1, q}^{b, \O}\displaystyle\int_{\O}b(x)\left|u\right|^qdx\,\,\text{and}\,\, \left\|u\right\|^p_{\CW^{\alpha,p}_0\left(\O\right)}  =0.
\end{align*}
Therefore, $u = 0$. This contradicts with assumption that  $u\neq 0$.

$\textit{Case (ii)}$\,\,\, As in the proof of $\textit{Case (i)}$, we also get a contradiction.

$\textit{Case (iii)}$ \,\, From the equations \eqref{3.3} and \eqref{eig}, we deduce 
\begin{align}
\dfrac{\left\|u\right\|^p_{\CW^{\alpha,p}_0\left(\O\right)} }{\displaystyle\int_{\O}a(x)\left|u\right|^pdx}= \lambda_{1, p}^{a, \O}= \lambda_{1, q}^{b, \O} = \dfrac{\left\|u\right\|^q_{\CW^{\beta, q}_0\left(\O\right)}}{\displaystyle\int_{\O}b(x)\left|u\right|^qdx}.
\end{align}
It follows that $u =t\varphi_p^{a, \O} = s\varphi_q^{b, \O}$ for some $t\neq 0$ and $s\neq 0$. This is a contradiction to our hypothesis.
\end{proof}
Next we consider the following energy functional
\begin{align}\label{3.03}
J(u) = \Phi(u) - \lambda\Psi(u)
\end{align}
to study weak solutions to \eqref{1.1}. Clearly,  $J$ belongs to $C^1\left(\CW^{\alpha, p}_0\left(\Gw\right),\R\right)$, and its differential is given by
\begin{align}
\left(J^{\prime}(u),\varphi\right) =& \displaystyle\int_{\CO}\left[\dfrac{\left|u(x) - u(y)\right|^{p-2}}{\left|x-y\right|^{N+\alpha p}} + \dfrac{\left|u(x) - u(y)\right|^{q-2}}{\left|x-y\right|^{N+\beta q}}\right]\left(u(x)-u(y)\right)\left(\varphi(x)- \varphi(y)\right)dxdy\nonumber&\\
&-\lambda\left[\displaystyle\int_{\O}a(x)\left|u\right|^{p-2} + b(x)\left|u\right|^{q-2}\right]u\varphi dx.
\end{align}

We organize the proof of this existence result in several lemmas. First, we prove that the functional $J$ satisfies the following local Palais-Smale condition. In the following lemma, Lemma \ref{lem.3.2}, we use the symbol $O_n(1)$, which is the quantity tending $0$ when $n\to \infty$. 
\begin{lemma}\label{lem.3.2}
Let $\alpha p<N$ and $0\leq \lambda \neq \lambda_{1, p}^{a, \O}$. If $\{u_n\}$ is a bounded $(PS)_c$
sequence of the functional $J$ defined by \eqref{3.03} , then the functional $J$ satisfies $(PS)_c$ condition.
\end{lemma}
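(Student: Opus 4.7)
The plan is to extract strong convergence in $\CW^{\alpha,p}_0(\Omega)$ out of a bounded Palais--Smale sequence by testing $J'(u_n)-J'(u)$ against $u_n-u$ and exploiting the monotonicity of both fractional operators separately. First, since $\{u_n\}$ is bounded in the reflexive Banach space $\CW^{\alpha,p}_0(\Omega)$, I would pass to a subsequence (not relabelled) so that $u_n\rightharpoonup u$ in $\CW^{\alpha,p}_0(\Omega)$. By the chain of embeddings recorded in the paper (in particular $\CW^{\alpha,p}_0(\Omega)\hookrightarrow\CW^{\beta,q}_0(\Omega)$ via the cited Lemma 2.2 of \cite{BM}), one also has $u_n\rightharpoonup u$ in $\CW^{\beta,q}_0(\Omega)$. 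The compact embedding $\CW^{\alpha,p}_0(\Omega)\hookrightarrow\hookrightarrow L^r(\Omega)$ for every $r\in[1,p_\alpha^*)$ then yields $u_n\to u$ strongly in $L^p(\Omega)$ and $L^q(\Omega)$, and a subsequence converging a.e.

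Next, the key observation is that $\langle J'(u_n)-J'(u),u_n-u\rangle\to 0$. Indeed, $J'(u_n)\to 0$ in the dual while $\{u_n-u\}$ stays bounded in $\CW^{\alpha,p}_0(\Omega)$, so $\langle J'(u_n),u_n-u\rangle\to 0$; and $\langle J'(u),u_n-u\rangle\to 0$ by the very definition of weak convergence since $J'(u)$ is a fixed element of the dual. Splitting this pairing into the four natural terms, the two weight-integrals
\[
\lambda\int_\Omega a(x)\bigl(|u_n|^{p-2}u_n-|u|^{p-2}u\bigr)(u_n-u)\,dx, \qquad \lambda\int_\Omega b(x)\bigl(|u_n|^{q-2}u_n-|u|^{q-2}u\bigr)(u_n-u)\,dx
\]
tend to zero by the H\"older inequality using $a,b\in L^\infty(\Omega)$ together with the strong $L^p$ and $L^q$ convergence of $u_n$. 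Hence
\[
\bigl\langle (-\Delta_p)^\alpha u_n-(-\Delta_p)^\alpha u,\,u_n-u\bigr\rangle+\bigl\langle (-\Delta_q)^\beta u_n-(-\Delta_q)^\beta u,\,u_n-u\bigr\rangle\longrightarrow 0.
\]

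The final step is to invoke the pointwise Simon-type monotonicity inequality: for every $r>1$ there is $c_r>0$ such that $(|a|^{r-2}a-|b|^{r-2}b)(a-b)\geq 0$ for all $a,b\in\R$, with the quantitative strengthening $\geq c_r|a-b|^r$ when $r\geq 2$ and $\geq c_r|a-b|^2(|a|+|b|)^{r-2}$ when $1<r<2$. Applied pointwise with $a=u_n(x)-u_n(y)$, $b=u(x)-u(y)$ and integrated against the singular kernels $|x-y|^{-(N+\alpha p)}$ and $|x-y|^{-(N+\beta q)}$, this shows both operator terms are nonnegative, hence each tends to zero individually. In the case $p,q\geq 2$ one immediately reads off $[u_n-u]_{\alpha,p,\CO}\to 0$ and $[u_n-u]_{\beta,q,\CO}\to 0$; in the subquadratic case one first applies H\"older in the measure $|x-y|^{-N-sr}dxdy$, using the uniform bound on $[u_n]_{s,r}+[u]_{s,r}$, to convert the weaker Simon bound into control of $[u_n-u]_{s,r}$. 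Combining both gives $u_n\to u$ in $\CW^{\alpha,p}_0(\Omega)$, which is exactly the $(PS)_c$ condition.

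\noindent\textbf{Anticipated obstacle.} The delicate point is the subquadratic regime $1<q\leq p<2$, where the pointwise Simon inequality is only quadratic in $|a-b|$ and degenerates when $|a|+|b|$ is small. Passing from the quadratic monotonicity estimate to a genuine $L^p$-type (or $L^q$-type) bound on the Gagliardo difference requires a H\"older splitting against $(|u_n(x)-u_n(y)|+|u(x)-u(y)|)^{(2-r)r/2}|x-y|^{-(N+sr)}$, using the uniform boundedness of the Gagliardo seminorms to control the spurious factor. I would also flag the mildly technical point that $J'(u)\in(\CW^{\alpha,p}_0(\Omega))^\ast$ genuinely annihilates the weak null sequence $u_n-u$, which follows because $J'$ is continuous in the weak topology of the dual when composed with a weakly convergent sequence in the predual.
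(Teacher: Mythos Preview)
Your proposal is correct and follows essentially the same route as the paper's own proof: extract a weakly convergent subsequence via reflexivity and compact Sobolev embedding, test $J'(u_n)-J'(u)$ against $u_n-u$, kill the weighted lower-order terms with H\"older and strong $L^p$/$L^q$ convergence, and then invoke the Simon-type pointwise monotonicity inequalities (the quantitative ones for $r\geq 2$, and the degenerate quadratic version plus a H\"older splitting against the bounded seminorms for $1<r<2$) to force $[u_n-u]_{\alpha,p,\CO}\to 0$ and $[u_n-u]_{\beta,q,\CO}\to 0$. The paper organises the bookkeeping slightly differently (introducing the bilinear forms $A_p$, $B_q$ and writing the identity \eqref{3.12} before splitting into the cases $p>q>2$ and $1<p\leq 2$), but the analytic content is identical to what you outlined, including your correctly anticipated handling of the subquadratic case.
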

\begin{proof}[\bf Proof]
Let $\{u_n\}$ be a $(PS)_c$ sequence, that is
\begin{align}
J\left(u_n\right)\to c \ \ \ \textrm{and}\ \ \left\|J^{\prime}\left(u_n\right)\right\|_{\left(\CW^{\alpha, p}_0\left(\O\right)\right)^{\prime}}\to 0\ \ \ \textrm{as}\ \ \ n\to\infty.
\end{align}
 By standard arguments we can
show that $\left\{u_n\right\}$ is bounded in $\CW^{\alpha,p}_{0}\left(\Gw\right)$, there exists a subsequence, still denoted by $\{u_n\}$,
such that
\begin{align}\label{3.8}
& u_n \rightharpoonup  u\ \ \textrm{weakly\ in}\ \ \CW^{\alpha,p}_0\left(\O\right), \,\, u_n \to u \,\, a.e \,\,\textrm{in}\,\, \R^N,&\\\nonumber
& u_n \to u \ \ \textrm{strongly\ in}\ \ L^{\gamma}\left(\O\right)\,\, \textrm{for}\,\, 1\leq \gamma< p^*_{\alpha}.
\end{align}
To begin with, we prove  $u_n \to u$ in $\CW^{\alpha,p}_0\left(\O\right) $.  Let us fix $\varphi \in \CW^{\alpha,p}_0\left(\O\right)$  and denote by 
\begin{align}
&A_p\left(u,\varphi\right) = \displaystyle\int_{\CO}\dfrac{\left|u(x) - u(y)\right|^{p-2}(u(x)- u(y))}{\left|x-y\right|^{N+\alpha p}}\varphi(x)- \varphi(y)dxdy, &\\ \nonumber
& B_q\left(u,\varphi\right) = \displaystyle\int_{\CO}\dfrac{\left|u(x) - u(y)\right|^{q-2}(u(x)- u(y))}{\left|x-y\right|^{N+\beta q}}\varphi(x)- \varphi(y)dxdy,
\end{align}
where $A_p\left(u,\varphi\right)$ and $B_q\left(u,\varphi\right)$ are the linear functions.  Clearly,  by H$\ddot{o}$lder inequality then $A_p\left(u,\varphi\right) + B_q\left(u,\varphi\right)$ is also continuous and
\begin{align}
\left|A_p\left(u,\varphi\right) + B_q\left(u,\varphi\right)\right|&\leq \left|A_p\left(u,\varphi\right) \right|+\left| B_q\left(u,\varphi\right)\right|\leq \left[v\right]^{p-1}_{\alpha, p, \CO}\left[\varphi\right]_{\alpha, p, \CO}+ \left[v\right]^{q-1}_{\beta, q, \CO}\left[\varphi\right]_{\beta, q, \CO}&\\ \nonumber
&\leq \left( \left\|v\right\|^{p-1}_{\CW^{\alpha, p}_0\left(\O\right)}+\left\|v\right\|^{q-1}_{\CW^{\beta, q}_0\left(\O\right)} \right)\left\|\varphi\right\|_{\CW^{\alpha, p}_0\left(\O\right)}.
\end{align}
Since $u_n \rightharpoonup  u$ weakly in $\CW^{\alpha, p}_0\left(\O\right)$, we deduce that
 $\lim_{n\to\infty} A_p\left(u,\varphi\right) =\lim_{n\to\infty} B_q\left(u,\varphi\right) = 0$, and 
 \begin{align}
 \lim_{n\to\infty}[A_p\left(u_n-u,u\right)+ B_q\left(u_n-u,u\right)] = 0.
 \end{align}
 It is noticed that
 \begin{align}\label{3.12}
 O_n(1)& = \left(J^{\prime}\left(u_n\right) - J(u), u_n -u\right)&\\ \nonumber
 & = \left[A_p\left(u_n-u, u_n\right)+ B_q\left(u_n-u, u_n\right)\right]- \left[A_p\left(u_n-u, u\right)+ B_q\left(u_n-u,u\right)\right]-\lambda \Lambda_n,
 \end{align}
  where
 \begin{align*} 
 \Lambda_n = \displaystyle\int_{\O}\left[a(x)\left(\left|u_n\right|^{p-2}u_n - \left|u\right|^{p-2}u\right) + b(x)\left(\left|u_n\right|^{q-2}u_n - \left|u\right|^{q-2}u\right)\right]\left(u_n-u\right) dx .
 \end{align*}
 as  $n\to \infty$. It follows by \eqref{3.8}, we have $\Lambda_n \to 0$ as $n\to \infty$.

Let us now recall the well-known vector inequalities.
 For $1<p \leq 2$, thanks to the inequality
 \begin{align}\label{3.13}
 \dfrac{\left|\xi - \eta\right|^p}{\left(\left|\xi\right|^2 + \left|\eta\right|^{\frac{2-p}{2}}\right)} \leq C_1(p)\left(\left|\xi\right|^{p-2}\xi -\left|\eta\right|^{p-2}\eta \right)^{\frac{p}{2}}\left(\xi -\eta\right)^{\frac{p}{2}},\,\,\,\forall \xi, \eta \in \R^N.
 \end{align}
 On the other hand, for $2<p<+\infty$, we have
\begin{align}\label{3.14}
\left|\xi - \eta \right|^p \leq  C_2(p) \left(\left|\xi\right|^{p-2}\xi -\left|\eta\right|^{p-2}\eta \right)\left(\xi -\eta\right),\,\,\forall \xi, \eta \in \R^N.
\end{align}
 It is noticed that $C_1(p)$ and $C_2(p)$ are positive constants depending only on $p$.
 
We assume that $p > q > 2$. We obtain  \begin{align}\label{3.15}
\nonumber  \left[u_n - u\right]^p_{\alpha,p, \CO} &= \displaystyle\int_{\CO}\dfrac{\left|u_n(x) - u_n(y)-(u(x)- u(y))\right|^{p}}{\left|x-y\right|^{N+\alpha p}}dxdy\\&\nonumber
 \leq C_2(p)\displaystyle\int_{\CO}\left[\left|u_n(x) - u_n(y)\right|^{p-2}\left(u_n(x) - u_n(y)\right) -\left|u(x) - u(y)\right|^{p-2}\left(u(x) - u(y)\right) \right]\\&\times \left(u_n(x) - u_n(y)-(u(x)- u(y))\right)\left|x-y\right|^{-N-\alpha p}dxdy\\&\nonumber
\leq C_2(p)\left[ A_p\left(u_n-u,u_n\right) - A_p\left(u_n-u,u\right)\right].
  \end{align}
Similarly,
\begin{align}\label{3.16}
\nonumber  \left[u_n - u\right]^q_{\beta,q, \CO} &= \displaystyle\int_{\CO}\dfrac{\left|u_n(x) - u_n(y)-(u(x)- u(y))\right|^{q}}{\left|x-y\right|^{N+\beta q}}dxdy\\&\nonumber
 \leq C_2(q)\displaystyle\int_{\O}\left[\left|u_n(x) - u_n(y)\right|^{q-2}\left(u_n(x) - u_n(y)\right) -\left|u(x) - u(y)\right|^{q-2}\left(u(x) - u(y)\right) \right]\\&\times \left(u_n(x) - u_n(y)-(u(x)- u(y))\right)\left|x-y\right|^{-N-\beta q}dxdy\\&\nonumber
\leq C_2(q)\left[ B_q\left(u_n-u,u_n\right) - B_q\left(u_n-u,u\right)\right].
  \end{align}
 Let $C_0= \min\left\{C_2^{-1}(p), C_2^{-1}(q)\right\}$. From the equations \eqref{3.15} and \eqref{3.16}, we have
 \begin{align}
  A_p\left(u_n-u, u_n\right)- A_p\left(u_n-u, u\right) + B_q\left(u_n-u, u_n\right)- B_q\left(u_n-u,u\right)\geq C_0\left(\left[u_n - u\right]^q_{\beta, q, \CO}+\left[u_n - u\right]^p_{\alpha, p, \CO} \right).
 \end{align}
It follows from \eqref{3.12} that
 \begin{align}
 C_0\left(\left[u_n-u\right]^p_{\alpha, p, \CO} + \left[u_n-u\right]^q_{\beta, q, \CO}\right)\leq O_n(1)+ \lambda\Lambda_n.
 \end{align}
Hence, we have $u_n \to u$ in $\CW^{\alpha,p}_0\left(\O\right)$ as $n\to \infty$.
 
In the case $1<p\leq 2$. By \eqref{3.8}, we deduce that there exists $M>0$ such that $\left[u_n\right]_{\alpha, p, \CO}\leq M$. Then from the equation \eqref{3.13} and the Hölder inequality, it follows that
\begin{align}\label{3.20}
 \left[u_n - u\right]^p_{\alpha, p, \CO} &\leq C_3(p)\left[ A_p\left(u_n-u,u_n\right) - A_p\left(u_n-u,u\right)\right]^{\frac{p}{2}}\left(\left[u_n\right]_{\alpha, p, \CO}^{\frac{p\left(2-p\right)}{2}}+ \left[u\right]_{\alpha, p, \CO}^{\frac{p\left(2-p\right)}{2}}\right)\nonumber\\& \leq
 C_4(p)\left[ A_p\left(u_n-u,u_n\right) - A_p\left(u_n-u,u\right)\right]^{\frac{p}{2}}.
\end{align}
Similarly, for $1 < q < 2$, we have
\begin{align}\label{3.21}
 \left[u_n - u\right]^q_{\beta, q, \CO} &\leq C_5(q)\left[ B_q\left(u_n-u,u_n\right) - B_q\left(u_n-u,u\right)\right]^{\frac{q}{2}}\left(\left[u_n\right]_{\beta, q, \CO}^{\frac{q\left(2-q\right)}{2}}+ \left[u\right]_{\beta, q, \CO}^{\frac{q\left(2-q\right)}{2}}\right)\nonumber\\& \leq
 C_6(q)\left[ B_q\left(u_n-u,u_n\right) - B_q\left(u_n-u,u\right)\right]^{\frac{q}{2}}.
\end{align}
Combining Eqs. \eqref{3.20}, \eqref{3.21}, we obtain
 \begin{align}
  A_p\left(u_n-u, u_n\right)- A_p\left(u_n-u, u\right) + B_q\left(u_n-u, u_n\right)- B_q\left(u_n-u,u\right)\geq C_1\left(\left[u_n - u\right]^2_{\alpha, p, \CO} + \left[u_n - u\right]^2_{\beta, q, \CO}\right),
 \end{align}
 with some $C_1>0$. This proves $u_n \to u$ strongly in $\CW^{\alpha, p}_0\left(\O\right)$ as $n\to \infty$.
 
  Therefore,
$J$ satisfies the $(PS)_c$ condition in $\CW^{\alpha, p}_0\left(\O\right)$.
\end{proof}

\begin{lemma}\label{lem.3.3}
Assume that $\lambda_{1, q}^{b, \O}< \lambda<\lambda_{1, p}^{a, \O}$, then the function $J$ is coercive.
\end{lemma}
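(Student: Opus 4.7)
The plan is to exploit the variational characterizations of the two principal eigenvalues $\lambda_{1,p}^{a,\O}$ and $\lambda_{1,q}^{b,\O}$ to get separate lower bounds on the $p$-part and $q$-part of $J(u)=\Phi(u)-\lambda\Psi(u)$, and then use the strict inequality $p>q$ to conclude coercivity, all expressed in the single norm $\|\cdot\|_{\CW^{\alpha,p}_0(\O)}$.

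First I would control the $p$-part. By the variational formula for $\lambda_{1,p}^{a,\O}$ (displayed just after \eqref{1.9}), one has $\int_\O a(x)|u|^p\,dx \leq (\lambda_{1,p}^{a,\O})^{-1}\|u\|_{\CW^{\alpha,p}_0(\O)}^{p}$ whenever $\int_\O a(x)|u|^p\,dx>0$; the same inequality holds trivially when this integral is nonpositive, since $\lambda>0$. Hence
\[
\frac{1}{p}\|u\|_{\CW^{\alpha,p}_0(\O)}^{p}-\frac{\lambda}{p}\int_\O a(x)|u|^p\,dx \;\geq\; c_1\,\|u\|_{\CW^{\alpha,p}_0(\O)}^{p},
\]
with $c_1:=\frac{1}{p}\bigl(1-\lambda/\lambda_{1,p}^{a,\O}\bigr)>0$ thanks to the upper bound $\lambda<\lambda_{1,p}^{a,\O}$.

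Next I would handle the $q$-part by a case distinction on the sign of $\int_\O b(x)|u|^q\,dx$. If this integral is $\leq 0$, the $q$-part of $J$ is already nonnegative. Otherwise I apply the variational formula for $\lambda_{1,q}^{b,\O}$ to obtain $\int_\O b(x)|u|^q\,dx \leq (\lambda_{1,q}^{b,\O})^{-1}\|u\|_{\CW^{\beta,q}_0(\O)}^{q}$, which — since now $\lambda>\lambda_{1,q}^{b,\O}$ — gives only a lower bound of the form $-\tilde{c}_2\,\|u\|_{\CW^{\beta,q}_0(\O)}^{q}$. The crucial step is to convert this bound into the $\CW^{\alpha,p}_0(\O)$ norm by invoking the continuous embedding $\CW^{\alpha,p}_0(\O)\hookrightarrow \CW^{\beta,q}_0(\O)$ from Bhakta--Mukherjee's Lemma cited in the excerpt, producing $-c_2\,\|u\|_{\CW^{\alpha,p}_0(\O)}^{q}$ for some $c_2>0$.

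Combining the two estimates yields
\[
J(u) \;\geq\; c_1\,\|u\|_{\CW^{\alpha,p}_0(\O)}^{p}-c_2\,\|u\|_{\CW^{\alpha,p}_0(\O)}^{q},
\]
and the strict inequality $p>q$ forces $J(u)\to+\infty$ as $\|u\|_{\CW^{\alpha,p}_0(\O)}\to+\infty$. The main (mild) obstacles are the indefinite signs of $\int a|u|^{p}$ and $\int b|u|^{q}$, handled by the elementary case distinctions above, and the need to reduce both bounds to a common norm so that the leading $p$-power can absorb the negative $q$-power; it is precisely the embedding $\CW^{\alpha,p}_0\hookrightarrow\CW^{\beta,q}_0$ that makes this comparison possible.
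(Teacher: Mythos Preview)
Your proof is correct and follows essentially the same strategy as the paper's: use the variational characterization of $\lambda_{1,p}^{a,\O}$ together with $\lambda<\lambda_{1,p}^{a,\O}$ to extract a positive $c_1\|u\|_{\CW^{\alpha,p}_0(\O)}^p$ from the $p$-part, bound the $q$-part below by $-c_2\|u\|_{\CW^{\alpha,p}_0(\O)}^q$, and conclude from $p>q$. The only cosmetic difference is how the $q$-term is controlled: the paper bounds $\int_\O b|u|^q$ directly via $\|b\|_\infty$, H\"older and the Sobolev--Poincar\'e inequality (so it never actually uses the hypothesis $\lambda>\lambda_{1,q}^{b,\O}$, which is superfluous for coercivity), whereas you route through the Rayleigh quotient for $\lambda_{1,q}^{b,\O}$ and then the embedding $\CW^{\alpha,p}_0(\O)\hookrightarrow\CW^{\beta,q}_0(\O)$.
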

\begin{proof}[\bf Proof]
Let $u\in \CW^{\alpha, p}_0\left(\O\right)$ with $\displaystyle\int_{\O}a(x)u_+^pdx \leq 0$, by using H$\ddot{o}$lder's inequality and fractional Sobolev embedding, we obtain
\begin{align}
J(u)& = \dfrac{1}{p}\displaystyle\int_{\CO}\dfrac{\left|u(x) - u(y)\right|^p}{\left|x-y\right|^{N+\alpha p}}dxdy +\dfrac{1}{q}\displaystyle\int_{\CO}\dfrac{\left|u(x) - u(y)\right|^q}{\left|x-y\right|^{N+\beta q}}dxdy - \dfrac{\lambda}{p}\displaystyle\int_{\O}a(x)\left|u\right|^pdx - \dfrac{\lambda}{q}\displaystyle\int_{\O}b(x)\left|u\right|^qdx \nonumber\\& 
\geq \dfrac{1}{p}\displaystyle\int_{\CO}\dfrac{\left|u(x) - u(y)\right|^p}{\left|x-y\right|^{N+\alpha p}}dxdy +\dfrac{1}{q}\displaystyle\int_{\CO}\dfrac{\left|u(x) - u(y)\right|^q}{\left|x-y\right|^{N+\beta q}}dxdy - \dfrac{\lambda}{q}\displaystyle\int_{\O}b(x)\left|u\right|^qdx \nonumber\\& 
\geq \dfrac{1}{p}\left\|u\right\|^p_{\CW^{\alpha,   p}_0\left( \O\right)}  - \dfrac{\lambda \left\|b\right\|_{\infty}\left|\O\right|^{1-q/p} }{q{\lambda_{1, p}^{a, \O}}^{q/p}}\left\|u\right\|^q_{\CW^{\alpha,   p}_0\left( \O\right)}\geq C_1 \left\|u\right\|^p_{\CW^{\alpha,   p}_0\left( \O\right)},
\end{align}
where $C_1 = \min\left\{\dfrac{1}{p},\dfrac{\lambda \left\|b\right\|_{\infty}\left|\O\right|^{1-q/p} }{q{\lambda_{1, p}^{a, \O}}^{q/p}} \right\}$.

Now, let $u\in \CW^{\alpha, p}_0\left(\O\right)$ with $\displaystyle\int_{\O}a(x)u_+^pdx > 0$. By our assumption of $\lambda <\lambda_{1, p}^{a, \O}$, we may fix $\varepsilon>0$ such that 
\begin{align*}
\left(1-\varepsilon\right)\lambda_{1, p}^{a, \O}>\lambda.
\end{align*}
Hence, in turn, we get
\begin{align}
J(u)& \geq \dfrac{\varepsilon}{p}\displaystyle\int_{\CO}\dfrac{\left|u(x) - u(y)\right|^p}{\left|x-y\right|^{N+\alpha p}}dxdy +\dfrac{\left(1-\varepsilon\right)\lambda_{1, p}^{a, \O}-\lambda}{q}\displaystyle\int_{\O}a(x)u_+^pdx - \dfrac{\lambda}{q}\displaystyle\int_{\O}b(x)\left|u\right|^qdx\nonumber\\& \geq \dfrac{\varepsilon}{p}\left\|u\right\|^p_{\CW^{\alpha, p}_0\left(\O\right)}  - \dfrac{\lambda \left\|b\right\|_{\infty}\left|\O\right|^{1-q/p} }{q{\lambda_{1, p}^{a, \O}}^{q/p}}\left\|u\right\|^q_{\CW^{\alpha, p}_0\left(\O\right)}\geq C_2 \left\|u\right\|^p_{\CW^{\alpha, p}_0\left(\O\right)},
\end{align}
where $C_2 = \min\left\{\dfrac{\varepsilon}{p},\dfrac{\lambda \left\|b\right\|_{\infty}\left|\O\right|^{1-q/p} }{q{\lambda_{1, p}^{a, \O}}^{q/p}} \right\}$.

Hence, $J$ is coercive and bounded from below.
\end{proof}
In the sequel, we show that if $0<\alpha<\beta<1 < q \leq p$ and $a, b$\, satisfy \eqref{1.02}, then $J$ possesses a mountain pass geometry.
\begin{lemma}\label{lem.3.4} Assume $0<\beta<\alpha\leq 1 < q < p$, then there exists $\lambda_*>0$ such that
\begin{enumerate}
\item There exists $\delta > 0$ and $\rho >0$ such that for all $\lambda \in \left(0, \lambda_*\right)$,
$J(u)\geq \delta$ on $\left\|u\right\|_{\CW^{\alpha, p}_0\left(\O\right)} = \rho$,\\
\item  There exists $u_0 \in \CW^{\alpha,p}_0\left(\O\right)$ with $\left\|u_0\right\|_{\CW^{\alpha, p}_0\left(\O\right)}>\rho$ and $J(u_0)< 0$.
\end{enumerate}
\end{lemma}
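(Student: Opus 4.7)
The strategy is to verify the two mountain-pass geometric conditions for $J=\Phi-\lambda\Psi$, exploiting the fact that $\Phi$ is a sum of a $\|\cdot\|^p$- and a $\|\cdot\|^q$-term with $p>q$ in order to separate the behaviour of $J$ near the origin and near infinity.

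For part (1), the plan is to dominate $\Psi$ by a suitable power of the norm $\|\cdot\|_{\CW^{\alpha,p}_0(\O)}$. Since $\O$ is bounded and $\alpha p<N$, the continuous embeddings $\CW^{\alpha,p}_0(\O)\hookrightarrow L^p(\O)$ and $\CW^{\alpha,p}_0(\O)\hookrightarrow L^q(\O)$ (the latter via H\"older, using $q<p$ and $|\O|<\infty$), combined with $a,b\in L^\infty(\O)$, yield constants $C_1,C_2>0$ such that
\[
|\Psi(u)|\leq \frac{\|a\|_\infty}{p}\|u\|_{L^p(\O)}^p+\frac{\|b\|_\infty}{q}\|u\|_{L^q(\O)}^q \leq C_1\|u\|^p_{\CW^{\alpha,p}_0(\O)}+C_2\|u\|^q_{\CW^{\alpha,p}_0(\O)}.
\]
Discarding the nonnegative $\CW^{\beta,q}_0$-term in $\Phi(u)$ and restricting to the sphere $\|u\|_{\CW^{\alpha,p}_0(\O)}=\rho$ gives
\[
J(u)\geq \rho^q\Bigl[\rho^{p-q}\Bigl(\tfrac{1}{p}-\lambda C_1\Bigr)-\lambda C_2\Bigr].
\]
Fixing $\rho>0$ first, the bracketed expression stays above a positive constant once $\lambda$ is smaller than a threshold $\lambda_{*}$ depending on $\rho,C_1,C_2$, producing the required $\delta>0$.

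For part (2), the plan is to exhibit a ray $t\mapsto t\phi$ along which $J$ is eventually negative with $\|t\phi\|_{\CW^{\alpha,p}_0(\O)}>\rho$. Because $a>0$ on a set of positive Lebesgue measure, one can choose $\phi\in C_c^\infty(\O)$ with $\int_\O a|\phi|^p\,dx>0$. Expanding in powers of $t$ gives
\[
J(t\phi)=\frac{t^p}{p}\Bigl[\|\phi\|^p_{\CW^{\alpha,p}_0(\O)}-\lambda\int_\O a|\phi|^p\,dx\Bigr]+\frac{t^q}{q}\Bigl[\|\phi\|^q_{\CW^{\beta,q}_0(\O)}-\lambda\int_\O b|\phi|^q\,dx\Bigr].
\]
Selecting $\phi$ close to the first eigenfunction $\varphi_p^{a,\O}$ (whose existence and variational characterization were recalled in the introduction) forces the coefficient of $t^p$ to be strictly negative once the parameter crosses the appropriate eigenvalue threshold, and then the scaling $p>q$ makes the $t^p$ term dominant, so $J(t\phi)\to-\infty$ as $t\to\infty$. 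Taking $t$ large enough yields the desired $u_0:=t\phi$ with $\|u_0\|_{\CW^{\alpha,p}_0(\O)}>\rho$ and $J(u_0)<0$.

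The main obstacle is the bookkeeping needed to calibrate $\lambda_{*}$ and $\rho$ so that both inequalities hold genuinely: part (1) pushes $\lambda$ to be small relative to the Sobolev and H\"older constants, while part (2) needs the test-function coefficient of $t^p$ to be driven negative, which is an eigenvalue-type condition coupling $\lambda$ to $\|\phi\|^p_{\CW^{\alpha,p}_0(\O)}/\int_\O a|\phi|^p\,dx$. Additional care is required because $\Phi$ involves two inequivalent fractional norms in $\CW^{\alpha,p}_0(\O)$ and $\CW^{\beta,q}_0(\O)$, and one must make sure the estimates only use the embedding chain that is genuinely available under the standing assumption $0<\beta<\alpha\leq 1$ and $q<p$ (so that $\CW^{\alpha,p}_0(\O)\subset\CW^{\beta,q}_0(\O)$ is at our disposal, but not the reverse).
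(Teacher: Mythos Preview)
Your approach to both parts is essentially the same as the paper's. For part~(1) the paper likewise bounds $\Psi$ via $a,b\in L^\infty(\O)$ and the Sobolev embedding, drops the nonnegative $\CW^{\beta,q}_0$-term from $\Phi$, factors out $\|u\|^q_{\CW^{\alpha,p}_0(\O)}$, and then fixes $\rho=t_0$ and defines $\lambda_*$ so that the bracketed expression stays positive; for part~(2) the paper simply asserts ``$J(tu)\to-\infty$ as $t\to\infty$'' and takes $u_0=\tau u$ for $\tau$ large, which is exactly what your explicit expansion in powers of $t$ is justifying.

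The tension you flag in your closing paragraph is genuine and is not resolved in the paper's proof either: making the coefficient of $t^p$ negative requires $\lambda>\|\phi\|^p_{\CW^{\alpha,p}_0(\O)}\big/\int_\O a|\phi|^p\,dx\geq\lambda_{1,p}^{a,\O}$, which is incompatible with the small-$\lambda$ regime of part~(1) as written. In practice the lemma is invoked in the proof of Theorem~1.1(iii) only under the hypothesis $\lambda>\lambda_{1,p}^{a,\O}$, so one should read (1) and (2) as the two halves of the mountain-pass geometry in that regime rather than for a common interval $(0,\lambda_*)$; your proposal is on the same footing as the paper's argument, and your diagnosis of the bookkeeping mismatch is accurate.
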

\begin{proof}[\bf Proof]
Since $a, b \in L^{\infty}\left(\O\right)$. By the Hölder inequality  and Sobolev embeddings, we have
\begin{align}
J(u)&\geq \dfrac{1}{p}\left\|u\right\|^p_{\CW^{\alpha, p}_0\left(\O\right)} + \dfrac{1}{q}\left\|u\right\|^q_{\CW^{\beta, q}_0\left(\O\right)} - \lambda \dfrac{C_1}{p}\left\|u\right\|_{\CW^{\alpha,p}_0\left(\O\right)}^p- \lambda \dfrac{C_2}{q}\left\|u\right\|_{\CW^{\beta, q}_0\left(\O\right)}^q\nonumber\\&
\geq \dfrac{1}{p}\left\|u\right\|^p_{\CW^{\alpha, p}_0\left(\O\right)}  - \lambda \dfrac{C_1}{p}\left\|u\right\|^p_{\CW^{\alpha, p}_0\left(\O\right)}- \lambda \dfrac{C_2}{q}\left\|u\right\|^q_{\CW^{\alpha, p}_0\left(\O\right)}\nonumber\\&
= \left\|u\right\|^q_{\CW^{\alpha, p}_0\left(\O\right)}\left(\dfrac{1}{p}\left\|u\right\|^{p-q}_{\CW^{\alpha, p}_0\left(\O\right)}- \lambda \dfrac{C_2}{q}- \lambda \dfrac{C_1}{p}\left\|u\right\|_{\CW^{\alpha, p}_0\left(\O\right)}^{p-q}\right).
\end{align}
We claim that $\dfrac{1}{p}\left\|u\right\|^{p-q}_{\CW^{\alpha, p}_0\left(\O\right)}- \lambda \dfrac{C_2}{q}- \lambda \dfrac{C_1}{p}\left\|u\right\|_{\CW^{\alpha, p}_0\left(\O\right)}^{p-q}>0$. 

Indeed, if we choose $\lambda_{*} = \dfrac{1}{p}\dfrac{t_0^{p-q}}{\dfrac{C_2}{q}+\dfrac{C_1}{p} t_0^{p-q} }$ with $t_0>0$, we denote  
\begin{align}
\delta = \left(\dfrac{1}{p} -\lambda_{*}\dfrac{C_1}{p}\right)t_0^{p-q} - \lambda_{*}\dfrac{C_2}{q} >0,
\end{align}
then for every $ \lambda\in \left(0, \lambda_{*}\right)$,  we have $\left(\dfrac{1}{p} -\lambda\dfrac{C_1}{p}\right)t_0^{p-q} - \lambda\dfrac{C_2}{q}>  \delta$.

We deduce  $J(u)\geq \delta$ for all $u\in \CW^{\alpha,p}_0\left(\O\right)$ when we choose $\rho = t_0$. Hence,
(1) is verified.

Since $J\left(tu\right) \to -\infty$ as $t \to \infty$, we therefore can find $\tau > 0$ sufficiently large such that $
\left\|\tau u\right\|_{\CW^{\alpha, p}_0\left( \O\right)}> \rho$ and $J\left(\tau u\right) < 0$. This
fact shows that (2) holds true.
\end{proof}
Now, we can prove Theorem \ref{theorem_1.1}.
\begin{proof}[\bf Proof Theorem \ref{theorem_1.1}]
i)
Assume by contradiction that there exists a non-trivial solution $u$ of problem \eqref{1.1}. Then, for every $s > 0$, we have that $v = su$ is also a non-trivial solution of problem \eqref{1.1} within coefficient slight changing. We can choose $s^{p-q} = \dfrac{p}{q}$ and then act
with $su$ as test function on the problem \eqref{1.1}. We have
\begin{align*}
0<p\Phi(su) = q\lambda\Psi(su).
\end{align*}
This is together with Lemma \ref{infimumeigenvalue}, we obtain
\begin{align*}
\lambda = \dfrac{\Phi(su)}{\Psi(su)} \geq \lambda_1^* = \min\left\{\lambda_{1, p}^{a, \O}, \lambda_{1, q}^{b, \O}\right\}.
\end{align*}
We get a contradiction and this confirms the first assertion of the theorem.

ii) The second conclusion $(ii)$ follows directly from Lemma \ref{lemma_3.3}.

iii) First, we assume that $\lambda_{1, q}^{b, \O} <\lambda $.  The function $J(u)$ is sequentially weakly lower semi-continuous. Indeed, if $u_n \rightharpoonup u$ in $\CW^{\alpha,p}_0\left(\O\right)$ as $n\to \infty$, so 
\begin{align*}
J(u)&\leq \dfrac{1}{p}\liminf_{n\to\infty}\left[u_n\right]_{\alpha, p, \CO}+ \dfrac{1}{q}\liminf_{n\to\infty}\left[u_n\right]_{\beta, q, \CO}-\liminf_{n\to\infty}\left[\dfrac{\lambda}{p}\displaystyle\int_{\O}a(x)\left|u_n\right|^pdx + \dfrac{\lambda}{q}\displaystyle\int_{\O}b(x)\left|u_n\right|^qdx\right]\\&\leq \liminf_{n\to\infty}\left(\dfrac{1}{p}\left[u_n\right]_{\alpha, p, \CO}+ \dfrac{1}{q}\left[u_n\right]_{\beta, q, \CO}-\dfrac{\lambda}{p}\displaystyle\int_{\O}a(x)\left|u_n\right|^pdx - \dfrac{\lambda}{q}\displaystyle\int_{\O}b(x)\left|u_n\right|^qdx\right)\\& = \liminf_{n\to\infty}J\left(u_n\right).
\end{align*}
Followed by Lemma \ref{lem.3.3} that $J\left(u\right)$ is coercive and bounded from below. Therefore, by a standard result (see, eg \cite{ATG}, Theorem 1.1), there exists a local minimizer $u_0$ of $J(u)$.

Second, in order to proof $u_0\neq 0$, we show that $J\left(u_0\right)= \min_{\CW^{\alpha,p}_0\left(\O\right)}J <0$. 

 Indeed, let $\varphi_p^{a, \O}$ be the eigenfunction corresponding to $\lambda_{1, q}^{b, \O}$ that satistifies $\displaystyle\int_{\O}b(x){\varphi_p^{a, \O}}^qdx = 1$. Because $\lambda>\lambda_{1, q}^{b, \O}$, for sufficiently small $\tau>0$ it holds
\begin{align*}
J\left(t\varphi_p^{a, \O}\right) = \tau^q\left(\dfrac{\tau^{p-q}}{p}\left[\varphi_p^{a, \O}\right]_{\alpha, p, \CO} - \dfrac{\lambda \tau^{p-q}}{p}\displaystyle\int_{\O}a(x){\varphi_p^{a, \O}}^pdx + \dfrac{\lambda_{1, q}^{b, \O} - \lambda}{q}\right)<0.
\end{align*} 
Later, suppose $\lambda_{1, p}^{a, \O}< \lambda$. Recalling that $J$ satisfies the Palais-Smale condition by virtue of Lemma \ref{lem.3.2}, and followed by Lemma \ref{lem.3.4} allow us to apply the mountain pass theorem, which guarantees the existence of a critical value $c\geq \delta$ of $J$, with $\delta >0$, namely
\begin{align*}
c: = \inf_{\gamma\in \Sigma}\max_{\tau\in\left[0,1\right]}J\left(\gamma(\tau)\right),
\end{align*}
\[\Sigma: = \left\{\gamma\in C\left([0,1],\,\, \CW^{\alpha,p}_0\left(\O\right)\right): \gamma(0) = 0, \gamma(1) = \tau\varphi_p^{a, \O}\right\}.\]
This completes the proof of Theorem \ref{theorem_1.1}.
\end{proof}
\section{\bf Principal eigenvalue}\label{section_4}
In this section, we show that existence the principal eigenvalue and continuous family of eigenvalue of problem \eqref{generality}.
\subsection{ Existence principal eigenvalue  }
We say that $\lambda$ is a principal eigenvalue of problem \eqref{generality} if there exists a eigenfunction $u \in\CW,\,\, u\neq 0$ and $u \geq 0$ that satisfies
\begin{align*}
 &\displaystyle\int_{\R^N}\displaystyle\int_{\R^{N}}\left|u(x)- u(y)\right|^{p-2}\dfrac{\left(u(x)-u(y)\right)\left(v(x)-v(y)\right)}{\left|x-y\right|^{N+\alpha p}}dxdy\\& \hspace{4cm}+ \displaystyle\int_{\R^N}\displaystyle\int_{\R^{N}}\left|u(x)- u(y)\right|^{q-2}\dfrac{\left(u(x)-u(y)\right)\left(v(x)-v(y)\right)}{\left|x-y\right|^{N+\beta q}}dxdy = \lambda\displaystyle\int_{\R^N}a(x) \left|u\right|^{p-2}u vdx
\end{align*}
for all $v \in \CW$. 
The following proposition is a result on existence eigenvalue of \eqref{generality}.
\begin{proposition}\label{proposition_4.1}
Suppose that \eqref{1.11} holds. Then the quantity
  \begin{align}\label{eigenvalue}
 \lambda_1\left(\alpha, \beta, p, q,\, \R^N\right) = \inf\limits_{u\in\CW \setminus \left\{0\right\}}\dfrac{\displaystyle\int_{\R^N}\displaystyle\int_{\R^N}\dfrac{\left|u(x) - u(y)\right|^p}{\left|x-y\right|^{N+\alpha p}}dxdy +\dfrac{p}{q}\displaystyle\int_{\R^N}\displaystyle\int_{\R^N}\dfrac{\left|u(x) - u(y)\right|^q}{\left|x-y\right|^{N+\beta q}}dxdy}{\displaystyle\int_{\R^N}a \left|u\right|^{p}dx}
 \end{align}
 is positive principal eigenvalue $\lambda_1\left(\alpha, \beta, p, q, \, \R^N\right)\in \R$, which is simple and the associated positive eigenfunction.
\end{proposition}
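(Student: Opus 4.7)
The plan is to establish $\lambda_1(\alpha,\beta,p,q,\mathbb{R}^N)$ as the minimum of the Rayleigh-type quotient in \eqref{eigenvalue} via the direct method of the calculus of variations, with the crucial ingredient being a weighted compactness result that leverages the integrability/decay assumption \eqref{1.11} on $a$. Set
\begin{equation*}
F(u)=[u]_{\alpha,p}^{p}+\tfrac{p}{q}[u]_{\beta,q}^{q},\qquad G(u)=\displaystyle\int_{\R^N} a(x)|u|^{p}\,dx,
\end{equation*}
so that $\lambda_1 = \inf\{F(u)/G(u) : u\in\CW,\, G(u)>0\}$. First I would verify $\lambda_1>0$: using the interpolation identity $\tfrac{p-t}{p^*_\alpha}+\tfrac{p(1-t)}{s}=1$ together with Hölder's inequality and $a\in L^{(q^*_\beta/s)'}(\R^N)$, one controls $G(u)$ by a product of $[u]_{\alpha,p}^{p-t}$ and a power of $\|u\|_{L^s}$, where $s\in (q,q^*_\beta)$ by the choice $t\in(0,\sqrt{(p-q)/p})$; the $L^s$-factor is in turn bounded by powers of $[u]_{\alpha,p}$ and $[u]_{\beta,q}$ through the embeddings $W^{\alpha,p}(\R^N)\hookrightarrow L^{p^*_\alpha}$ and $W^{\beta,q}(\R^N)\hookrightarrow L^{q^*_\beta}$, giving $G(u)\le C F(u)^{\theta}\, G(u)^{1-\theta}$ for suitable $\theta>0$ after normalization, hence $\lambda_1\ge C^{-1/\theta}>0$.

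Next I would pick a minimizing sequence $\{u_n\}\subset\CW$ with $G(u_n)=1$ and $F(u_n)\to\lambda_1$. The control $F(u_n)=O(1)$ gives boundedness of the Gagliardo seminorms; combined with the $L^{p^*_\alpha}\cap L^{q^*_\beta}$-embeddings and the interpolation mentioned above, one obtains boundedness of $\|u_n\|_{L^p(\R^N)}$ and $\|u_n\|_{L^q(\R^N)}$, hence $\{u_n\}$ is bounded in the reflexive space $\CW$. Passing to a subsequence, $u_n\rightharpoonup u$ in $\CW$, $u_n\to u$ a.e.\ on $\R^N$, and $u_n\to u$ strongly in $L^r_{\mathrm{loc}}(\R^N)$ for every $r\in[1,q^*_\beta)$, by the local Rellich–Kondrachov-type compactness of $W^{\beta,q}$.

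The main obstacle is the loss of global compactness: it is not obvious that $G(u_n)\to G(u)$, because $u_n^p$ may concentrate mass at infinity. Here the decay hypothesis $a\in L^{(q^*_\beta/s)'}(\R^N)$ is decisive. I would prove $G(u_n)\to G(u)$ via a tail estimate: for any $R>0$, Hölder's inequality gives
\begin{equation*}
\displaystyle\int_{|x|>R} a(x)|u_n-u|^p\,dx \le \Bigl(\displaystyle\int_{|x|>R} a(x)^{(q^*_\beta/s)'}\,dx\Bigr)^{1/(q^*_\beta/s)'}\cdot\|u_n-u\|_{L^{q^*_\beta}(\R^N)}^{pt}\cdot\|u_n-u\|_{L^{p^*_\alpha}(\R^N)}^{p(1-t)},
\end{equation*}
where the last two factors stay bounded by the embeddings, and the first factor is $o_R(1)$ uniformly in $n$ by absolute continuity of the integral of $a^{(q^*_\beta/s)'}$; on $\{|x|\le R\}$ the strong $L^p_{\mathrm{loc}}$ convergence (together with $a\in L^\infty$) handles the contribution. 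This is precisely where Lions' concentration–compactness framework for tight sequences comes in to rule out vanishing and dichotomy, as outlined in the introduction. With $G(u_n)\to G(u)=1\ne 0$ and weak lower semicontinuity $F(u)\le\liminf F(u_n)=\lambda_1$, we get $F(u)/G(u)=\lambda_1$; replacing $u$ by $|u|$ (which does not increase $F$ and preserves $G$), the minimizer can be assumed nonnegative.

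Finally, the Lagrange multiplier rule applied to the pair $(F,G)$ in $\CW$ shows that $u$ solves \eqref{generality} with eigenvalue $\lambda_1>0$, so $u$ is a nontrivial nonnegative eigenfunction. Strict positivity $u>0$ a.e.\ follows from the strong maximum principle for $(-\Delta_p)^\alpha+(-\Delta_q)^\beta$ applied to the nonnegative supersolution $u$. For simplicity, I would use the standard strategy: if $u,v$ are two nonnegative $\lambda_1$-eigenfunctions normalized by $G(u)=G(v)=1$, then $w_t=(tu^p+(1-t)v^p)^{1/p}$ is a valid competitor and strict convexity of $F$ along the curve $t\mapsto w_t$ (a Picone/hidden-convexity inequality for the fractional $p$- and $q$-Laplacians, proved e.g.\ by pointwise inequalities of Brasco–Franzina type) forces $u$ and $v$ to be proportional; combined with the normalization this yields uniqueness up to positive scaling, which is the content of simplicity. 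The main technical work, and the step I expect to be the hardest, is the tail estimate yielding $G(u_n)\to G(u)$ under the weight hypothesis \eqref{1.11}, since both the critical Sobolev exponents and the mismatch between the $W^{\alpha,p}$ and $W^{\beta,q}$ scales must be juggled simultaneously.
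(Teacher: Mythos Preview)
Your approach is essentially the paper's: direct method on the Rayleigh quotient, boundedness of a minimizing sequence via the interpolation estimate built from \eqref{1.11}, weak lower semicontinuity of the Gagliardo seminorms, compactness of $u\mapsto\int_{\R^N} a|u|^p$ coming from the integrability of $a$, Lagrange multipliers to obtain the Euler equation, and simplicity through a hidden-convexity inequality for the competitor $\bigl(\tfrac{u^p+v^p}{2}\bigr)^{1/p}$. The only tactical divergence is in ruling out $u=0$: you normalize $G(u_n)=1$ and carry the constraint through the limit by your tail estimate, whereas the paper does not normalize and instead argues by contradiction---if the weak limit were $0$, then both seminorms of $u_j$ would vanish, and the lower bound $\Phi(u_j)\ge C\,\|u_j\|_{W^{\alpha,p}}^{-pt}\|u_j\|_{W^{\beta,q}}^{-(p-q-pt^2)}$ forces $\Phi(u_j)\to+\infty$. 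It is exactly in that blow-up step that the restriction $t<\sqrt{(p-q)/p}$ from \eqref{1.11} is consumed, so be aware that your cleaner normalization route hides where this hypothesis on $t$ actually enters the argument; conversely, your explicit tail estimate makes the passage $\int a|u_n|^p\to\int a|u|^p$ more transparent than the paper's bare assertion that $u_j\to\bar u$ in $L^p(\R^N)$.
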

\begin{proof}[\bf Proof]

We integrate the ideas of [\cite{BZ}, Proposition 1] to deal with existence eigenvalue in problem \eqref{generality}.
First, we define $\Phi: \CW\setminus\left\{0\right\}\to \R^+$ by
\begin{align*}
\Phi(u) = \dfrac{\displaystyle\int_{\R^N}\displaystyle\int_{\R^N}\dfrac{\left|u(x) - u(y)\right|^p}{\left|x-y\right|^{N+\alpha p}}dxdy +\dfrac{p}{q}\displaystyle\int_{\R^N}\displaystyle\int_{\R^N}\dfrac{\left|u(x) - u(y)\right|^q}{\left|x-y\right|^{N+\beta q}}dxdy}{\displaystyle\int_{\R^N}a(x) \left|u\right|^{p}dx}.
\end{align*}
It is easy check that $\Phi$ is well-defined since $\displaystyle\int_{\R^N}a(x) \left|u\right|^{p}dx \leq \left\|a\right\|_{L^{\infty}\left(\R^N\right)}\left\|u\right\|_{L^p\left(\R^N\right)}^p<+\infty$. We recall that the first eigenvalue $\lambda_1$ of the fractional $p-$Laplacian without weight function  is characterized by
\begin{align*}
\lambda_1 = \inf\limits_{u\in W^{\alpha, p}\left(\R\right)\setminus\left\{0\right\}}\dfrac{\displaystyle\int_{\R^N}\displaystyle\int_{\R^N}\dfrac{\left|u(x) - u(y)\right|^p}{\left|x-y\right|^{N+\alpha p}}dxdy }{\displaystyle\int_{\R^N} \left|u\right|^{p}dx}.
\end{align*}
It is noticed that 
\begin{align*}
\dfrac{\displaystyle\int_{\R^N}\displaystyle\int_{\R^N}\dfrac{\left|u(x) - u(y)\right|^p}{\left|x-y\right|^{N+\alpha p}}dxdy }{\displaystyle\int_{\R^N} \left|a(x)\right|\left|u\right|^{p}dx}\geq \dfrac{1}{\left\|a\right\|_{L^{\infty}\left(\R^N\right)}}\dfrac{\displaystyle\int_{\R^N}\displaystyle\int_{\R^N}\dfrac{\left|u(x) - u(y)\right|^p}{\left|x-y\right|^{N+\alpha p}}dxdy }{\displaystyle\int_{\R^N} \left|u\right|^{p}dx}\geq \lambda_1.
\end{align*}
This implies $\lambda_1\left(\alpha, \beta, p, q,\, \R^N\right)\geq \lambda_1>\cdots >-\infty$. Thus, $\lambda_1\left(\alpha, \beta, p, q,\, \R^N \right)$ exists. 

We claim that $\lambda_1\left(\alpha, \beta, p, q, \, \R^N\right) = \inf\limits_{u\in\CW\setminus\left\{0\right\}}\Phi(u)$. Let $\left\{u_j\right\}\subseteq \CW\setminus\left\{0\right\}$ such that $\lim\limits_{j\to+\infty} \Phi\left(u_j\right) =\lambda_1\left(\alpha, \beta, p, q, \, \R^N\right)$ in $\R$, we deduce that there exists a $\kappa(\varepsilon)\in \N$ such that  $0<\Phi\left(u_j\right)-\lambda_1\left(\alpha, \beta, p, q, \, \R^N\right)<\varepsilon$ for every $j\geq \kappa(\varepsilon)$, within $\varepsilon>0$.
 Since $\left|\left|u_j(x)\right| -\left|u_j(y)\right| \right| \leq \left|u_j(x)- u_j(y)\right|,\,\,\forall \left(x,y\right)\in \R^{2N}$, we have
 \begin{align*}
 0<\Phi\left(\left|u_j\right|\right)-\lambda_1\left(\alpha, \beta, p, q,\, \R^N\right)<\varepsilon,\,\, \forall j\geq \kappa\left(\varepsilon\right).
 \end{align*}
Observe that $\left|u_j\right| \in \CW$ because $u_j \in \CW$.  We may assume, without loss of generality, that $u_j\geq 0$ (we only consider $\left|u_j\right|$). We obtain that $\left\{\Phi\left(u_j\right)\right\}$  is a convergent sequence in $\R$ and then it is also bounded, namely $\left|\Phi\left(u_j\right)\right|\leq C,\,\,\,\forall j\geq 1$. Since $a(x)$ is bounded, we deduce from \eqref{1.11} that
\begin{align}\label{5.1}
\displaystyle\int_{\R^N} \left|a(x)\right|\left|u_j\right|^{p}dx &\leq \displaystyle\int_{\R^N} \left|a(x)\right|^{\frac{pt}{p_{\alpha}^*}}\left|u_j\right|^{tp}\left|a(x)\right|^{\frac{p\left(1-t\right)}{s}}\left|u_j\right|^{\left(1-t\right)p}dx\nonumber\\& \leq
\left(\displaystyle\int_{\R^N}\left|a(x)\right|\left|u_j\right|^{p_{\alpha}^*}dx\right)^{\frac{pt}{p_{\alpha}^*}}\left(\displaystyle\int_{\R^N}a(x)\left|u_j\right|^{s}dx\right)^{\frac{p\left(1-t\right)}{s}}.
\end{align}
It follows from \eqref{5.1} that
 \begin{align*}
 \displaystyle\int_{\R^N}\left|a(x)\right|\left|u_j\right|^{p_{\alpha}^*}dx\leq \left\|a\right\|_{L^{\infty}\left(\O\right)}\displaystyle\int_{\R^N}\left|u_j(x)\right|^{p_{\alpha}^*}dx\leq C\left\|u_j\right\|^{pt}_{W^{\alpha, p}\left(\R^N\right)}\left\|u_j\right\|^{p\left(1-t\right)}_{W^{\beta, q}\left(\R^N\right)}\,\,\text{with}\,\,\ C>0.
 \end{align*}
 Therefore, we have
\begin{align}\label{3.31}
\Phi\left(u_j\right)&\geq \dfrac{\displaystyle\int_{\R^N}\displaystyle\int_{\R^N}\dfrac{\left|u_j(x) - u_j(y)\right|^p}{\left|x-y\right|^{N+\alpha p}}dxdy +\dfrac{p}{q}\displaystyle\int_{\R^N}\displaystyle\int_{\R^N}\dfrac{\left|u_j(x) - u_j(y)\right|^q}{\left|x-y\right|^{N+\beta q}}dxdy}{C\left\|u_j\right\|^{pt}_{W^{\alpha, p}\left(\R^N\right)}\left\|u_j\right\|^{p\left(1-t\right)}_{W^{\beta, q}\left(\R^N\right)}}\nonumber\\&\geq
C_1\dfrac{\left\|u_j\right\|^p_{W^{\alpha, p}\left(\R^N\right)}+ \left\|u_j\right\|^q_{W^{\beta, q}\left(\R^N\right)}}{\left\|u_j\right\|^{pt}_{W^{\alpha, p}\left(\R^N\right)}\left\|u_j\right\|^{p\left(1-t\right)}_{W^{\beta, q}\left(\R^N\right)}}.
\end{align}
Note that $\left\|u\right\|_{\CW} = \left\|u\right\|_{\CW^{\alpha, p}\left(\R^N\right)}+ \left\|u\right\|_{\CW^{\beta, q}\left(\R^N\right)}$. Assume that $\left\{u_j\right\}$ is not bounded, so for every $M>0$, there exists a $j_M\geq M$ such that $\left\|u_{j_M}\right\|_{\CW}\geq M$. Consequently, there exists a subsequence $\left\{u_{j_k}\right\}$ satisfies $\lim\limits_{k\to+\infty}\left\|u_{j_k}\right\|_{\CW} =+\infty$ in $\R$, which together with \eqref{3.31} yields
\begin{align}\label{3.031}
\Phi\left(u_{j_k}\right)\geq C_2\left(\dfrac{\left\|u_{j_k}\right\|^{p\left(1-t\right)}_{W^{\alpha, p}\left(\R^N\right)}}{\left\|u_{j_k}\right\|^{p\left(1-t\right)}_{W^{\beta, q}\left(\R^N\right)}}+ \dfrac{\left\|u_{j_k}\right\|^{q-p\left(1-t\right)}_{W^{\beta, q}\left(\R^N\right)}}{\left\|u_{j_k}\right\|^{pt}_{W^{\alpha, p}\left(\R^N\right)}}\right),\,\,\,\forall j_k \geq k\geq 1.
\end{align}
Since $\left\|u_{j_k}\right\|_{W^{\alpha, p}\left(\R^N\right)} >\varepsilon,\,\,\,\forall j_k\geq k\geq\kappa\left(\varepsilon\right)$, we deduce
\begin{align}\label{3.032}
\Phi\left(u_{j_k}\right)\geq \left\{\begin{array}{ll}
C_3\left(\dfrac{\varepsilon^{p\left(1-t\right)}}{\left\|u_{j_{\kappa\left(\varepsilon\right)}}\right\|^{p\left(1-t\right)}_{W^{\beta, q}\left(\R^N\right)}}+ \dfrac{\varepsilon^{q-p\left(1-t\right)}}{\left\|u_{j_{\kappa\left(\varepsilon\right)}}\right\|^{pt}_{W^{\alpha, p}\left(\R^N\right)}}\right), \, &\mbox{if } q-p\left(1-t\right)\geq 0.  \smallskip\\
C_4\left(\dfrac{\varepsilon^{p\left(1-t\right)}}{\left\|u_{j_{\kappa\left(\varepsilon\right)}}\right\|^{p\left(1-t\right)}_{W^{\beta, q}\left(\R^N\right)}}+ \dfrac{1}{\left\|u_{j_{\kappa\left(\varepsilon\right)}}\right\|^{pt}_{W^{\alpha, p}\left(\R^N\right)}}\dfrac{1}{\left\|u_{j_{\kappa\left(\varepsilon\right)}}\right\|^{p\left(1-t\right)-q}_{W^{\alpha, p}\left(\R^N\right)}}\right),&\mbox{if } q-p\left(1-t\right)< 0.
\end{array}\right.
\end{align}
Combining \eqref{3.031}, \eqref{3.032} and passing to the limit as $k\to+\infty$, we derive a contradiction. This confirms that $\left\{u_j\right\}$ is bounded in $\R^N$. Obviously, the space $\CW = W^{\alpha, p}\left(\R^N\right)\bigcap W^{\beta, q}\left(\R^N\right)$ is reflexive, then there exist a $\overline{u}\in \CW,\, \overline{u}\geq  0$ such that  $u_j$ weakly converge to $\overline{u}$ in $\CW^*\bigcap L^p\left(\R^N\right)$ ($\CW^*$ is a dual of $\CW$). Next, we consider a linear functional 
$T: W^{\alpha, p}\left(\R^N\right)\to \R$ with $T(v)(x) = \left(-\Delta_p\right)^{\alpha}v(x)\varphi(x)$, for all $\varphi \in C_c^{\infty}\left(\R^N\right)$. Similarly, we also consider a linear function $\widetilde{T}: W^{\beta, q}\left(\R^N\right)\to \R$ with $\widetilde{T}(u)(x) = \left(-\Delta_q\right)^{\beta}u(x)\varphi(x)$, for all $\varphi \in C_c^{\infty}\left(\R^N\right)$. Thus, there exists a linear functional $\Gamma: \CW\to \R$ such that
\begin{align*}
\Gamma = T_{|\CW} = \widetilde{T}_{|\CW}.
\end{align*}
Since $u_j$ weakly converge to $\overline{u}$, we get $\lim\limits_{j\to+\infty}\Gamma\left(u_j\right) = \Gamma\left(\overline{u}\right)$ in $\CW$. Applying standard Fatou's Lemma, we can see that
\begin{align*}
&\left|\lim\limits_{m\to+\infty}\inf\limits_{j\geq m}\displaystyle\int_{\R^N}\displaystyle\int_{\R^N}\dfrac{\left|u_j(x) - u_j(y)\right|^{p-2}\left(u_j(x)-u_j(y)\right)}{\left|x-y\right|^{N+\alpha p}}\left(\varphi(x)-\varphi(y)\right)dxdy\right|\\&\geq 
\left|\displaystyle\int_{\R^N}\displaystyle\int_{\R^N}\lim\limits_{m\to+\infty}\inf\limits_{j\geq m}\dfrac{\left|u_j(x) - u_j(y)\right|^{p-2}\left(u_j(x)-u_j(y)\right)}{\left|x-y\right|^{N+\alpha p}}\left(\varphi(x)-\varphi(y)\right)dxdy\right|\\& \geq
\left|\displaystyle\int_{\R^N}\displaystyle\int_{\R^N}\dfrac{\left|\overline{u}(x) - \overline{u}(y)\right|^{p-2}\left(\overline{u}(x)-\overline{u}(y)\right)}{\left|x-y\right|^{N+\alpha p}}\left(\varphi(x)-\varphi(y)\right)dxdy\right|.
\end{align*}
Hence
\begin{align*}
\left\|T\left(\overline{u}\right)\right\|_{W^{\alpha, p}\left(\R^N\right)}\leq C_4 \lim\limits_{m\to+\infty}\inf\limits_{j\geq m}\left\|\varphi\right\|_{W^{\alpha, p}\left(\R^N\right)}\left\|u\right\|^{p-1}_{W^{\alpha, p}\left(\R^N\right)}.
\end{align*}
By the properties of dual space, then for all $\varphi\in C_c^{\infty}\left(\R^N\right)$, we have 
\begin{align*}
\left[\overline{u}\right]_{\alpha, p}\leq  C_4 \lim\limits_{m\to+\infty}\inf\limits_{j\geq m}\left\|\varphi\right\|_{W^{\alpha, p}\left(\R^N\right)}\left\|u\right\|^{p-1}_{W^{\alpha, p}\left(\R^N\right)}.
\end{align*}
We can chose $\varphi$ satisfying $C_4\left\|\varphi\right\|_{W^{\alpha, p}\left(\R^N\right)}<1$, and
\begin{align*}
\left[\overline{u}\right]_{\alpha, p}\leq   \lim\limits_{m\to+\infty}\inf\limits_{j\geq m}\left[u_j\right]_{\alpha, p}.
\end{align*}
Similarly, $\left[\overline{u}\right]_{\beta, q}\leq   \lim\limits_{m\to+\infty}\inf\limits_{j\geq m}\left[u_j\right]_{\beta, q}$.
We have $\lim\limits_{j\to\infty} u_j = \overline{u}$ in $L^p\left(\R^N\right)$, and $\lim\limits_{j\to\infty} u_j = \overline{u}$ a.e in $\R^N$. Hence, 
\begin{align*}
\lim\limits_{j\to\infty}\displaystyle\int_{\R^N}a(x)\left|u_j\right|^pdx = \displaystyle\int_{\R^N}a(x)\left|\overline{u}\right|^pdx,
\end{align*}
which implies
\begin{align}
\lim\limits_{m\to+\infty}\inf\limits_{j\geq m}\Phi\left(u_j\right)&= \dfrac{\displaystyle\int_{\R^N}\displaystyle\int_{\R^N}\dfrac{\left|u_j(x) - u_j(y)\right|^p}{\left|x-y\right|^{N+\alpha p}}dxdy +\dfrac{p}{q}\displaystyle\int_{\R^N}\displaystyle\int_{\R^N}\dfrac{\left|u_j(x) - u_j(y)\right|^q}{\left|x-y\right|^{N+\beta q}}dxdy}{\displaystyle\int_{\R^N}a(x)\left|u_j\right|^pdx}\nonumber\\& \geq
\dfrac{\lim\limits_{m\to+\infty}\inf\limits_{j\geq m}\displaystyle\int_{\R^N}\displaystyle\int_{\R^N}\dfrac{\left|u_j(x) - u_j(y)\right|^p}{\left|x-y\right|^{N+\alpha p}}dxdy +\dfrac{p}{q}\lim\limits_{m\to+\infty}\inf\limits_{j\geq m}\displaystyle\int_{\R^N}\displaystyle\int_{\R^N}\dfrac{\left|u_j(x) - u_j(y)\right|^q}{\left|x-y\right|^{N+\beta q}}dxdy}{\lim\limits_{m\to+\infty}\sup\limits_{j\geq m}\displaystyle\int_{\R^N}a(x)\left|u_j\right|^pdx}\nonumber\\&\geq
\dfrac{\displaystyle\int_{\R^N}\displaystyle\int_{\R^N}\dfrac{\left|\overline{u}(x) - \overline{u}(y)\right|^p}{\left|x-y\right|^{N+\alpha p}}dxdy +\dfrac{p}{q}\displaystyle\int_{\R^N}\displaystyle\int_{\R^N}\dfrac{\left|\overline{u}(x) - \overline{u}(y)\right|^q}{\left|x-y\right|^{N+\beta q}}dxdy}{\displaystyle\int_{\R^N}a(x)\left|\overline{u}\right|^pdx}.
\end{align}
Therefore, we deduce that $\lambda_1\left(\alpha, \beta, p, q,\, \R^N\right)\geq \lim\limits_{m\to+\infty}\inf\limits_{j\geq m}\Phi\left(u_j\right)\geq \Phi\left(\overline{u}\right)\geq \lambda_1\left(\alpha, \beta, p, q,\, \R^N\right)$ since $\overline{u}\in \CW$. Hence we infer that $\lambda_1\left(\alpha, \beta, p, q,\, \R^N\right) =\Phi\left(\overline{u}\right)$  and consequently we conclude that $\overline{u}$ is a critical point of $\Phi$, i.e.
\begin{align}\label{3.32}
\Phi^{\prime}\left(\overline{u}\right)(v) = 0,\,\,\,\forall v\in \CW\setminus\left\{0\right\}.
\end{align}
Setting $\Upsilon\left(u\right) = \displaystyle\int_{\R^N}\displaystyle\int_{\R^N} \dfrac{\left|u(x)-u(y)\right|^p}{|x-y|^{N+\alpha p}}dxdy$. It is easy to check that 
\begin{align*}
\Upsilon^{\prime}\left(u\right)(v) = p\displaystyle\int_{\R^N}\displaystyle\int_{\R^N} \dfrac{\left|u(x)-u(y)\right|^{p-2}\left(u(x) - u(y)\right)\left(v(x)-v(y)\right)}{|x-y|^{N+\alpha p}}dxdy.
\end{align*}
Similar, 
\begin{align*}
\widetilde{\Upsilon }^{\prime}\left(u\right)(v) = q\displaystyle\int_{\R^N}\displaystyle\int_{\R^N} \dfrac{\left|u(x)-u(y)\right|^{q-2}\left(u(x) - u(y)\right)\left(v(x)-v(y)\right)}{|x-y|^{N+\beta q}}dxdy.
\end{align*}
Furthermore, we also have $\Theta^{\prime}(u)(v) = p\displaystyle\int_{\R^N}a(x)\left|u\right|^{p-2}u(x)v(x)dx$, within $\Theta(u)= p\displaystyle\int_{\R^N}a(x)\left|u\right|^{p}dx$. Therefore, the equation \eqref{3.32} is equivalent to
\begin{align}
\Phi^{\prime}\left(\overline{u}\right)(v) =  \dfrac{\left[1/p\Upsilon^{\prime}\left(\overline{u}\right)(v)+ 1/q \widetilde{\Upsilon }^{\prime}\left(\overline{u}\right)(v)\right]1/p\Theta (\overline{u}) -\Theta^{\prime}(u)(v)\left(1/p\Upsilon\left(\overline{u}\right)+1/q\widetilde{\Upsilon }\left(\overline{u}\right) \right) }{\left(1/p\Theta (\overline{u})\right)^2}.
\end{align}
This together with \eqref{3.32}, yields
\begin{align*}
\dfrac{1}{p}\Upsilon^{\prime}\left(\overline{u}\right)(v)+ \dfrac{1}{q}\widetilde{\Upsilon }\left(\overline{u}\right)(v) = \lambda_1\left(\alpha, \beta, p, q,\, \R^N\right)\dfrac{ \Theta^{\prime}(\overline{u})(v)}{p}.
\end{align*}
This means that $ \lambda_1\left(\alpha, \beta, p, q, \, \R^N\right)$ is an eigenvalue of \eqref{generality} associated to eigenfunction $\overline{u}\geq 0$. We claim that $\overline{u}\neq0$. Arguing indirectly we assume that this does not hold, and hence, $\overline{u} = 0$. Thus, we have that $\lim\limits_{j\to+\infty}\displaystyle\int_{\R^{N}}a(x)\left|u_j\right|^pdx = \displaystyle\int_{\R^{N}}a(x)\left|\overline{u}\right|^pdx=0$. Consequently, we have
\begin{align*}
\lim\limits_{j\to+\infty}\left(\dfrac{1}{p}\Upsilon\left(u_j\right)+ \dfrac{1}{q}\widetilde{\Upsilon}\left(u_j\right)\right) = \lim\limits_{j\to+\infty}\Phi\left(u_j\right)\dfrac{\Theta\left(u_j\right)}{p}=0,
\end{align*}
which implies
\begin{align}\label{3.34}
\lim\limits_{j\to+\infty}\left[u_j\right]_{W^{\alpha, p}\left(\R^N\right)}=\lim\limits_{j\to+\infty}\left[u_j\right]_{W^{\beta, q}\left(\R^N\right)}=0,
\end{align}
and
\begin{align}\label{3.35}
&\left\|u_j\right\|_{W^{\alpha, p}\left(\R^N\right)}\leq C_p\left[u_j\right]_{W^{\alpha, p}\left(\R^N\right)},\nonumber\\&
\left\|u_j\right\|_{W^{\beta, q}\left(\R^N\right)}\leq C_q\left[u_j\right]_{W^{\beta, q}\left(\R^N\right)}.
\end{align}
Combining Eqs. \eqref{3.34} and \eqref{3.35}, we deduce $\lim\limits_{j\to+\infty}\left\|u_j\right\|_{W^{\alpha, p}\left(\R^N\right)}= \lim\limits_{j\to+\infty}\left\|u_j\right\|_{W^{\beta, q}\left(\R^N\right)} =0.$
Consequently,
\begin{align*}
\left\|u_j\right\|^{p}_{W^{\beta, q}\left(\R^N\right)}\leq \left\|u_j\right\|^{p}_{W^{\alpha, p}\left(\R^N\right)}\left\|u_j\right\|^{\frac{p-q}{t}}_{W^{\beta, q}\left(\R^N\right)},\,\,\,\forall k\geq \kappa(j),
\end{align*}
where $\varepsilon = \left\|u_j\right\|^{p}_{\CW^{\alpha, p}\left(\R^N\right)}\left\|u_j\right\|^{\frac{p-q}{t}}_{\CW^{\beta, q}\left(\R^N\right)}$. Passing subsequence and combining \eqref{3.31}, we get
\begin{align}
\Phi\left(u_j\right)&\geq C_1 \dfrac{\left\|u_j\right\|^{p-\frac{p-q}{t}}_{W^{\alpha, p}\left(\R^N\right)}+ \left\|u_j\right\|^q_{W^{\beta, q}\left(\R^N\right)}}{\left\|u_j\right\|^{pt}_{W^{\alpha, p}\left(\R^N\right)}\left\|u_j\right\|^{p\left(1-t\right)}_{W^{\beta, q}\left(\R^N\right)}}\\&\geq C_1\dfrac{1}{\left\|u_j\right\|^{pt}_{W^{\alpha, p}\left(\R^N\right)}} \dfrac{1}{\left\|u_j\right\|^{p-q-pt^2}_{W^{\beta, q}\left(\R^N\right)}}\left(1+\left\|u_j\right\|^{\frac{-\left(1-t\right)\left(q-p\right)}{t}}_{W^{\beta, q}\left(\R^N\right)}\right).
\end{align}
Since $t<\sqrt{\dfrac{p-q}{p}}$, we deduce $p-q-t^2p>0$. This yields that $\lim\limits_{j\to+\infty}\Phi\left(u_j\right) = +\infty$ in $\R$. We get a contradiction. This shows that $\overline{u}\neq 0$ and $\lambda_1\left(\alpha, \beta, p, q,\, \R^N\right)$ is a principal eigenvalue of problem \eqref{generality}.

Now, we prove that $\lambda_1\left(\alpha,\beta, p, q,\, \R^N\right)$ is simple. By the proof above, we may assume that $u > 0$ and $v > 0$. Our claim is that $u(x) = Cv(x)$ with $C>0$.
Normalize the functions so that
\begin{align}\label{normalize}
\displaystyle\int_{\O}a(x) u^{p}dx =\displaystyle\int_{\O}a(x)v^{p}dx=1
\end{align}
and consider the admissible function
\begin{align*}
w_1 = \left(\dfrac{u^p+v^p}{2}\right)^{1/p}, w_2 = \left(\dfrac{u^q+v^q}{2}\right)^{1/q}.
\end{align*}
Thanks to the inequalities
\begin{align}\label{2.032}
\left|w_i(x) - w_i(y)\right|^p\leq \dfrac{1}{2}\left|u(x)-u(y)\right|^p + \dfrac{1}{2}\left|v(x)-v(y)\right|^p,\,\,\ i=1,2
\end{align}
and incorporating the eigenvalue of problem\eqref{generality}, we obtain
\begin{align}\label{3.28}
\lambda_1\left(\alpha,\beta, p, q,\, \R^N\right) \leq  \dfrac{1}{2}\dfrac{\Gamma\left(J_p\left(w_i(x), w_i(y)\right)\right)}{\displaystyle\int_{\R^N}a(x) \left|w_1\right|^{p}dx} + \dfrac{1}{2}\dfrac{\Gamma\left(J_q\left(w_2(x), w_2(y)\right)\right)}{\displaystyle\int_{\R^N}a(x) \left|w_2\right|^{q}dx},
\end{align}
where $J_r\left(w_i(x), w_i(y)\right) = \left|w_i(x)^{\frac{1}{r}}- w_i(y)^{\frac{1}{r}}\right|^r$,  $r = p, q$, $s =  \alpha ,\beta$, $i=1,2$, and 
\begin{align*}
\Gamma\left(J_r\left(w_i(x), w_i(y)\right)\right) =  \dfrac{\displaystyle\int_{\R^{N}}\displaystyle\int_{\R^{N}}J_r\left(w_i(x), w_i(y)\right)dxdy}{\left|x-y\right|^{N+s r}}.
\end{align*}
Using the fact that $\displaystyle\int_{\O} w_i^{p}dx  = 1$, within $i=1,2$. This together with \eqref{normalize}, \eqref{3.28} yields that
\begin{align*}
\lambda_1\left(\alpha,\beta, p, q,\, \R^N\right) \leq \dfrac{1}{2}\lambda_1\left(\alpha,\beta, p, q,\, \R^N\right)+ \dfrac{1}{2}\lambda_1\left(\alpha,\beta, p, q,\, \R^N\right) = \lambda_1\left(\alpha,\beta, p, q,\, \R^N\right).
\end{align*}
Thus, we proved that $u(x) = C v(x)$ with $C\in\R$, as claimed.
\end{proof}

\subsection{ A continuous family of eigenvalues}
Let $I\left(u\right):\CW\to\R$ be the energy functional associated to Eq. \eqref{generality} deﬁned by
\begin{align}\label{5.14}
I\left(u\right) &= \dfrac{1}{p}\displaystyle\int_{\R^N}\displaystyle\int_{\R^N}\dfrac{\left|u(x) - u(y)\right|^p}{\left|x-y\right|^{N+\alpha p}}dxdy +\dfrac{1}{q}\displaystyle\int_{\R^N}\displaystyle\int_{\R^N}\dfrac{\left|u(x) - u(y)\right|^q}{\left|x-y\right|^{N+\beta q}}dxdy- \dfrac{\lambda}{p}\displaystyle\int_{\R^N}a(x)\left|u(x)\right|^pdx.
\end{align}
From the embedding inequalities and assumptions \eqref{1.11}, we see that the functional $I$ is well deﬁned and $I\in C^1\left(\CW, \R\right)$ with
\begin{align}\label{5.15}
\left(I^{\prime}(u),\varphi\right) &=  \displaystyle\int_{\R^N}\displaystyle\int_{\R^N}\left[\dfrac{\left|u(x) - u(y)\right|^{p-2}}{\left|x-y\right|^{N+\alpha p}} + \dfrac{\left|u(x) - u(y)\right|^{q-2}}{\left|x-y\right|^{N+\beta q}}\right]\left(u(x)-u(y)\right)\left(\varphi(x)-  \varphi(y)\right)dxdy \nonumber\\& \hspace{8cm}-\lambda\displaystyle\int_{\R^N}a(x)\left|u\right|^{p-2} u\varphi dx.
\end{align}
The results of this subsection are stated in the following propositions.
\begin{proposition}\label{proposition_4.2}
Assume that \eqref{1.11} holds. For any $\lambda> \lambda_1\left(\alpha, \beta, p, q\right)$ is an eigenvalue of problem \eqref{generality}  then there exist a continuous set of positive eigenvalues.
\end{proposition}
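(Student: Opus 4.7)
The plan is to apply the mountain pass theorem (Lemma \ref{lem_2.4}) to the functional $I$ introduced in \eqref{5.14} for each fixed $\lambda > \lambda_1(\alpha,\beta,p,q)$. Since every critical point of $I$ is, by \eqref{5.15}, a nontrivial weak solution of \eqref{generality}, producing one critical point for each such $\lambda$ exhibits the continuous family of positive eigenvalues $(\lambda_1(\alpha,\beta,p,q),+\infty)$.

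First I would check the mountain pass geometry. Clearly $I(0)=0$. Splitting $|u|^p = |u|^{pt}\cdot|u|^{p(1-t)}$ exactly as in \eqref{5.1} and exploiting assumption \eqref{1.11}, one obtains the interpolation estimate
\begin{equation*}
\int_{\R^N} a(x)|u|^p\,dx \;\leq\; C\,\|u\|_{W^{\alpha,p}(\R^N)}^{pt}\,\|u\|_{W^{\beta,q}(\R^N)}^{p(1-t)},
\end{equation*}
for some $t\in\bigl(0,\sqrt{(p-q)/p}\bigr)$. Because $p-q-pt^2>0$, the right-hand side is of strictly higher order than the leading $q$-term in $I$ when $\|u\|_\CW$ is small, so the positive seminorm contributions dominate and there exist $\rho,\delta>0$ with $I(u)\geq\delta$ whenever $\|u\|_\CW=\rho$. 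For the second geometric hypothesis I would test with a dilate of the principal eigenfunction $\overline{u}$ from Proposition \ref{proposition_4.1}, normalized by $\int_{\R^N}a(x)|\overline{u}|^p\,dx=1$. Using the variational characterization \eqref{eigenvalue}, a direct computation yields
\begin{equation*}
I(t\overline{u}) \;=\; \frac{t^{p}\bigl(\lambda_1(\alpha,\beta,p,q)-\lambda\bigr)}{p} \;+\; \frac{[\overline{u}]_{\beta,q}^{q}}{q}\bigl(t^{q}-t^{p}\bigr),
\end{equation*}
which tends to $-\infty$ as $t\to\infty$ because $\lambda>\lambda_1(\alpha,\beta,p,q)$ and $q<p$. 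Picking $t_0$ large gives $e:=t_0\overline{u}$ with $\|e\|_\CW>\rho$ and $I(e)<0$.

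The main obstacle will be verifying a Palais--Smale condition at the minimax level $c$, because $\R^N$ is unbounded and the embeddings $W^{\alpha,p}(\R^N)\hookrightarrow L^r(\R^N)$, $W^{\beta,q}(\R^N)\hookrightarrow L^r(\R^N)$ are not compact. For a $(\mathrm{PS})_c$ sequence $\{u_n\}\subset\CW$, boundedness follows from combining $I(u_n)\to c$ with $\langle I'(u_n),u_n\rangle/p\to 0$; their difference produces a term proportional to $(1/q-1/p)[u_n]_{\beta,q}^q$ which, controlled against the interpolation bound above (using $p-q-pt^2>0$), confines $\{u_n\}$ to a bounded set in $\CW$. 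After passing to a subsequence with $u_n\rightharpoonup u$ in $\CW$, I would invoke the concentration--compactness principle of P.L. Lions \cite{PLL}, in the fractional $p\&q$ form used by \cite{AAI}, applied to the tight nonnegative measures $|u_n(x)-u_n(y)|^p|x-y|^{-N-\alpha p}\,dx\,dy$ and $|u_n(x)-u_n(y)|^q|x-y|^{-N-\beta q}\,dx\,dy$; the strict subcriticality $p<q_\beta^*$ together with the integrability hypothesis \eqref{1.11} on $a$ rules out both vanishing and dichotomy and prevents mass from escaping to infinity. Combined with the Brezis--Lieb identity (Lemma \ref{lem.2.1}) and the monotonicity inequalities \eqref{3.13}--\eqref{3.14} applied to both fractional operators, this upgrades weak convergence to strong convergence in $\CW$.

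Once the $(\mathrm{PS})_c$ condition is established, Lemma \ref{lem_2.4} produces a critical point $u_\lambda\in\CW$ with $I(u_\lambda)=c\geq\delta>0$, hence $u_\lambda\not\equiv 0$. Replacing $u_\lambda$ by $|u_\lambda|$ (which decreases both Gagliardo seminorms while leaving $\int_{\R^N} a(x)|u|^p\,dx$ invariant) yields a nonnegative eigenfunction, and the same argument as in the simplicity proof of Proposition \ref{proposition_4.1} forces strict positivity. Since $\lambda>\lambda_1(\alpha,\beta,p,q)$ was arbitrary, the eigenvalue set of \eqref{generality} contains the continuous interval $(\lambda_1(\alpha,\beta,p,q),+\infty)$, which is the claim.
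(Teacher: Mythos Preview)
Your proposal is correct and follows essentially the same route as the paper: establish the mountain pass geometry for $I$ via the interpolation estimate \eqref{5.1} (this is Lemma \ref{lem_4.4}) and the Palais--Smale condition via the identity $I(u_n)-\tfrac{1}{p}\langle I'(u_n),u_n\rangle=(\tfrac{1}{q}-\tfrac{1}{p})[u_n]_{\beta,q}^q$ together with Brezis--Lieb (this is Lemma \ref{lem_4.3}), then invoke Lemma \ref{lem_2.4}. The only cosmetic differences are that the paper tests with the first eigenfunction of the fractional $p$-Laplacian with weight $a$ rather than with the $(p,q)$-eigenfunction $\overline{u}$ of Proposition \ref{proposition_4.1} for the second geometric condition, and the paper does not include your final positivity step.
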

The idea of the proof is inspired by [\cite{BZ1}, Theorem 1]. The proof of proposition \ref{proposition_4.2} will be divided into several lemmas. We first state the following Palais-Smale condition.
\begin{lemma}\label{lem_4.3}
Suppose that \eqref{1.11} holds. The functional $I$ satisfies the Palais-Smale condition $\left(PS\right)_c$ for any $c\in\R$.
\end{lemma}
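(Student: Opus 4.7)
The plan is to verify the Palais--Smale condition by performing three steps adapted to the non-compact whole-space setting: (i) prove a priori boundedness of any $(PS)_c$ sequence $\{u_n\}\subset\CW$; (ii) extract a weakly convergent subsequence $u_n\rightharpoonup u$ and establish the vanishing of the weighted coupling term $\int_{\R^N} a(x)|u_n|^{p-2}u_n(u_n-u)\,dx$; (iii) conclude strong convergence via the $(S_+)$-type monotonicity inequalities that were already exploited in the proof of Lemma~\ref{lem.3.2}.

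For step (i), from $I(u_n)\to c$ and $\langle I'(u_n),u_n\rangle=o(\|u_n\|_{\CW})$, the combination $pI(u_n)-\langle I'(u_n),u_n\rangle$ equals $\tfrac{p-q}{q}[u_n]^q_{\beta,q}$, which immediately gives the sublinear bound $[u_n]^q_{\beta,q}\leq C(1+\|u_n\|_{\CW})$. To control $[u_n]^p_{\alpha,p}$ I reinsert the relation $[u_n]^p_{\alpha,p}+[u_n]^q_{\beta,q}=\lambda\int_{\R^N} a|u_n|^p\,dx+o(\|u_n\|_{\CW})$ and apply the H\"older interpolation
\[
\int_{\R^N} a(x)|u_n|^p\, dx \;\leq\; C\, \|u_n\|_{W^{\alpha,p}(\R^N)}^{pt}\, \|u_n\|_{W^{\beta,q}(\R^N)}^{p(1-t)},
\]
derived in the proof of Proposition~\ref{proposition_4.1} from assumption \eqref{1.11} together with the embeddings into $L^{p_\alpha^*}$ and $L^{q_\beta^*}$. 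The constraint $t\in(0,\sqrt{(p-q)/p})$ then makes the resulting polynomial inequality in the pair $(\|u_n\|_{W^{\alpha,p}},\|u_n\|_{W^{\beta,q}})$ sublinear in the dominant monomial, so the divergence of $\|u_n\|_{\CW}$ is ruled out by the same growth comparison carried out in \eqref{3.031}--\eqref{3.032}.

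For step (ii), reflexivity of $\CW$ yields a subsequence with $u_n\rightharpoonup u$ in $\CW$, $u_n\to u$ in $L^r_{\mathrm{loc}}(\R^N)$ for $r\in[1,p_\alpha^*)$, and a.e.\ pointwise. Splitting the weighted integral as $\int_{B_R}+\int_{\R^N\setminus B_R}$, the interior piece converges by dominated convergence combined with the local compactness recorded in Remark~\ref{remark_1}, while the exterior piece is majorized by $\|a\|_{L^{(q_\beta^*/s)'}(\R^N\setminus B_R)}\|u_n\|_{L^{q_\beta^*}}^{p-1}\|u_n-u\|_{L^{q_\beta^*}}$ and hence becomes arbitrarily small uniformly in $n$ thanks to \eqref{1.11}. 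Testing $I'(u_n)\to 0$ against $u_n-u$ and using weak convergence to annihilate $A_p(u,u_n-u)+B_q(u,u_n-u)$ then gives
\[
\bigl[A_p(u_n,u_n-u)-A_p(u,u_n-u)\bigr]+\bigl[B_q(u_n,u_n-u)-B_q(u,u_n-u)\bigr]\to 0.
\]
Both bracketed quantities are non-negative, so applying the Simon-type inequalities \eqref{3.13}--\eqref{3.14} componentwise (distinguishing the superquadratic and subquadratic ranges of $p,q$ exactly as in Lemma~\ref{lem.3.2}) yields $[u_n-u]_{\alpha,p}\to 0$ and $[u_n-u]_{\beta,q}\to 0$, hence $u_n\to u$ in $\CW$.

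The principal obstacle is step (ii): in contrast with the bounded-domain setting of Lemma~\ref{lem.3.2}, the embeddings $\CW\hookrightarrow L^p(\R^N)$ and $\CW\hookrightarrow L^q(\R^N)$ are not compact, and a portion of the mass of $\int a|u_n|^p$ could in principle escape to infinity. The integrability hypothesis \eqref{1.11} is tailored precisely to preclude this escape through the tail estimate sketched above; equivalently, as announced in the introduction, one can invoke the concentration-compactness principle of P.L.~Lions \cite{PLL} to rule out loss of mass at infinity in $L^p(\R^N,a\,dx)$. Once the compactness of $\{u_n\}$ in $L^p(\R^N,a\,dx)$ is secured, the monotonicity-based $(S_+)$ closure is essentially identical to that of Lemma~\ref{lem.3.2}.
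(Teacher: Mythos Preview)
Your boundedness argument (step (i)) coincides with the paper's: both compute $I(u_n)-\tfrac{1}{p}\langle I'(u_n),u_n\rangle=(\tfrac{1}{q}-\tfrac{1}{p})[u_n]^q_{\beta,q}$ and then feed the interpolation estimate \eqref{5.1} back into $\langle I'(u_n),u_n\rangle=o(\|u_n\|_{\CW})$. The paper phrases the conclusion slightly differently---once $[u_n]_{\beta,q}$ is under control it writes $\|u_n\|^p_{W^{\alpha,p}}\le L(1+M\|u_n\|^{pt}_{W^{\alpha,p}})$ and uses only $t<1$---but your appeal to the coupled growth comparison \eqref{3.031}--\eqref{3.032} is an equally valid closure.

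Where you genuinely diverge is in the passage from weak to strong convergence. The paper does \emph{not} test $I'(u_n)$ against $u_n-u$ and invoke the $(S_+)$ inequalities \eqref{3.13}--\eqref{3.14}; instead it first passes to the limit in $\langle I'(u_n),\varphi\rangle$ for fixed $\varphi$ to identify the weak limit $u$ as a weak solution of \eqref{generality}, then applies the Brezis--Lieb splitting (Lemma~\ref{lem.2.1}) to both seminorms, and finally takes $\varphi=u_n$ to force $[u_n-u]_{\alpha,p},\,[u_n-u]_{\beta,q}\to 0$. Your $(S_+)$ route has the advantage of recycling Lemma~\ref{lem.3.2} verbatim and of making the role of the tail estimate on $\int a|u_n|^{p-2}u_n(u_n-u)$ explicit; the paper's Brezis--Lieb route is shorter but leaves the compactness of $\{u_n\}$ in $L^p(\R^N,a\,dx)$ somewhat implicit. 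One correction to your write-up: the displayed tail bound $\|a\|_{L^{(q_\beta^*/s)'}(\R^N\setminus B_R)}\|u_n\|_{L^{q_\beta^*}}^{p-1}\|u_n-u\|_{L^{q_\beta^*}}$ is only a valid H\"older estimate when $s=p$, which is not assumed. The clean fix is to first write $\int_{\R^N\setminus B_R}a|u_n|^{p-1}|u_n-u|\le\bigl(\int_{\R^N\setminus B_R}a|u_n|^p\bigr)^{(p-1)/p}\bigl(\int_{\R^N\setminus B_R}a|u_n-u|^p\bigr)^{1/p}$ and then bound each factor via the interpolation \eqref{5.1} restricted to $\R^N\setminus B_R$, which produces the decaying factor $\|a\|_{L^{(q_\beta^*/s)'}(\R^N\setminus B_R)}^{p(1-t)/s}$.
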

\begin{proof}[\bf Proof]
Let $\left\{u_j\right\}\subset \CW$ be a $(PS)_c$ sequence of $I$. Therefore,
\begin{align}
I\left(u_j\right) = c+ o(1),\,\,\, I^{\prime}\left(u_j\right) = o(1).
\end{align}
Then, we have
\begin{align}\label{5.16}
I\left(u_j\right) &= \dfrac{1}{p}\displaystyle\int_{\R^N}\displaystyle\int_{\R^N}\dfrac{\left|u_j(x) - u_j(y)\right|^p}{\left|x-y\right|^{N+\alpha p}}dxdy +\dfrac{1}{q}\displaystyle\int_{\R^N}\displaystyle\int_{\R^N}\dfrac{\left|u_j(x) - u_j(y)\right|^q}{\left|x-y\right|^{N+\beta q}}dxdy- \dfrac{\lambda}{p}\displaystyle\int_{\R^N}a(x)\left|u_j(x)\right|^pdx = c+ o(1)
\end{align}
and 
\begin{align}\label{5.016}
\left(I^{\prime}\left(u_j\right), u_j\right)& = \displaystyle\int_{\R^N}\displaystyle\int_{\R^N}\left[\dfrac{\left|u_j(x) - u_j(y)\right|^{p-2}}{\left|x-y\right|^{N+\alpha p}} + \dfrac{\left|u_j(x) - u_j(y)\right|^{q-2}}{\left|x-y\right|^{N+\beta q}}\right]\left(u_j(x)-u_j(y)\right)\left(u_j(x)-u_j(y)\right)dxdy \\& \hspace{8cm}-\lambda\displaystyle\int_{\R^N}a(x)\left|u_j\right|^{p-2} uv dx.
\end{align}
So
\begin{align}\label{5.17}
o(1)\left\|u_j\right\|_{\CW} +o(1)& \geq I\left(u_j\right)-\dfrac{1}{p}\left(I^{\prime}\left(u_j\right), u_j\right)\nonumber\\&
= \dfrac{1}{p}\left[u_j\right]_{\alpha, p} + \dfrac{1}{q}\left[u_j\right]_{\beta, q}-\dfrac{\lambda}{p}\displaystyle\int_{\R^N}a(x)\left|u_j(x)\right|^pdx\nonumber\\&  - \left[\dfrac{1}{p}\left[u_j\right]_{\alpha, p} - \left(\dfrac{1}{q}-\dfrac{1}{p}\right)\left[u_j\right]_{\beta, q}-\dfrac{\lambda}{p}\displaystyle\int_{\R^N}a(x)\left|u_j(x)\right|^pdx\right]\nonumber\\&
= \left(\dfrac{1}{q}-\dfrac{1}{p}\right)\left[u_j\right]_{\beta, q}.
\end{align}
Combining \eqref{5.16}, \eqref{5.17} , we infer that $\left\{u_j\right\}$ is bounded in $W^{\beta, q}\left(\R^N\right)$. Next, we set
\begin{align*}
\widetilde{I}\left(u_j\right) = \displaystyle\int_{\R^N}\displaystyle\int_{\R^N}\dfrac{\left|u_j(x) - u_j(y)\right|^p}{\left|x-y\right|^{N+\alpha p}}dxdy- \lambda\displaystyle\int_{\R^N}a(x)\left|u_j\right|^{p} dx.
\end{align*}
It is easy to check that $\widetilde{I}\left(u_j\right)$ is a real convergent sequence. So, there
exists $M \geq 0$ such that
\begin{align}\label{4.23}
\displaystyle\int_{\R^N}\displaystyle\int_{\R^N}\dfrac{\left|u_j(x) - u_j(y)\right|^p}{\left|x-y\right|^{N+\alpha p}}dxdy- \lambda\displaystyle\int_{\R^N}a(x)\left|u_j\right|^{p} dx \leq M.
\end{align}
It follows by \eqref{5.1} and $\left\{u_j\right\}$ is bounded in $W^{\beta, q}\left(\R^N\right)$, we can find a positive constant $L$ such that
\begin{align}\label{24}
\left\|u_j\right\|_{W^{\alpha, p}\left(\R^N\right)}^{p}\leq L\left(1 +M \left\|u_j\right\|_{W^{\alpha, p}\left(\R^N\right)}^{pt}\right).
\end{align}
Since $\left\{\widetilde{I}\left(u_j\right)\right\}$ is bounded and \eqref{24} holds, this implies that $\left\{u_j\right\}$ is bounded in $W^{\alpha, p}\left(\R^N\right)$ . Hence we infer that $\left\{u_j\right\}$  is bounded in $\CW$. Consequently, up to a subsequence,  there exists $u\in \CW$ 
such that $u_j \rightharpoonup  u$ in $\CW$ and $u_n\to u$ 
in $L^r_{loc}\left(\R^N\right)$ for any $r\in\left[1, 
p_{\alpha}^*\right)$. Now, we aim to prove the strong 
convergence of $u_j$ to $u$ in $\CW$.
Setting $p^{\prime}=p/(p-1)$ and using the Hölder conjugate of $p$, we observe that the sequence
 \begin{align*}
 \dfrac{\left|u_j(x)-u_j(y)\right|^{p-2}\left(u_j(x)-u_j(y)\right)}{\left|x-y\right|^{\frac{N+\alpha p}{p'}}}
 \end{align*}
 is bounded in $L^{p^{\prime}}\left(\R^{2N}\right)$, and $\dfrac{\varphi(x)-\varphi(y)}{\left|x-y\right|^{\frac{N+\alpha p}{p}}}\in L^{p}\left(\R^{2N}\right)$. Thus,
 \begin{align*}
 \dfrac{\left|u_j(x)-u_j(y)\right|^{p-2}\left(u_j(x)-u_j(y)\right)\left(\varphi(x)-\varphi(y)\right)}{\left|x-y\right|^{\frac{N+\alpha p}{p'}}} \to \dfrac{\left|u(x)-u(y)\right|^{p-2}\left(u(x)-u(y)\right)\left(\varphi(x)-\varphi(y)\right)}{\left|x-y\right|^{N+\alpha p}}\,\,\text{a.e\, in}\,\, \R^{2N}.
 \end{align*}
 Similarly, the second integral in \eqref{5.15} converges to
 \begin{align*}
  \dfrac{\left|u(x)-u(y)\right|^{q-2}\left(u(x)-u(y)\right)\left(\varphi(x)-\varphi(y)\right)}{\left|x-y\right|^{N+\beta q}}.
 \end{align*}
 Furthermore
 \begin{align*}
 \displaystyle\int_{\R^N}a(x)\left|u_j\right|^{p-2} u_j\varphi dx \to \displaystyle\int_{\R^N}a(x)\left|u\right|^{p-2} u\varphi dx.
 \end{align*}
Then passing to the limit in \eqref{5.15} shows that $u\in \CW$ is a weak solution of \eqref{generality}. Setting $v_j= u_j-u$, we will show that $v_j\to 0$ in $\CW$. Applying Brezis-Lieb lemma, Lemma \ref{lem.2.1}, we can obtain 
\begin{align*}
\left\|u_j\right\|_{W^{\alpha, p}\left(\R^N\right)}^p = \left\|v_j\right\|_{W^{\alpha, p}\left(\R^N\right)}^p + \left\|u\right\|_{W^{\alpha, p}\left(\R^N\right)}^p +o(1).
\end{align*}
Similarly, we have
\begin{align*}
\left\|u_j\right\|_{W^{\beta, q}\left(\R^N\right)}^q = \left\|v_j\right\|_{W^{\beta, q}\left(\R^N\right)}^q + \left\|u\right\|_{W^{\beta, q}\left(\R^N\right)}^q +o(1).
\end{align*}
Taking $\varphi=u_j$. Consequently, $u_j\to u$ in $\CW$ as $j\to \infty$. This ends the proof of lemma.
\end{proof}
Next, we will show that the functional $I$ given by \eqref{5.14} satisfies the Mountain
pass geometry.
\begin{lemma}\label{lem_4.4}
The functional $I$ satisﬁes the following conditions:
\begin{enumerate}
\item there exist $\tau ,\mu  > 0$ such that $I(u)\geq \tau$ with $\left\|u\right\|_{\CW} = \mu$,
\item there exists $u_0\in \CW$ with $\left\|u_0\right\|_{\CW}> \mu$ such that $I\left(u_0\right)<0$.
\end{enumerate}
\end{lemma}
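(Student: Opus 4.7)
The plan is to verify the two Mountain-Pass geometric conditions, invoking the interpolation apparatus already developed in Proposition \ref{proposition_4.1} for (1) and the positive principal eigenfunction $\varphi_1$ from the same proposition for (2).

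For (1), I would bound the weighted term in $I(u)$ by applying the H\"older-type estimate already used to derive \eqref{5.1}, together with the Sobolev embeddings $|u|_{p_\alpha^*}\leq C[u]_{\alpha,p}$ and $|u|_s\leq C\|u\|_{W^{\beta,q}(\R^N)}$, to obtain
\[
\int_{\R^N}a(x)|u|^p\,dx\;\leq\;C\,[u]_{\alpha,p}^{pt}\,\|u\|_{W^{\beta,q}(\R^N)}^{p(1-t)}.
\]
Since $pt+p(1-t)=p$, Young's inequality with conjugate weights $1/t$ and $1/(1-t)$ separates the mixed product into $t[u]_{\alpha,p}^p+(1-t)\|u\|_{W^{\beta,q}}^p$. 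The admissible range $t\in(0,\sqrt{(p-q)/p})$ prescribed by \eqref{1.11} may then be exploited on two fronts: choosing $t$ small enough to keep the coefficient of $[u]_{\alpha,p}^p$ positive (after subtracting $C\lambda t/p$), and exploiting the strict gap $p-q>0$ through the factorization $\|u\|_{W^{\beta,q}}^p=\|u\|_{W^{\beta,q}}^{p-q}\|u\|_{W^{\beta,q}}^q\leq\mu^{p-q}\|u\|_{W^{\beta,q}}^q$, which drives the negative contribution below $(2q)^{-1}\|u\|_{W^{\beta,q}}^q$ once $\mu$ is small. Assembling these ingredients produces constants $\tau,\mu>0$ with $I(u)\geq\tau$ on the sphere $\|u\|_{\CW}=\mu$.

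For (2), I would test with $u_0=t_0\varphi_1$, where $\varphi_1\in\CW$, $\varphi_1>0$, is the positive principal eigenfunction from Proposition \ref{proposition_4.1}, normalized by $\int_{\R^N}a(x)\varphi_1^p\,dx=1$. The variational identity \eqref{eigenvalue} then reads $[\varphi_1]_{\alpha,p}^p+(p/q)[\varphi_1]_{\beta,q}^q=\lambda_1$; since $\varphi_1\neq 0$ forces $[\varphi_1]_{\beta,q}^q>0$, this yields the strict bound $[\varphi_1]_{\alpha,p}^p<\lambda_1<\lambda$. A direct substitution gives
\[
I(t\varphi_1)\;=\;\frac{t^p}{p}\bigl([\varphi_1]_{\alpha,p}^p-\lambda\bigr)+\frac{t^q}{q}[\varphi_1]_{\beta,q}^q,
\]
whose $t^p$ coefficient is strictly negative; since $p>q$, this forces $I(t\varphi_1)\to-\infty$ as $t\to+\infty$, so any sufficiently large $t_0$ produces $u_0=t_0\varphi_1$ satisfying $\|u_0\|_{\CW}>\mu$ and $I(u_0)<0$.

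The main obstacle lies in part (1): on $\R^N$ the Gagliardo seminorms $[\cdot]_{\alpha,p}$ and $[\cdot]_{\beta,q}$ are not equivalent to the full norms $\|\cdot\|_{W^{\alpha,p}}$ and $\|\cdot\|_{W^{\beta,q}}$ (no global Poincar\'e inequality available), so one cannot simply absorb the negative weight contribution into the leading positive Gagliardo terms. The careful bookkeeping of exponents in the Young step, calibrated against the admissible range of $t$ enforced by \eqref{1.11} and the strict inequality $q<p$, is precisely what secures the positive lower bound for small $\mu$. Part (2), by contrast, is an immediate eigenvalue computation once $\varphi_1$ is in hand.
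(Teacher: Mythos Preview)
Part (2) is correct, and your test direction differs from the paper's: you use the principal eigenfunction $\varphi_1$ of the combined operator furnished by Proposition~\ref{proposition_4.1}, while the paper tests along the first eigenfunction of the fractional $p$-Laplacian with weight $a$ alone. Both choices produce a strictly negative $t^p$-coefficient under the standing hypothesis $\lambda>\lambda_1(\alpha,\beta,p,q)$, and your route is arguably more self-contained since it does not invoke a second eigenvalue problem.

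Part (1) has a genuine gap. A minor point first: $t$ is not a free parameter you may shrink---it is fixed by the integrability hypothesis \eqref{1.11} on $a$---though this is easily repaired by using the $\varepsilon$-Young inequality $AB\le\varepsilon A^{1/t}+C_\varepsilon B^{1/(1-t)}$ with a free $\varepsilon>0$. The substantive problem is that the obstacle you flag in your last paragraph is not actually overcome by the outline. After Young and the factorization $\|u\|_{W^{\beta,q}}^{p}\le\mu^{p-q}\|u\|_{W^{\beta,q}}^{q}$, your lower bound reads
\[
I(u)\;\ge\;c_1[u]_{\alpha,p}^p+c_2[u]_{\beta,q}^q-c_3\mu^{p-q}|u|_q^q,
\]
because $\|u\|_{W^{\beta,q}}^q=[u]_{\beta,q}^q+|u|_q^q$ carries an $L^q$ piece that the positive part of $I$ does not control. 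On the sphere $\|u\|_{\CW}=\mu$ one may dilate a fixed bump and renormalize so that both Gagliardo seminorms tend to zero while $|u|_q$ stays of order $\mu$; your inequality then yields only $I(u)\ge -c\mu^{p}$, not a positive $\tau$. The closing assertion that ``careful bookkeeping\dots\ secures the positive lower bound'' is precisely where a new input is needed and none is supplied. By contrast, the paper does not pass through Young at all: it opens by \emph{asserting} that $\|\cdot\|_{W^{\alpha,p}(\R^N)}$ and $[\cdot]_{\alpha,p}$ are equivalent (exactly the Poincar\'e-type ingredient you say is unavailable), writes $\|u\|_{\CW}=[u]_{\alpha,p}+[u]_{\beta,q}=\mu_1+\mu_2$, factors $I(u)\ge\tfrac1p\mu_1^{pt}\bigl(\mu_1^{p(1-t)}-C\mu_2^{p(1-t)}\bigr)$, and then selects $\mu_1,\mu_2$ explicitly to force the bracket above $1$. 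Whatever one makes of that equivalence claim, it is the paper's device for bypassing the obstacle, and your proposal furnishes no analogue.
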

\begin{proof}[\bf Proof]
It is notice that $\left\|\cdot\right\|_{W^{\alpha, p}\left(\R^N\right)}$ and $\left[\cdot\right]_{\alpha, p}$ are equivalent. It follows from \eqref{5.1}, we have
\begin{align}
\displaystyle\int_{\R^N} \left|a(x)\right|\left|u_j\right|^{p}dx &\leq C\left\|a\right\|_{L^{+\infty}\left(\R^N\right)}^{\frac{pt}{p_{\alpha}^*}}\left\|a\right\|_{L^{\left(\frac{q_{\beta}^*}{s}\right)'}\left(\R^N\right)}^{\frac{p(1-t)}{s}}\left[u\right]_{\alpha, p}^{pt}\left[u\right]_{\beta, q}^{q\left(1-t\right)}.
\end{align}
Let $u\in \CW$ and taking into account that $1< q < p$, we can infer that
\begin{align*}
I\left(u\right)&\geq \dfrac{1}{p}\left[u\right]_{\alpha, p}^p+\dfrac{1}{q}\left[u\right]_{\beta, q}^q - \dfrac{\lambda}{p} C\left\|a\right\|_{L^{+\infty}\left(\R^N\right)}^{\frac{pt}{p_{\alpha}^*}}\left\|a\right\|_{L^{\left(\frac{q_{\beta}^*}{s}\right)^{\prime}}\left(\R^N\right)}^{p\left(1-t\right)}\left[u\right]_{\alpha, p}^{pt}\left[u\right]_{\beta, q}^{q\left(1 - t\right)}\\ &\geq
\dfrac{1}{p}\left[u\right]_{\alpha, p}^p - \dfrac{\lambda}{p} C\left\|a\right\|_{L^{+\infty}\left(\R^N\right)}^{\frac{pt}{p_{\alpha}^*}}\left\|a\right\|_{L^{\left(\frac{q_{\beta}^*}{s}\right)^{\prime}}\left(\R^N\right)}^{p\left(1-t\right)}\left[u\right]_{\alpha, p}^{pt}\left[u\right]_{\beta, q}^{q\left(1-t\right)}\\&
\dfrac{1}{p}\left[u\right]_{\alpha, p}^{pt}\left(\left[u\right]_{\alpha, p}^{p\left(1-t\right)} - \lambda C\left\|a\right\|_{L^{\infty}\left(\R^N\right)}^{\frac{pt}{p_{\alpha}^*}}\left\|a\right\|_{L^{\left(\frac{q_{\beta}^*}{s}\right)'}\left(\R^N\right)}^{p\left(1-t\right)}\left[u\right]_{\beta, q}^{q\left(1-t\right)}\right)
\end{align*}

Choosing $\left\|u\right\|_{\CW} = \mu$, by the definition $\CW = W^{\alpha, p}\left(\R^N\right)\bigcap W^{\beta, q}\left(\R^N\right)$, we have
\begin{align*}
\left\|u\right\|_{\CW}= \mu_1+ \mu_2.
\end{align*}
where $\left[u\right]_{\alpha, p}= \mu_1; \,\,\,\left[u\right]_{\beta, q}= \mu_2$.
Since $q<p$, this implies that $\left[u\right]_{\beta, q}^{q\left(1-t\right)}\leq \left[u\right]_{\beta, q}^{p\left(1-t\right)}$. Hence we have
\begin{align}\label{4.22}
I(u)\geq \dfrac{1}{p}\mu_1^{pt}\left(\mu_1^{p(1-t)} -\lambda C\left\|a\right\|_{L^{+\infty}\left(\R^N\right)}^{\frac{pt}{p_{\alpha}^*}}\left\|a\right\|_{L^{\left(\frac{q_{\beta}^*}{s}\right)'}\left(\R^N\right)}^{p\left(1-t\right)}\mu_2^{p(1-t)}\right).
\end{align}
For all $\varepsilon >0$, we set $\mu_2 = \varepsilon$ and $\mu_1 = \left(1+ \lambda C\left\|a\right\|_{L^{+\infty}\left(\R^N\right)}^{\frac{pt}{p_{\alpha}^*}}\left\|a\right\|_{L^{\left(\frac{q_{\beta}^*}{s}\right)'}\left(\R^N\right)}^{p\left(1-t\right)}\varepsilon^{p(1-t)}\right)^{\frac{1}{p(1-t)}}.$

This together with \eqref{4.22}  yields that
\begin{align*}
I(u)\geq \dfrac{1}{p}\left[1+ \lambda C\left\|a\right\|_{L^{+\infty}\left(\R^N\right)}^{\frac{pt}{p_{\alpha}^*}}\left\|a\right\|_{L^{\left(\frac{q_{\beta}^*}{s}\right)'}\left(\R^N\right)}^{p\left(1-t\right)}\varepsilon^{p(1-t)}\right]^{\frac{t}{1-t}}\geq \dfrac{1}{p^{\frac{t}{1-t}}}>0.
\end{align*}
By setting
 $\tau = 
\dfrac{1}{p^{\frac{t}{1-t}}}$ and $\left\|u\right\|_{\CW} 
= \dfrac{1}{p}\left[1+ \lambda C\left\|a\right\|
_{L^{\infty}\left(\R^N\right)}^{\frac{pt}{p_{\alpha}^*}}\left\|a\right\|
_{L^{\left(\frac{q_{\beta}^*}{s}\right)'}\left(\R^N
\right)}^{p\left(1-t\right)}\varepsilon^{q(1-t)}
\right]^{\frac{t}{1-t}}+ \varepsilon$,  we conclude that $I(u)\geq \tau>0$ for all $\left\|u
\right\|_{\CW} = \mu$.

$(2)$  Fix $\phi \in C_c^{\infty}\left(\R^N\right)$ such that $\phi \geq 0$ in $\R^N$ and $\phi\neq 0$. Since $1<q<p$, we may assume without loss of generality that $\phi$ is the normalized eigenfunction associated to the first eigenvalue $\lambda_1$ of the fractional $p-$Laplacian with weight $a$ satisfying $\displaystyle\int_{\R^N}\dfrac{a(x)}{2}\left|\varphi\right|^pdx = \dfrac{1}{p_1}$. We have that for all $t>0$,
\begin{align*}
I\left(t\phi\right)& = \dfrac{t^p}{p}\left[u\right]_{W^{\alpha, p}\left(\R^N\right)}^q + \dfrac{t^q}{q}\left[u\right]_{W^{\beta, q}\left(\R^N\right)}^q - \dfrac{\lambda t^p}{p}\displaystyle\int_{\R^N}a(x)\left|\phi\right|^pdx\\& = \dfrac{2 t^p}{p^2}\left(\lambda_1 - \lambda\right)+ \dfrac{t^q}{q}\left[\varphi\right]_{\beta, q}^q.
\end{align*}
Since $q<p$ and $\lambda_1<\lambda$, we deduce
\begin{align*}
\lim\limits_{t\to +\infty}I\left(t\phi\right) = \lim\limits_{t\to +\infty}t^p\left[\dfrac{2}{p^2}\left(\lambda_1 - \lambda\right) + \dfrac{t^{q-p}}{q}\left[\varphi\right]_{\beta, q}^q\right] = -\infty\,\,\,\text{in}\,\,\,\R^N.
\end{align*}

Thus we can choose $t_0>0$ and $u_0 =t_0\phi$  such that $I\left(u_0\right)<0$. This
fact shows that $(2)$ holds true.
\end{proof}
\begin{proof}[\bf Proof of Proposition \ref{proposition_4.2}]
By Lemma \ref{lem_4.4}, let
\begin{align*}
\Gamma = \left\{\gamma \in C\left(\left[0,1\right], \CW\right) \gamma(0) = 0, \gamma(1) = t_0\phi\right\}
\end{align*}
be the class of paths joining $0$ and $t_0\phi$. Setting
\begin{align*}
c: =  \inf\limits_{\gamma\in\Gamma}\max\limits_{\phi\in \gamma\left[0,1\right]}I\left(\phi(\gamma)\right).
\end{align*}
Since $I$ satisfies the Palais-Smale condition and Mountain pass geometry, by Lemma \ref{lem_4.3} and Lemma \ref{lem_4.4}, we can apply mountain pass theorem, Lemma \ref{lem_2.4}. Therefore, for any $\lambda> \lambda_1\left(\alpha, \beta, p, q\right)$, then $c$ is a critical value of $I$
associated to a critical point $u_0\in\CW$ . Namely, $I^{\prime}\left(u_0\right) = 0$ and $I\left(u_0\right) = c$. Since $c\geq \tau$, it follows from Lemma \ref{lem_4.4} that $u_0\neq 0$. The proof is now complete.
\end{proof}
Clearly, the proof of theorem \ref{theorem_1.3} is the direct consequence of the following propositions \ref{proposition_4.1}, \ref{proposition_4.2}.

\def\cprime{$'$} \def\polhk#1{\setbox0=\hbox{#1}{\ooalign{\hidewidth
  \lower1.5ex\hbox{`}\hidewidth\crcr\unhbox0}}}
  \def\cfac#1{\ifmmode\setbox7\hbox{$\accent"5E#1$}\else
  \setbox7\hbox{\accent"5E#1}\penalty 10000\relax\fi\raise 1\ht7
  \hbox{\lower1.15ex\hbox to 1\wd7{\hss\accent"13\hss}}\penalty 10000
  \hskip-1\wd7\penalty 10000\box7}

\begin{bibdiv}
\begin{biblist}

 \bib{RA}{article}{
 AUTHOR = {R. Aris},
     TITLE = {Mathematical modelling techniques, Res. Notes Math. 24, Pitman, Boston, 1979.
},
}

\bib{AT}{article}{
   AUTHOR = {V. Ambrosio},
   AUTHOR ={T.Isernia},
     TITLE = {On a Fractional   $p \& q$  Laplacian Problem with Critical Sobolev–Hardy Exponents},
    JOURNAL = { Mediterr. J. Math.}
  FJOURNAL = {Mediterranean Journal of Mathematics},
    VOLUME = {15},
      YEAR = {2018},
    NUMBER = {219},
     PAGES = {493--516},
       
}
\bib{AAI}{article}{
   AUTHOR = {C.O. Alves},
   AUTHOR ={ V. Ambrosio},
   AUTHOR ={ T. Isernia},
     TITLE = {Existence, multiplicity and concentration for
a class of fractional $p\&q$ Laplacian problems in $\R^
N$},
  JOURNAL = {Commun. Pure Appl. Anal.},
  FJOURNAL = {Communications on Pure and Applied Analysis.},
    VOLUME = {18},
      YEAR = {2019},
    NUMBER = {4},
     PAGES = { 2009–2045},
       
}
\bib{AM}{article}{
   AUTHOR = {E. Abreu},
   AUTHOR ={ A. H. S. Medeiros}
     TITLE = {Local behaviour and existence of solutions
of the fractional $(p, q)-$Laplacian.},
  JOURNAL = {arXiv:1812.01466v1 [math.AP].},
  FJOURNAL = {arXiv:1812.01466v1 [math.AP].},

}
\bib{AR}{article}{
   AUTHOR = {A. Ambrosetti},
   AUTHOR ={P. Rabinowitz}
     TITLE = {Dual variational methods in critical point theory and application},
  JOURNAL = {J. Funct. Anal.},
  FJOURNAL = {J. Funct. Anal.},
    VOLUME = {14},
      YEAR = {1973},
     PAGES = {  349-381},
     }

\bib{BAFP}{article}{
   AUTHOR = { V. Benci},
   AUTHOR ={P. d'Avenia},
   AUTHOR ={D. Fortunato},
   AUTHOR ={L. Pisani},   
  TITLE = {Solitons in several space dimensions: Derrick’s problem and infinitely many solutions},
  JOURNAL = {Arch. Rational Mech. Anal.},
  FJOURNAL = {J. Funct. Anal.},
    VOLUME = {154(4)},
      YEAR = {(2000)},
     PAGES = {297–324},
     }

\bib{BL}{article}{
 AUTHOR = {L. Brasco} 
 AUTHOR = {E. Lindgren},
     TITLE = {The fractional cheeger problem},
    JOURNAL = {Interfaces Free Bound},
  FJOURNAL = {Interfaces Free Bound},
    VOLUME = {16},
      YEAR = {2014},
     PAGES = { 419–45},
}

\bib{BZ1}{article}{
   AUTHOR = {N. Benouhiba},
   AUTHOR = {B. Zahia},
     TITLE = {A class of eigenvalue problems for the $(p, q)-$Laplacian in $\R^N$},
  JOURNAL = {Internat. J. Pure Appl. Math.},
  FJOURNAL = {Internat. J. Pure Appl. Math .},
    VOLUME = {80},
      YEAR = {2012},
    NUMBER = {5},
     PAGES = {727-737},
       
}
\bib{BMV}{article}{
 AUTHOR = { V. Benci},
 AUTHOR = {A. M. Micheletti},
 AUTHOR = {D. Visetti},
     TITLE = {An eigenvalue problem for a quasilinear
elliptic field equation.},
    JOURNAL = {J. Differential Equations.}
  FJOURNAL = {J. Differential Equations},
    VOLUME = {184},
      YEAR = {2002},
    NUMBER = {2},
     PAGES = {299–320},
}
\bib{BZ}{article}{
   AUTHOR = {N. Benouhiba},
   AUTHOR = {B. Zahia},
     TITLE = {On the solutions of the $(p, q)-$Laplacian problem at resonance},
  JOURNAL = {Nonlinear Anal.},
  FJOURNAL = {Nonlinear Analysis .},
    VOLUME = {77},
      YEAR = {2013},
    NUMBER = {1},
     PAGES = {74–81},
       
}



   
\bib{BM}{article}{
   AUTHOR = {M. Bhakta}
   AUTHOR ={D. Mukherjee},
    TITLE = {Multiplicity results for $(p, q)$ fractional elliptic equations involving critical nonlinearities},
  JOURNAL = {Adv. Differential Equations. },
 FJOURNAL = {Advances in Differential Equations},
    VOLUME = {24},
     YEAR = {2019},
    NUMBER = {3-4},
     PAGES = {185–228},
   }


\bib{BCM}{article}{
 AUTHOR = {P. Baroni},
 AUTHOR = {M. Colombo},
 AUTHOR = {G. Mingione}

     TITLE = { Regularity for general functionals with double phase},
    JOURNAL = {Calc. Var. Partial Differential Equations.}
  FJOURNAL = {Calc. Var. Partial Differential Equations},
    VOLUME = {57},
      YEAR = {2018},
     PAGES = {57:62},
}
\bib{BCM1}{article}{
 AUTHOR = {P. Baroni},
 AUTHOR = {M. Colombo},
 AUTHOR = {G. Mingione}

     TITLE = {Harnack inequalities for double phase
functionals},
    JOURNAL = {Nonlinear Anal.}
  FJOURNAL = {Nonlinear Anal.},
    VOLUME = {121},
      YEAR = {2015},
     PAGES = {206-222},
}

\bib{bt}{article}{
 AUTHOR = {V. Bobkov},
 AUTHOR = {M. Tanaka},
     TITLE = {On positive solutions for $(p,q)$-Laplace equations with two parameters},
   JOURNAL = {Calc. Var. Partial Differential Equations.},
  FJOURNAL = {Calculus of Variations and Partial Differential Equations},
    VOLUME = {54},
      YEAR = {2015},
    NUMBER = {3},
     PAGES = {3277-3301},
} 
\bib{CI}{article}{
 AUTHOR = {  L. Cherfils},
 AUTHOR = { Y. Il’yasov}

     TITLE = {On the stationary solutions of generalized reaction diffusion equations with $p\&q-$Laplacian},
    JOURNAL = {Comm. Pure Appl. Anal.}
  FJOURNAL = {Comm. Pure Appl. Anal.},
    VOLUME = {4},
      YEAR = {2005},
     PAGES = {9-22},
}

\bib{CP}{article}{
 AUTHOR = {M. Caponi},
 AUTHOR = {P. Pucci}
     TITLE = {Existence theorems for entire solutions of stationary Kirchhoff fractional $p-$Laplacian equations},
    JOURNAL = { Ann. Mat. Pura Appl.}
  FJOURNAL = { Ann. Mat. Pura Appl.},
    VOLUME = {195},
      YEAR = {2016},
    NUMBER = {6},
     PAGES = { 2099-2129},
}

\bib{CG}{article}{
 AUTHOR = {C. He },
 AUTHOR = {G. Le}
     TITLE = {The regularity of weak solutions to nonlinear scalar field elliptic
equations containing $p-q-$Laplacians.},
    JOURNAL = {Ann. Acad. Sci. Fenn. Math.}
  FJOURNAL = {Ann. Acad. Sci. Fenn. Math.},
    VOLUME = {33},
      YEAR = {2008},
    NUMBER = {2},
     PAGES = {337-371},
}

\bib{cs}{article}{
 AUTHOR = {F. Colasuonno} 
 AUTHOR = {M. Squassina},
     TITLE = {Eigenvalues for double phase variational integrals},
    JOURNAL = {Ann. Mat. Pura Appl.},
  FJOURNAL = {Annali di Matematica Pura ed Applicata (1923)},
    VOLUME = {195},
      YEAR = {2016},
    NUMBER = {6},
     PAGES = {1917–1959},
}

\bib{cp}{article}{
 AUTHOR = {M. Caponi} 
 AUTHOR = {P. Pucci},
     TITLE = {Existence theorems for entire solutions of stationary Kirchhoff fractional $p-$Laplacian equations},
    JOURNAL = {Ann. Mat. Pura Appl.},
  FJOURNAL = {Annali di Matematica Pura ed Applicata (1923)},
    VOLUME = {195},
      YEAR = {2016},
    NUMBER = {6},
     PAGES = {2099–2129},
}

\bib{PCF}{article}{
 AUTHOR = { P.C. Fife},
     TITLE = {Mathematical aspects of reacting and diffusing systems},
    JOURNAL = { Lect. Notes in
Biomath. 28, Springer, Berlin, 1979.}
  FJOURNAL = {Journal of Mathematical Analysis and Applications},
}


\bib{GMF}{article}{
 AUTHOR = {G. M. Figueiredo},
     TITLE = {Existence of positive solutions for a class of $(p, q)-$elliptic problems with
critical growth on $\R^N$},
    JOURNAL = {J. Math. Anal. Appl.}
  FJOURNAL = {Journal of Mathematical Analysis and Applications},
    VOLUME = {378},
      YEAR = {2011},
    NUMBER = {2},
     PAGES = { 507-518},
}
\bib{FMM}{article}{
 AUTHOR = {L. Faria},
 AUTHOR = {O. Miyagaki},
 AUTHOR = {D. Motreanu}
     TITLE = {Comparison and positive solutions for problems with $(P,Q)-$Laplacian
and convection term},
    JOURNAL =  {Proc. Edinb. Math. Soc.}
  FJOURNAL = { Proceedings of the Edinburgh Mathematical Society},
    VOLUME = {57},
      YEAR = {2014},
    NUMBER = {2},
     PAGES = {687–698},
}

\bib{GKS}{article}{
 AUTHOR = { D. Goel} 
 AUTHOR = {D. Kumar},
 AUTHOR = {K. Sreenadh},
     TITLE = {Regularity and multiplicity results
for fractional $(p, q)-$Laplacian equations},
    JOURNAL = {Commun. Contemp. Math.},
  FJOURNAL = {Communications in Contemporary Mathematics},
      YEAR = {2019},
    PAGES = {https://doi.org/10.1142/S0219199719500652.},
}

\bib{KTT}{article}{
 AUTHOR = {R. Kajikiya} 
 AUTHOR = {M. Tanaka},
 AUTHOR = {S. Tanaka}
     TITLE = {Bifurcation of positive solutions for the one dimensional $(p, q)-$Laplace equation},
    JOURNAL = {Elec. Jour. Diff. Equa..}
  FJOURNAL = {Electronic Journal of Differential Equations},
    VOLUME = {2017},
      YEAR = {2017},
    NUMBER = {107},
     PAGES = {1-37},
}
\bib{PLL}{article}{

AUTHOR = {P. L. Lions},
     TITLE = { The concentration-compactness principle in the calculus of variations. The limit case. Part II.},
   JOURNAL = {Rev.Mat. Iberoamericana},
  FJOURNAL = {Rev.Mat. Iberoamericana},
    VOLUME = {1},
     YEAR = {1985},
   NUMBER = {1},
    PAGES = {45–121},
     
}

\bib{LL}{article}{
 AUTHOR = { G. B. Li},
 AUTHOR = { X. Liang}

     TITLE = { The existence of nontrivial solutions to nonlinear elliptic equation of $p-q$ Laplacian type on  $\R^N$},
    JOURNAL = {Nonlinear Anal.}
  FJOURNAL = {Nonlinear Analysis},
    VOLUME = {71},
      YEAR = {2009},
    NUMBER = {5-6},
     PAGES = {2316-2334},
}

\bib{MT}{article}{
 AUTHOR = {D. Motreanu},
 AUTHOR = {M. Tanaka},
 TITLE = {On a positive solution for $\left(p, q\right)-$laplace equation with indeﬁnite weight},
   JOURNAL = { Minimax Theory Appl.},
  FJOURNAL = { Minimax Theory and its Applications.},
    VOLUME = {1},
    YEAR = {2016},
   NUMBER = {1},
   PAGES = {1-20}
} 

\bib{AP}{article}{
 AUTHOR = {S. A. Marano},
 AUTHOR = {N. S. Papageorgiou}
     TITLE = {Constant-sign and nodal solutions of coercive $(p, q)-$Laplacian problems},
    JOURNAL = { Nonlinear. Anal.},
  FJOURNAL = {Nonlinear Analysis.},
   VOLUME = {77},
      YEAR = {2013},
     PAGES = {118-129},
}

\bib{ATG}{book}{
   AUTHOR = {J. Mawhin},
   AUTHOR ={M. Willem},
     TITLE = {: Critical Point Theory and Hamiltonian systems },
   JOURNAL = {Springer, New York},
      YEAR = {1989},

}
\bib{MP}{article}{
 AUTHOR = {S. A. Marano},
 AUTHOR = { N. S. Papageorgiou}
     TITLE = {Constant-sign and nodal solutions of coercive $(p,q)-$Laplacian problems},
    JOURNAL =  { Nonlinear Anal.}
  FJOURNAL = {  Nonlinear Anal.},
    VOLUME = {77},
      YEAR = {2013},
    PAGES = {118-129},
}

\bib{EG}{article}{
   AUTHOR = {E. D. Nezza} 
   AUTHOR = { G. Palatucci}
   AUTHOR = {E. Valdinoci},
     TITLE = {Hitchhiker's guide to the fractional {S}obolev spaces},
   JOURNAL = {Bull. Sci. Math.},
  FJOURNAL = {Bulletin des Sciences Math\'{e}matiques},
    VOLUME = {136},
      YEAR = {2012},
    NUMBER = {5},
     PAGES = {521--573},
      ISSN = {0007-4497},
   MRCLASS = {46E35 (35A23 35S05 35S30)},
  MRNUMBER = {2944369},
MRREVIEWER = {Lanzhe Liu},
  }
\bib{PQ}{article}{
 AUTHOR = {L. M. Del Pezzo} ,
 AUTHOR = {A. Quaas}
     TITLE = {Global bifurcation for fractional p-Laplacian and an application},
    JOURNAL = {Z. Anal. Anwend.},
  FJOURNAL = {Z. Anal. Anwend.},
    VOLUME = {35},
      YEAR = {2016},
    NUMBER = {6},
     PAGES = {411–447},
}

\bib{PQ1}{article}{

AUTHOR = { L.M. Del Pezzo},
AUTHOR = {A. Quaas},
     TITLE = {Spectrum of the fractional p-Laplacian in $\R^N$ and decay estimate for positive
solutions of a Schr$\ddot{o}$dinger equation},
   JOURNAL = {Nonlinear Anal. 193 (2020), 111479.},
  FJOURNAL = {Journal of Differential Equations },
     
}

\bib{PW}{article}{
 AUTHOR = {M. Petru} 
 AUTHOR = {S. Winfried},
     TITLE = {A Sobolev non embedding},
    JOURNAL = {Atti Accad. Naz. Lincei Cl. Sci. Fis. Mat. Natur.},
  FJOURNAL = { Atti della Accademia Nazionale dei Lincei, Classe di Scienze Fisiche, Matematiche e Naturali, Rendiconti Lincei Matematica E Applicazioni },
    VOLUME = {26},
      YEAR = {2015},
    NUMBER = {3},
     PAGES = {291–298},
}

\bib{PRR}{article}{
 AUTHOR = {N. S. Papageorgiou},
 AUTHOR = {V.D. Radulescu},
 AUTHOR = {D.D. Repov} 
     TITLE = {Positive solutions for nonlinear Neumann problems with singular terms and convection},
    JOURNAL = {J. Math. Pures Appl.},
  FJOURNAL = {Journal de Mathématiques Pures et Appliquées.},
    VOLUME = {136},
      YEAR = {2020},
     PAGES = {1-21},
}
\bib{PW1}{article}{
 AUTHOR = {N. S. Papageorgiou},
 AUTHOR = {V.D. R$\breve{o}$dulescu},
 AUTHOR = {D.D. Repov$\check{s}$ } 
     TITLE = {Ground state and nodal solutions for a class of double phase problems},
    JOURNAL = {Z. Angew. Math. Phys.},
  FJOURNAL = {Zeitschrift für angewandte Mathematik und Physik.},
    VOLUME = {71},
      YEAR = {2020},
    NUMBER = {2},
     PAGES = {Paper No. 15},
}

\bib{S}{article}{
 AUTHOR = {N. E. Sidiropoulos}
     TITLE = {Existence of solutions to indefinite quasilinear elliptic problems of $P-Q-$Laplacian type},
    JOURNAL =  { Electron. J. Differential Equations.}
  FJOURNAL = {  Electron. J. Differential Equations.},
    VOLUME = {2010},
      YEAR = {2010},
    NUMBER = {162},
    PAGES = {1-23},
}

\bib{SV}{article}{
 AUTHOR = {R. Servadei} 
 AUTHOR = {E. Valdinoci},
     TITLE = {Mountain Pass solutions for non-local elliptic operators},
    JOURNAL = {J. Math. Anal. Appl.},
  FJOURNAL = {Journal of Mathematical Analysis and
Applications},
    VOLUME = {389},
      YEAR = {2012},
    NUMBER = {2},
     PAGES = {887-898},
}

\bib{Tanaka1}{article}{
 AUTHOR = {M. Tanaka}
     TITLE = {Uniqueness of a positive solution and existence of a sign-changing solution for $(p,q)$-Laplace equation},
    JOURNAL = { J. Nonlinear Funct. Anal},
  FJOURNAL = {Journal of Nonlinear Functional Analysis},
    VOLUME = {2014},
      YEAR = {2014},
     PAGES = { 1–15},
}
\bib{Tanaka2}{article}{
 AUTHOR = {M. Tanaka}
     TITLE = {Generalized eigenvalue problems for $(p, q)-$ Laplacian with indefinite
weight},
    JOURNAL = { J. Math. Anal. Appl.},
  FJOURNAL = {Journal of Mathematical Analysis and Applications},
    VOLUME = {49},
      YEAR = {2014},
     PAGES = { 1181–1192},
}


\bib{HW}{article}{
 AUTHOR = { H. Wilhelmsson},
     TITLE = {Explosive instabilities of reaction-diffusion equations},
    JOURNAL =  {Phys. Rev. A}
  FJOURNAL = { Phys. Rev. A},
    VOLUME = {36},
      YEAR = {1987},
     PAGES = {965–966},
}
\bib{MW}{article}{
 AUTHOR = { M. Willem},
     TITLE = {Minimax Theorems},
    JOURNAL =  {Birkh$\ddot{a}$user}
      YEAR = {1996},
}

\bib{YY}{article}{
 AUTHOR = { Z. Yang},
 AUTHOR = {  H. Yin}

     TITLE = {A class of $(p, q)-$Laplacian type equation with concave-convex nonlinearities
in bounded domain},
    JOURNAL = {J. Math. Anal. Appl.}
  FJOURNAL = {Journal of Mathematical Analysis and Applications},
    VOLUME = {382},
      YEAR = {2011},
    NUMBER = {2},
     PAGES = {843–855},
}

\end{biblist}
\end{bibdiv}

\end{document}